\date{\today}
\theoremstyle{plain}  
\newtheorem{theorem}{Theorem}[section]
\newtheorem*{theorem*}{Theorem}
\newtheorem{corollary}[theorem]{Corollary}
\newtheorem{lemma}[theorem]{Lemma}
\newtheorem{proposition}[theorem]{Proposition}
\theoremstyle{definition}
\newtheorem{definition}[theorem]{Definition}
\theoremstyle{remark}
\newtheorem{remark}[theorem]{Remark}
\newtheorem*{remark*}{Remark}
\newtheorem*{claim*}{Claim}
\newcommand{\llambda}{\Gamma}
\newcommand{\ambda}{\gamma}
\newcommand{\forevery}{\;\text{for}\;\text{every}\;}
\newcommand{\andd}{\quad\text{and}\quad}
\newcommand{\suhthat}{\;:\;}
\renewcommand{\le}{\leqslant}
\renewcommand{\ge}{\geqslant}
\newcommand{\R}{\mathbb{R}}
\newcommand{\Z}{\mathbb{Z}}
\newcommand{\C}{\mathbb{C}}
\newcommand{\cC}{\mathcal{C}}
\newcommand{\lie}{\mathfrak}
\newcommand{\GL}{\mathrm{GL}}
\newcommand{\SL}{\mathrm{SL}}
\newcommand{\SO}{\mathrm{SO}}
\newcommand{\ort}{\mathrm{O}}
\newcommand{\Sp}{\mathrm{Sp}}
\newcommand{\Spin}{\mathrm{Spin}}
\newcommand{\Pin}{\mathrm{Pin}}
\newcommand{\Cl}{\mathrm{Cl}}
\DeclareMathOperator{\Ad}{Ad}
\DeclareMathOperator{\Hom}{Hom}
\DeclareMathOperator{\Aut}{Aut}
\DeclareMathOperator{\Int}{Int}
\DeclareMathOperator{\Out}{Out}
\DeclareMathOperator{\Nmd}{Nm_{\Delta}}
\DeclareMathOperator{\gal}{Gal}
\renewcommand{\phi}{\varphi}
\newcommand{\gl}{\mathfrak{gl}}
\renewcommand{\phi}{\varphi}
\newcommand{\x}{\mathfrak{X}}
\newcommand{\oo}{\mathcal{O}}
\newcommand{\autg}{\Aut(G)}
\newcommand{\homm}[2]{\Hom\left(#1,#2\right)}
\newcommand{\homgh}{\homm{\llambda}{\autg}}
\newcommand{\gt}{G^{\theta}}
\newcommand{\gs}{G_{\theta}}
\newcommand{\tg}{\theta_{\ambda}}
\newcommand{\zg}{z_{\ambda}}
\newcommand{\cct}{c_{\theta}}
\newcommand{\ct}{c^{\theta}}
\newcommand{\qt}{q_{\theta}}
\newcommand{\qqt}{\qt}
\newcommand{\hg}{h_{\ambda}}
\newcommand{\oh}{\overline{h}}
\newcommand{\ol}{\lambda\Delta}
\newcommand{\od}{\delta f(\llambda/\Delta)}
\newcommand{\ohg}{\overline{h}_{\ambda}}
\newcommand{\fg}{f_{\ambda}}
\newcommand{\taug}{\tau_{\gamma}}
\newcommand{\sg}{s_{\ambda}}
\newcommand{\taut}{\tau^{\theta}}
\newcommand{\ctt}{\tilde c_{\theta}}
\newcommand{\gamt}{\Gamma_{\theta}}
\newcommand{\gamtt}{\widehat{\Gamma}_{\theta}}
\newcommand{\zk}{\lie z_{\lie k}}
\DeclareMathOperator{\Fun}{Fun}
\newcommand{\fun}[2]{\Fun(#1,#2)}
\newcommand{\ga}{G_{\alpha}}
\newcommand{\cent}{Z_{\Gamma}(\gam)}
\newcommand{\fr}{\mathfrak B^{\C}}
\newcommand{\wf}{\widetilde{f}}
\newcommand{\glt}{\GL(n,\C)^{\theta}}
\newcommand{\gls}{\GL(n,\C)_{\theta}}
\newcommand{\gltt}{\GL(2n,\C)^{\theta}}
\newcommand{\spt}{\Sp(2n,\C)^{\theta}}
\newcommand{\sps}{\Sp(2n,\C)_{\theta}}
\newcommand{\mdl}{M}
\newcommand{\md}{M}
\newcommand{\wcM}{\widetilde{\mdl}}
\newcommand{\hhom}{\mathcal{H}om}
\newcommand{\isoc}{H^1}
\newcommand{\pa}{p_{\alpha}}
\newcommand{\xa}{X_{\alpha}}
\newcommand{\xaa}{X_{\alpha}}
\newcommand{\gam}{\Gamma_{\alpha}}
\newcommand{\xg}{X_{\llambda}}
\newcommand{\xd}{X_{\Delta}}
\newcommand{\pg}{p_{\llambda}}
\newcommand{\pd}{p_{\Delta}}
\newcommand{\mtau}{M^{\gamma}}
\newcommand{\mtauu}{M^{\gamma'}}
\newcommand{\mttauu}{M^{\gamma\gamma'}}
\newcommand{\wchi}{\widetilde{\chi}}
\newcommand{\pair}[2]{\langle#1,#2\rangle}
\newcommand{\ug}{\underline G}
\newcommand{\sett}{\Gamma_{\theta}}
\newcommand{\setp}{\Gamma_{\theta}^Z}
\title{A Prym--Narasimhan--Ramanan construction of principal bundle fixed points}
\author{Guillermo Barajas and Oscar Garc\'ia-Prada}
\subjclass[2010]{Primary 14H60; Secondary 53C07, 58D19}
\keywords{Principal bundle, moduli, finite group action, twisted
  equivariant bundle, covering}
\thanks{The first author received the support of a fellowship from ”la Caixa” Foundation (ID 100010434 and code  LCF/BQ/DI19/11730022). The second author was partially supported by the Spanish MICINN under
the ICMAT Severo Ochoa grant CEX2019-000904-S, and grants PID2019-109339GB-C31, and PID2022-141387NB-C21.}
\begin{document}

\maketitle

\centerline{\it Dedicated to the memory of M.S. Narasimhan (1932--2021). }

\begin{abstract}
  Let $X$ be a compact Riemann surface and $G$ be a connected reductive complex Lie group with centre  $Z$. Consider the moduli space $\mdl(X,G)$ of polystable principal holomorphic $G$-bundles on $X$. There is an action of the group $H^1(X,Z)$
 of isomorphism classes of $Z$-bundles over $X$ on $\mdl(X,G)$ induced by the multiplication
$Z\times G\to G.$
Let $\llambda$ be a finite subgroup of $H^1(X,Z)$.
In this paper we give  a Prym--Narasimhan--Ramanan-type
description of the fixed points of $\mdl(X,G)$ under the action of $\llambda$. A main ingredient in this construction is the theory  of twisted equivariant bundles on an étale cover of $X$ developed in \cite{BGGM}.
\end{abstract}

\section{Introduction}

Let $X$ be a compact Riemann surface, and $L$ a line bundle of finite order $r$ over $X$. In 1973 Narasimhan and Ramanan \cite{narasimhan-ramanan} gave a description of the stable points of the moduli space of vector bundles of rank $n$ which are fixed under tensorization by $L$. These only exist if $r$ divides $n$, and they showed that they can be obtained as  pushforwards of stable vector bundles of rank $n/r$, satisfying some additional conditions, over an étale cover $X_L$ of degree $r$ over $X$ determined by $L$. Nasser \cite{nasser} extended this result, showing that the polystable fixed points are precisely the pushforwards of polystable vector bundles of rank $n/r$ over $X_L$. In particular, if $n=r$ then the variety of fixed points with a given determinant is isomorphic to the quotient of the generalised Prym variety of the projection $X_L\to X$ by the pullback action of the Galois group $\gal(X_L/X)$. For $n=r=2$, this is the abelian subvariety of the Jacobian $J(X_L)$ defined as the set of line bundles $\xi$ over $X_L$ such that
\begin{equation}\label{eq-def-prym}
    \zeta^*\xi\cong \xi^{-1},
\end{equation}
where $\zeta$ is a generator of $\gal(X_L/X)$, which is the original Prym variety introduced by Mumford \cite{mumford-prym}.

Let $G$ be a connected reductive complex Lie group with centre $Z$. The goal of this paper is to describe the smooth points in the moduli space $M(X,G)$ of principal holomorphic $G$-bundles over $X$ which are fixed under tensorization by all the elements of a finite subgroup $\llambda$ of $H^1(X,Z)$. Here $H^1(X,Z)$ denotes the set of isomorphism classes of $Z$-bundles over $X$. The ``tensorization'' $E\otimes L$ of a $G$-bundle $E$ and a $Z$-bundle $L$ over $X$ is the $G$-bundle obtained by extension of structure group of the $G\times Z$-bundle $E\times_XL$ via the multiplication homomorphism $G\times Z\to G$.

Our fixed-point description, which we call \textbf{Prym--Narasimhan--Ramanan} construction, consists of two steps. The first step is to establish a correspondence between fixed points and principal bundles over $X$ with smaller structure group. More precisely, let $\llambda$ be a finite subgroup of  $H^1(X,Z)$. We denote by $\Int(G)$ the group of inner automorphisms of $G$. Given a homomorphism $\theta:\llambda\to\Int(G)$, we denote by $\gt\le G$ the subgroup of fixed points in $G$ under the corresponding $\llambda$-action, and by $\gs$ the subgroup of elements which are fixed up to elements of $Z$ --- see Section \ref{section-Gtheta}. An important ingredient is the natural homomorphism $\cct:\gs\to \Hom(\llambda,Z)$, which induces a map $\ctt:H^1(X,\underline{\gs})\to H^1(X,\Hom(\llambda,Z))$ from the set of isomorphism classes of $\gs$-bundles to the set of isomorphism classes of $\Hom(\llambda,Z)$-bundles. Let $M_{\llambda}(X,\gs)$ be the moduli space of $\gs$-bundles $E$ such that $\ctt(E)\cong\llambda$. Here $\llambda$
is identified with its corresponding embedding in $\Hom(\llambda,H^1(X,Z))$ by abuse of notation, which may be regarded as a $\Hom(\Gamma,Z)$-bundle via the isomorphism  $\Hom(\llambda,H^1(X,Z))\cong H^1(X,\Hom(\llambda,Z))$. We denote by $\wcM_{\llambda}(X,\gs)\subset M(X,G)$ the image of $M_{\llambda}(X,\gs)$ obtained by extension of structure group. 

\vspace{5 pt}
{\bf Theorem A} (Theorem \ref{th-fixed-points-oscar-ramanan}). {\it Let $\mdl_*(X,G)$ be the smooth locus of $M(X,G)$. The inclusions
    $$\bigcup_{[\theta]\in \x(\llambda,\Int(G))}\widetilde{\mdl}_{\llambda}(X,G_{\theta})\subset\mdl(X,G)^{\llambda}$$
and    
    $$\mdl_*(X,G)^{\llambda}\subset\bigcup_{[\beta]\in \x(\llambda,\Int(G))}\widetilde{\mdl}_{\llambda}(X,G_{\theta})$$
hold.
Here $[\theta]$ runs over the equivalence classes in $\x(\llambda,\Int(G)):=\Hom(\llambda,\Int(G))/\sim$, where $\theta\sim\theta'$ if $\theta'=\Int_g\theta\Int_g^{-1}$ for some $\Int_g\in \Int(G)$.
Moreover, the intersections 
$$\mdl_*(X,G)\cap\widetilde{\mdl}_{\llambda}(X,G_{\theta})=\mdl_*(X,G)^{\llambda}\cap\widetilde{\mdl}_{\llambda}(X,G_{\theta})$$
are disjoint for different $[\theta]\in \x(\llambda,\Int(G))$.}
\vspace{5 pt}

Theorem A is a generalisation of  results in \cite{PR}, where the authors consider finite cyclic group actions on the moduli space of $G$-Higgs bundles. In general, the second inclusion of sets is false if we replace $\mdl_*(X,G)^{\llambda}$ with the whole fixed point locus $\mdl(X,G)^{\llambda}$, since the simplicity of the points of $\mdl_*(X,G)$ is crucial in the proof, and strictly polystable $G$-Higgs bundles are never simple. For example, if $G=\GL(4,\C)$, $\Gamma\cong\Z/2\Z$ and the generator of $\Gamma$ acts by dualization, $\mdl(X,G)^{\llambda}$ contains the image of the moduli space of $\SO(2,\C)\times\Sp(2,\C)$-Higgs bundles by extension of structure group, which does not correspond to any of the $\widetilde{\mdl}_{\llambda}(X,G_{\theta})$ appearing in Theorem A.

The situation considered by Narasimhan--Ramanan \cite{narasimhan-ramanan} features $G=\GL(n,\C)$, and hence $Z\cong\C^*$. There is a natural one-to-one correspondence beween $\GL(n,\C)$-bundles and vector bundles of rank $n$, which restricts to a correspondence between $\C^*$-bundles and line bundles. Let $\llambda\le J(X)$ be a subgroup generated by a line bundle $L$ of finite order $r$. It can be seen that there is only one class $[\theta]\in\Hom(\llambda,\Int(\GL(n,\C)))/\Int(G)$ in the decomposition of the fixed point locus of Theorem A.  It is represented by the homomorphism $\llambda\to\Int(\GL(n,\C))$ such that the image of $L$ is conjugation by the diagonal matrix $D$ which contains every $r$-th root of unity as an eigenvalue with multiplicity $m:=n/r$. In particular, $r$ must divide $n$. In this setting, $\GL(n,\C)^{\theta}\cong\GL(m,\C)^{\times r}$ and $\GL(n,\C)_{\theta}$ is isomorphic to the semidirect product $\GL(n,\C)^{\theta}\rtimes_{\tau}(\Z/r\Z)$, where the action $\tau$ of $\Z/r\Z$ on $\GL(m,\C)^{\times r}$ permutes the different copies of $\GL(m,\C)$. 

The smooth locus of the moduli space $M(X,\GL(n,\C))$ coincides with the subvariety of stable vector bundles of rank $n$. By Theorem A a stable vector bundle $E$ is isomorphic to $E\otimes L$ if and only if the corresponding $\GL(n,\C)$-bundle has a reduction of structure group to $\GL(m,\C)^{\times r}\rtimes_{\tau}(\Z/r\Z)$. This is not obvious from \cite{narasimhan-ramanan}.

The second step of the Prym--Narasimhan--Ramanan construction --- which is not pursued in \cite{PR} --- is to establish an isomorphism between each of the components $\widetilde{\mdl}_{\llambda}(X,G_{\theta})$ appearing in Theorem A and certain moduli spaces of twisted equivariant bundles over certain étale covers of $X$. The correspondence is based on the fact that, as long as $\llambda$ is non-trivial, each of the moduli spaces $\mdl_{\Gamma}(X,\gs)$ appearing in Theorem A is empty unless $\gs$ is a non-trivial extension of $\gt$. In this case $\gs$ is actually a non-trivial finite extension, since $\gs/\gt$ is embedded in $\Hom(\llambda,Z)$, which is finite, and so in particular $\gs$ is non-connected. More precisely, let $\gt_0$ be the connected component of the identity of $\gt$ --- or, equivalently, of $\gs$ ---, and let $\gamtt:=\gs/\gt_0$. Let $a:\gs\to\Out(\gt_0)$ be the characteristic homomorphism of the extension $\gs$ of $\gt_0$ and let $\tau:\gs\to\Aut(\gt_0)$ be a homomorphism lifting it --- this exists because $\gt_0$ is reductive, see for example \cite{BGGM}. Then there exists a 2-cocycle $c\in Z^2_{a}(\gamt,Z(\gt_0))$ in the sense of group cohomology, where $Z(\gt_0)$ is the centre of $\gt_0$, such that $\gs$ is isomorphic to the twisted product $\gt_0\times_{(\tau,c)}\gamtt$ --- see Proposition \ref{prop-extensions-isomorphic-twisted-group}. This is equal to $\gt_0\times\gamtt$ as a set, and its group multiplication is a twisted version of the semidirect product multiplication on $\gt_0\rtimes_{\tau}\gamtt$ --- see Definition \ref{def-twisted-product}.

Let $\alpha\to X$ be a $\gamtt$-bundle over $X$ and take one of its connected components $\xa$. This may be regarded as a connected étale cover $\xa\to X$. Let $\gam\le\gamtt$ be its Galois group. Given a $\gt_0$-bundle $E$ over $\xa$, a $(\tau,c)$-twisted $\gam$-action on $E$ is a set of holomorphic automorphisms $\{E\xrightarrow{\cdot\gamma}E;\, e\mapsto e\cdot\gamma\}_{\gamma\in\gam}$ of $E$ such that the automorphism corresponding to $\ambda$ twists the bundle $\gt_0$-action by $\taug^{-1}$, and $(e\cdot\gamma)\cdot\gamma'$ and $e\cdot(\gamma\gamma')$ differ by $c(\gamma,\gamma')$.
A \textbf{$(\tau,c)$-twisted $\gamtt$-equivariant} structure on $E$ is a $(\tau,c)$-twisted $\gam$-action compatible with the natural $\gam$-action on $\xa$ --- see Definition \ref{def-twisted-equivariant-bundles}. There is a moduli space $\mdl(\xaa,\tau,c, \gt_0,\gam)$ classifying isomorphism classes of polystable $(\tau,c)$-twisted $\gamtt$-equivariant $\gt_0$-bundles over $\xa$. This fibers over $M(\xa,\gt_0)$ via the morphism that forgets the twisted equivariant structure.

One can verify that $\gt_0$-bundles over $\xa$ admitting a $(\tau,c)$-twisted $\gamtt$-equivariant structure are fixed points of the action of $\gam$ on $M(X,\gt_0)$ which for $\gamma\in\gam$ sends $E$ to $\gamma^{*-1}\taug(E)$. In fact, every such fixed point in the smooth locus $M_*(X,\gt_0)$ admits a $(\tau,c)$-twisted $\gamtt$-equivariant structure for some 2-cocycle $c$. For example, in the case $G=\GL(2,\C)$ and $\theta$ any inner involution, the group $\gt=\gt_0$ is isomorphic to $\C^*$, $\gamtt=\gamt\cong\Z/2\Z$ and $\gs\cong \gt\rtimes_{\tau}\Z/2\Z$, where $\tau$ denotes the $\Z/2\Z$-action on $\C^*$ such that the generator $\zeta\in\Z/2\Z$ sends $\mu$ to $\mu^{-1}$. Given a $\Z/2\Z$-bundle $\alpha$ over $X$ with corresponding étale cover $\xa\to X$ of degree two, a line bundle $\xi$ over $\xa$ admits a $(\tau,1)$-twisted $\Z/2\Z$-equivariant structure --- $1$ being the trivial 2-cocycle --- if and only if $\xi^{-1}\cong \zeta^*\xi$ --- where $\zeta$ denotes the generator of $\gal(\xa/X)$ ---, which is precisely the definition of the Prym variety given by (\ref{eq-def-prym}).

Let $\qt(\alpha)$ be the $\Hom(\llambda,Z)$-bundle induced by the natural surjection $\gamtt\to\gs/\gt$ and 
$\ctt:H^1(X,\underline{\gs})\to H^1(X,\Hom(\llambda,Z)).$ One has the following.

\vspace{5 pt}
{\bf Theorem B} (Theorem \ref{th-prym-narasimhan-ramanan}). {\it There is an isomorphism
\begin{equation}
    \bigsqcup_{\qqt(\alpha)=\llambda}\mdl(\xaa, \gt_0,\gam,\tau,c)/Z_{\gamtt}(\gam)\xrightarrow{\sim}\mdl_{\llambda}(X,\gs),
\end{equation}
where $Z_{\gamtt}(\gam)$ is the centralizer of $\gam$ in $\gamtt$, 
which acts on $\mdl(\xaa,\tau,c, \gt_0,\gam)$ by extension of structure group --- see Proposition \ref{prop-action-centralizer}.}
\vspace{5 pt}

The combination of Theorems A and B provides the Prym--Narasimhan--Ramanan construction. This may be regarded as a correspondence between fixed points for the $\Gamma$-action on $M(X,G)$, and fixed points in $\bigsqcup_{\qqt(\alpha)=\llambda}\mdl(\xaa, \gt_0)$ for the action of each Galois group $\gam$ on $\mdl(\xaa, \gt_0)$, which for $\gamma\in\gam$ sends a $\gt_0$-bundle $F$ over $\xa$ to $\gamma^{*-1}\taug(F)$.

Coming back to the situation studied by Narasimhan--Ramanan \cite{narasimhan-ramanan}, set $G=\GL(n,\C)$ and $\llambda<J(X)$ generated by a line bundle $L\to X$ of order $r$. We have seen above that Theorem A implies that a stable vector bundle $E$ of rank $n$ is isomorphic to $E\otimes L$ if and only if the corresponding $\GL(n,\C)$-bundle has a reduction of structure group to $\GL(m,\C)^{\times r}\rtimes_{\tau}(\Z/r\Z)$. By Theorem B, each $\GL(m,\C)^{\times r}\rtimes_{\tau}(\Z/r\Z)$-bundle $E$ over $X$ corresponds to a $(\tau,1)$-twisted $\Z/r\Z$-equivariant $\GL(m,\C)^{\times r}$-bundle $F$ over $X_L$. The corresponding vector bundle $F(\C^n)$ has the form $V_0\oplus\dots\oplus V_{r-1}$ and is equipped with a $\Z/r\Z$-action exchanging the summands. In other words, $V_i\cong \zeta^{*i}V_0$ for each $i$ between $0$ and $r-1$, where $\zeta$ is a generator of $\Z/r\Z$. The vector bundle $E(\C^n)$ is isomorphic to the quotient of $F(\C^n)$ by the natural action of $\Z/r\Z\cong\gal(X_L/X)$ permuting the summands, which is precisely the pushforward of $V_0$ as required.

This paper is organized as follows. In Section \ref{section-moduli} we introduce the moduli space of $G$-bundles when $G$ is a reductive complex Lie group, first when it is connected and then in general. Then, following \cite{BGGM}, we proceed to establish the correspondence between principal bundles $E$ with structure group $G\cong G_0\times_{\tau,c}\Gamma_G$ such that $E/G_0\cong\alpha$, where $G_0$ is the connected component of the identity, and $(\tau,c)$-twisted $\Gamma_G$-equivariant $G_0$-bundles over $\xa$ up to the action of $Z_{\Gamma_G}(\gam)$ --- see Proposition \ref{prop-prym-narasimhan-ramanan}. Here $\alpha$ is any $\Gamma_G$-bundle over $X$, $\xa$ is the corresponding étale cover, $\gam:=\gal(\xa/X)\le\Gamma_G$ and $Z_{\Gamma_G}(\gam)$ is its centralizer.

In Section \ref{section-fixed-points} we develop the Prym--Narasimhan--Ramanan construction, proving Theorems A and B. In Section \ref{section-examples} we apply it to $G=\GL(n,\C),\Spin(n,\C)$ and the simply connected exceptional group $E_7$, assuming that $\Gamma$ is cyclic. More precisely, for $G=\Spin(n,\C)$ and $\Gamma=\langle L\rangle\subset H^1(X,\Z/2\Z\times\Z/2\Z)$, the fixed point components in $M(X,\Spin(n,\C))$ correspond to moduli spaces of twisted equivariant $\Spin(p,\C)\times\Spin(q,\C)$-Higgs bundles over the étale cover $X_L$ determined by $L$. Here $p$ and $q$ take a set of values satisfying $p+q=n$, which depend on the monodromy group of $L$ and the divisibility of $n$ by $4$.
When $G=E_7$ and $\Gamma\cong\Z/2\Z$, the subvariety of fixed points in 
$\mdl(X,E_7)$ contains the image of a moduli space of twisted equivariant $(E_6\times\C^*)/(\Z/3\Z)$-Higgs bundles.

In Section \ref{section-jacobian} we apply the Prym--Narasimhan--Ramanan construction to the case $G=\GL(n,\C)$ and $\llambda$ an arbitrary finite subgroup of the Jacobian $J(X)$. An example is $\Gamma=J(X)[n]$, the subgroup of line bundles of order dividing $n$, which is considered by Hausel--Thaddeus \cite{hausel-thaddeus}. To describe the fixed points, we need to consider antisymmetric homomorphisms $l:\llambda\to\llambda^*:=\Hom(\llambda,\C^*)$, which satisfy that the pairing of any element of $\llambda$ with itself is equal to 1. For each such antisymmetric pairing choose a maximal subgroup $\Delta\le\llambda$ where the pairing is trivial. This is of course equipped with an embedding in $\Hom(\Delta,H^1(X,\C^*))$, which by swapping $\Delta$ and $X$ provides a $\Delta^*$-bundle $\pd:\xd\to X$. Assume that $\vert\Delta\vert$ divides $n$. Given a vector bundle $E$ of rank $n/\vert\Delta\vert$ over $\xd$ and an element $\ambda$ in $\llambda$, we may construct a new vector bundle $l(\ambda)\vert_{\Delta}^*E\otimes\pd^*\ambda$, where $l(\ambda)\vert_{\Delta}\in\Delta^*$ is regarded as an element of $\gal(\xd/X)$. This defines an $l(\llambda)$-action on $\mdl(\xd,\GL(n/\vert\Delta\vert,\C))$.
In Theorem \ref{th-finite-group-jacobian} we state that the union $\bigcup_{l}p_{\Delta*}\mdl(\xd,\GL(n/\vert\Delta\vert,\C))^{l(\llambda)}$ is contained in $\mdl(X,\GL(n,\C))^{\llambda}$, and the smooth fixed point locus $\mdl_*(X,\GL(n,\C))^{\llambda}$ is contained in the union. Here $l$ runs over antisymmetric pairings of $\llambda$, and we declare a component to be empty if $\vert\Delta\vert$ does not divide $n$.

We also apply the Prym--Narasimhan--Ramanan construction to the case $G=\Sp(2n,\C)$ and $\llambda$ a finite subgroup of $H^1(X,\Z/2\Z)$. The result is similar, the symplectic form being realised as a pushforward of an isomorphism $\psi:E\xrightarrow{\sim} q^*E^*$ for some $q\in\Delta^*$, which satisfies $q^*\psi^*=-\psi$ --- see Theorem \ref{th-finite-group-jacobian-sp}. 

The Prym--Narasimhan--Ramanan construction can be applied to moduli spaces of Higgs bundles. This will be developed in a forthcoming paper \cite{oscar-barajas-higgs}, where we consider (more general) actions of finite subgroups of $H^1(X,Z)\rtimes(\Out(G)\times\Aut(X))\times\C^*$.  
The smooth locus of the moduli space of $G$-Higgs bundles is a hyperKähler variety. An application of the Prym--Narasimhan--Ramanan construction is  
the  identification of  hyperK\"ahler and Lagrangian subvarieties of the moduli space, which are the support of branes in the context of mirror 
symmetry and Langlands duality as introduced by Kapustin and Witten \cite{kapustin-witten}. 
For example, if the action leaves the Higgs field invariant, the smooth fixed point locus is hyperKähler and so it is the potential support of BBB-branes. If $\Gamma$ has order 2 and the generator multiplies the Higgs field by $-1$, the smooth fixed point locus is the potential support of BAA-branes. We expect the Prym--Narasimhan--Ramanan construction to provide examples of fully equipped branes: this has been achieved by Franco--Gothen--Oliveira--Peón-Nieto \cite{franco-branes} in the case of the action of a finite cyclic subgroup of the Jacobian on $M(X,\GL(n,\C))$.

We also plan to extend our fixed point description to parabolic bundles in future work. The parabolic setup is geometrically richer and perhaps more natural from the physics point of view, which motivates the study of mirror symmetry \cite{gukov-witten,kapustin-witten}. 

\section{Moduli spaces of principal bundles and twisted equivariant structures}\label{section-moduli}
Throughout this section $G$ is a reductive complex Lie group with Lie algebra $\lie g$, and $X$ is a compact Riemann surface. We review some basic results on principal bundles and twisted equivariant structures, as well as concepts regarding stability.

\subsection{Non-connected groups and twisted equivariant structures}\label{section-equivalence-of-categories}
This section and the next one follow \cite[Section 4]{BGGM}. There is a very explicit relation between principal bundles with non-connected structure group and twisted equivariant bundles which is crucial in the Prym--Narasimhan--Ramanan construction and we explain next. Let $G_0$ be the connected component of a reductive complex Lie group $G$, $Z$ the centre of $G_0$ and $\Gamma_G:=G/G_0$ the group of connected components of $G$. These fit in a short exact sequence
\begin{equation}\label{eq-general-extension}
    1\to G_0\to G\to\Gamma_G\to 1.
\end{equation}
Let $a:\Gamma_G\to \Out(G_0)$ be the characteristic homomorphism of (\ref{eq-general-extension}). It is well known that there exists a lift $\Out(G_0)\to\Aut(G_0)$ of the natural surjection $\Aut(G_0)\to\Out(G_0)$ --- see \cite{BGGM}, for example ---, hence in particular there is a homomorphism $\tau:\Gamma_G\to\Aut(G_0)$ fitting in the commutative diagramme
$$
\begin{tikzcd}
\Aut(G_0)\arrow[r]  & \Out(G_0)\\
  & \Gamma_G\arrow[lu,dotted,"\tau"]\arrow[u,"a"]
\end{tikzcd}.
$$
Pick such a lift $\tau$ of $a$.

\begin{definition}\label{def-twisted-product}
Given a 2-cocycle $c\in Z^2_{a}(\Gamma_G,Z)$, we define the \textbf{$(\tau,c)$-twisted product of $G_0$ by $\Gamma_G$}, written $G_0\times_{(\tau,c)}\Gamma_G$, to be the group which is equal to $G_0\times\Gamma_G$ as a set and has multiplication
$$(g,\gamma)(g',\gamma')=(g\taug(g')c(\gamma,\gamma'),\gamma\gamma')$$
for every $g$ and $g'$ in $G_0$ and every $\gamma$ and $\gamma'$ in $\Gamma_G$.
\end{definition}

\begin{proposition}\label{prop-extensions-isomorphic-twisted-group}
There exists a 2-cocycle $c\in Z^2_{a}(\Gamma_G,Z)$ such that the extensions of $G_0$ given by $G$ and $G_0\times_{(\tau,c)}\Gamma_G$ are equivalent, in the sense that there exists an isomorphism $G\xrightarrow{\sim} G_0\times_{(\tau,c)}\Gamma_G$ fitting into
$$
\begin{tikzcd}[cong/.style = {draw=none,"\xrightarrow{\hspace*{0.2cm}\sim\hspace*{0.2cm}}" description,sloped}, eq/.style = {draw=none,"=" description,sloped}]
1\arrow[r]  & G_0 \arrow[r]\dar[equal] & G \arrow[r]\ar[d,cong] & \Gamma_G \arrow[r]\dar[equal] & 1\\
1\arrow[r]  & G_0 \arrow[r] & G_0\times_{(\tau,c)}\Gamma_G \arrow[r] & \Gamma_G\arrow[r] & 1
\end{tikzcd}.
$$
\end{proposition}
\begin{proof}
Take a section $t:\Gamma_G\to G$ of (\ref{eq-general-extension}) whose composition with the natural homomorphism $G\to\Int(G)$ restricts to $\tau$. Every $g\in G$ can be uniquely written in the form $g_0t_{\gamma}$, where $\gamma$ is the connected component where $g$ lies and $g_0\in G_0$. This determines a map $G\to G_0\times \Gamma_G$. We can then show that this map determines an isomorphism of extensions $G\cong G_0\times_{(\tau,c)}\Gamma_G$ for some map $c:\Gamma_G\times\Gamma_G\to Z$. Using associativity of the group multiplication of $G$ it can be seen that $c\in Z^2_{a}(\Gamma_G,Z)$.
\end{proof}

Slightly abusing notation, we consider the groups $G$ and $G_0\times_{(\tau,c)}\Gamma_G$ to be equal. Let $E$ be a $G$-bundle over $X$ and set $\alpha:=E(G/G_0)\cong E/G_0$, which is a principal $\Gamma_G$-bundle over $X$.  Assume that $E$ --- or, equivalently, $\alpha$ --- is connected. Then $\alpha$ may be regarded as a connected étale cover
$\pa:\xa\to X$ with Galois group $\Gamma_G$.

\begin{definition}\label{def-twisted-equivariant-bundles}
Let $E$ be a complex manifold equipped with a right holomorphic $G_0$-action. Given $c\in Z^2_{\tau}(\Gamma_G,Z)$, a \textbf{$(\tau,c)$-twisted (right) action} of $\Gamma_G$ on $E$ is a set of holomorphic automorphisms $\{\bullet\cdot\gamma\}_{\gamma\in\Gamma}$ of $E$ satisfying \begin{equation}\label{eq-twisted-equivariant-axioms}
(eg)\cdot\gamma=(e\cdot\gamma)\tau_{\gamma^{-1}}(g)\andd(e\cdot\gamma)\cdot\gamma'=(ec(\gamma,\gamma'))\cdot(\gamma\gamma')
\end{equation}
for every $e\in E$, $g\in G_0$ and $\gamma$ and $\gamma'\in\Gamma_G$.

For each $x\in X$ we write $x\cdot\gamma:=\gamma^{-1}(x)$, which defines a right action of $\Gamma_G$ on $X$. A \textbf{$(\tau,c)$-twisted $\Gamma_G$-equivariant $G_0$-bundle} over $X$ is a $G_0$-bundle $E$ over $X$ equipped with a $(\tau,c)$-twisted action of $\Gamma_G$ compatible with its action on $X$, i.e. fitting in the commutative diagramme
  \begin{equation*}
      \begin{tikzcd}
        E\arrow[r,"\cdot\gamma"]\arrow[d] & E\arrow[d]\\
        X\arrow[r,"\cdot\gamma"] & X
      \end{tikzcd}
  \end{equation*}
  for each $\gamma\in\Gamma_G$. This is called a \textbf{$(\tau,c)$-twisted $\Gamma_G$-equivariant action} on $E$.
A \textbf{morphism} between two $(\tau,c)$-twisted $\Gamma_G$-equivariant $G_0$-bundles $(E,\cdot)$ and $(E',\cdot)$ over $X$ is a homomorphism of $G_0$-bundles $f:E\to E'$ fitting in the commutative diagramme
\begin{equation*}
    \begin{tikzcd}
        E\arrow[r,"f"]\arrow[d,"\cdot\gamma"]&E'\arrow[d,"\cdot\gamma"]\\
        E\arrow[r,"f"]&E'
    \end{tikzcd}
\end{equation*}
for each $\gamma\in\Gamma_G$.
\end{definition}

\begin{remark}
By \cite[Proposition 3.11]{BGGM}, given a map $s:\Gamma_G\to G_0$ such that $\Int_s$ is a homomorphism, there is a natural equivalence of categories between $(\tau,c)$-twisted $\Gamma_G$-equivariant $G_0$-bundles over $X$ and $(\tau\Int_s,c_s)$-twisted $\Gamma_G$-equivariant $G_0$-bundles over $X$, where $c_s$ is a suitable 2-cocycle. In particular, if we replace $c$ by a cohomologous 2-cocycle we obtain an equivalent category. Thus, in what follows we may assume the lift $\tau$ to be chosen so that it preserves a maximal compact subgroup $K$ of $G$.
\end{remark}

\begin{remark}
    Given two $(\tau,c)$-twisted $\Gamma_G$-equivariant $G_0$-bundles $(E,\cdot)$ and $(E',\cdot)$ over $\xa$, we may define the fibre bundle $\hhom(E,E')$ consisting of local $G_0$-equivariant isomorphisms from $E$ to $E'$. This is isomorphic to the quotient $E\times_XE'/G$, where $G$ acts diagonally on the right. We claim that there is a $\Gamma_G$-equivariant action on $\hhom(E,E')$ such that $\gamma\in\Gamma_G$ sends $(e,e')\in E\times_XE'/G$ to $(e\cdot\gamma,e'\cdot\gamma)$. Indeed, 
\begin{equation*}
    (eg\cdot\gamma,e'g\cdot\gamma)=(e\cdot\gamma\taug^{-1}(g),e'\cdot\gamma\taug^{-1}(g))=(e\cdot\gamma,e'\cdot\gamma),
\end{equation*}
and
\begin{equation*}
    ((e\cdot\gamma)\cdot\gamma',(e'\cdot\gamma)\cdot\gamma')=(ec(\gamma,\gamma')\cdot(\gamma\gamma'),e'c(\gamma,\gamma')\cdot(\gamma\gamma'))=(e\cdot(\gamma\gamma'),e'\cdot(\gamma\gamma'))
\end{equation*}
for each $\gamma$ and $\gamma'$ in $\Gamma_G$, $g\in G_0$, $e\in E$ and $e'\in E'$.
With this in mind, a morphism of $(\tau,c)$-twisted $\Gamma_G$-equivariant $G_0$-bundles from $E$ to $E'$ is a $\Gamma_G$-invariant section of $\hhom(E,E')$.
\end{remark}

Given $\gamma\in Z(\Gamma_G)$ and a $(\tau,c)$-twisted $\Gamma_G$-equivariant $G_0$-bundle $E$ over $X$, there is an induced $(\tau,c)$-twisted $\Gamma_G$-equivariant action on $\gamma^*E$ given by pullback. Note that $\gamma$ needs to be in $Z(\Gamma_G)$ for this to be true, since otherwise the pullback of the action does not descend to the action of $\Gamma_G$ on $X$. We then define a \textbf{$Z(\Gamma_G)$-isomorphism} between two twisted equivariant bundles $E$ and $E'$ to be a $\Gamma_G$-equivariant isomorphism of $G_0$ bundles $E\to\gamma^*E'$ for some $\gamma\in Z(\Gamma_G)$. To distinguish between isomorphisms as defined earlier and $Z(\Gamma_G)$-isomorphisms we sometimes call the former ones \textbf{fibre-preserving isomorphisms}.


\begin{proposition}[Proposition 4.5 in \cite{BGGM}]\label{prop-twisted-equivariant-bundles-one-to-one}
The category of $G$-bundles $E$ over $X$ satisfying
$E/G_0\cong \alpha$
and the category of $(\tau,c)$-twisted $\Gamma_G$-equivariant $G_0$-bundles over $\xa$ equipped with $Z(\Gamma_G)$-isomorphisms are equivalent.
\end{proposition}
\begin{proof}[Sketch of the proof]
    Given a $G$-bundle $E$ over $X$ such that $E/G_0\cong \alpha$, the morphism $E\to E/G_0\cong \xa$ is a $G_0$-bundle projection. Moreover, using the section $t$ given by the proof of Proposition \ref{prop-extensions-isomorphic-twisted-group}, each element of $\Gamma_G$ is associated to a complex automorphism of the total space of $E$ which is clearly compatible with the $\Gamma_G$-bundle action on $\alpha$. This can be seen to be a $(\tau,c)$-twisted $\Gamma_G$-equivariant action.
\end{proof}

\begin{remark}
Note that the equivalence of categories is not true if we replace the category of twisted equivariant bundles with $Z(\Gamma_G)$-isomorphisms with the subcategory of twisted equivariant bundles with fibre-preserving morphisms, since an automorphism of a $G$-bundle $E$ on $X$ may not induce the identity on $E/G_0$.
\end{remark}


When an embedding $G\hookrightarrow\GL(n,\C)$ is available, Proposition \ref{prop-twisted-equivariant-bundles-one-to-one} may be stated in terms of the corresponding vector bundles.

\begin{proposition}\label{prop-associated-bundle-equivariant}
Take a $G$-bundle $E$ over $X$ such that $E/G_0\cong \alpha$. Let $(F,\cdot)$ be the corresponding $(\tau,c)$-twisted $\Gamma_G$-equivariant $G_0$-bundle over $\xa$ via Proposition \ref{prop-twisted-equivariant-bundles-one-to-one}, where $\xa$ is the étale cover of $X$ corresponding to $\alpha$. Then there is an equivariant group action of $\Gamma_G$ on the associated vector bundle $F(\C^n)$ such that $(e,v)\cdot\gamma=(e\cdot\gamma,(1,\gamma)^{-1}v)$ for each $(e,v)\in F(V)$ and $\gamma\in\Gamma_G$. Moreover, 
$$F(\C^n)/\Gamma_G\cong E(\C^n).$$

Conversely, given a $\Gamma_G$-equivariant structure on a vector bundle $V$ over $\xa$, there is an induced $\Gamma_G$-equivariant action $*$ on its bundle of frames $P$. For each $\gamma\in\Gamma_G$ consider the automorphism of $P$ given by
\begin{equation}\label{eq-dot-vs-*}
    p\cdot\gamma:=p*\gamma\cdot (1,\gamma)
\end{equation}
for each $p\in P$. The set of automorphisms $\{(\bullet\cdot\gamma)\}_{\gamma\in\Gamma_G}$ defines an $(\Int_{t},c)$-twisted $\Gamma_G$-equivariant action on $P$, where $t:\Gamma_G\to G$ is the map sending $\gamma$ to $(1,\gamma)$. If $V=F(\C^n)$ for some $G_0$-bundle $F$ and $F\cdot\gamma=F\subset P$ for every $\gamma\in\Gamma_G$, the restriction of $\cdot$ to $F$ is a $(\tau,c)$-twisted $\Gamma_G$-equivariant action inducing the action on $V.$
\end{proposition}
\begin{proof}
    The fact that the equivariant $\Gamma_G$-action on $F(\C^n)$ is an honest action follows from \cite[Proposition 3.16]{BGGM}. 
    
    There is another way to define the equivariant action on $F(\C^n)$, namely take the extension of structure group $F_G$ of $F$ by the embedding of $G_0$ in $G$. Then $\Gamma_G$ acts equivariantly on $F_G$ in such a way that $\gamma\in\Gamma_G$ sends $e\in F\subset F_G$ to $e\cdot\gamma(1,\gamma)^{-1}\in F_G$ --- see \cite[(4.2)]{BGGM}. The associated vector bundle $F_G(\C^n)$ inherits a $\Gamma_G$-equivariant action which is precisely the one defined in the statement of the proposition. By \cite[Proposition 4.7]{BGGM}, the quotient $F_G/\Gamma_G$ is isomorphic to $E$. Therefore, the quotient $F(\C^n)/\Gamma_G=F_G(\C^n)/\Gamma_G$ is isomorphic to $E(\C^n)$, as required.

    The second paragraph in the statement of the proposition follows from
    \begin{align*}
        (e,(v_1,\dots, v_n))\cdot \gamma&=(e\cdot\gamma,(1,\gamma)^{-1}(v_1,\dots, v_n)) (1,\gamma)\\&=(e\cdot\gamma(1,\gamma)^{-1}(1,\gamma),(v_1,\dots, v_n))\\&=(e\cdot\gamma,(v_1,\dots, v_n)),
    \end{align*}
    where $e\in F$, $(v_1,\dots, v_n)$ is any basis of $\C^n$ and $e\cdot\gamma(1,\gamma)^{-1}$ is an element of $P$, regarded as the extension of structure group of $F$ to $\GL(n,\C)$.
\end{proof}

\subsection{Twisted equivariant bundles and monodromy}\label{section-monodromy}

We keep the notation of the previous section but we do not assume that the $G$-bundle $E$ is connected. We rename
$\pa:\xa\to X$, which is now a connected component of $\alpha:=E/G_0$. All the other connected components are isomorphic to $\xa$ via the $\Gamma_G$-action. Alternatively, $\xa$ is a subbundle of $\alpha$ with minimal structure group, which we denote by $\gam$: any two different connected reductions of structure group are in different components of $\alpha$ since $\Gamma_G$ is finite. Thus, the image of $\pi_1(X)$ under any monodromy representation $\pi_1(X)\to\Gamma_G$ of $\alpha$ is a subgroup of $\Gamma_G$ conjugate to $\gam$.

There is a subgroup 
$\ga:=G_0\times_{(\tau,c)}\gam$ of $G$, where we have also called $\tau$ and $c$ to their restrictions to $\gam$ and $\gam\times\gam$ respectively. There is a subbundle $E'\subset E$ with structure group $\ga$, since $E/\ga\cong \alpha/\gam$, and hence the section of $H^0(X,\alpha/\gam)$ corresponding to $\xa$ may be regarded as a section of $H^0(X,E/\ga)$. Note that $E'/G_0$ is isomorphic to $\xa$.

\begin{proposition}\label{prop-action-centralizer}
    Let $Z_{\Gamma_G}(\gam)$ be the centralizer of $\gam$ in $\Gamma_G$. Let $\isoc(\xa,G_0,\gam,\tau,c)$ be the set of (fibre-preserving) isomorphism classes of $(\tau,c)$-twisted $\gam$-equivariant $G_0$-bundles over $\xa$.
    There is a natural action of $Z_{\Gamma_G}(\gam)$ on $\isoc(\xa,G_0,\gam,\tau,c)$ on the left given as follows: take a $(\tau,c)$-twisted $\gam$-equivariant $G_0$-bundle $(E,\cdot)$ over $\xa$ and an element $z\in Z_{\Gamma_G}(\gam)$. 
    \begin{itemize}
        \item There is another $G_0$-bundle $\tau_z(E)$ given by extension of structure group.
        \item There is a natural $(\tau,c)$-twisted $\gam$-action on $\tau_z(E)$ given by 
        \begin{equation}\label{eq-action-centralizer-on-twisted-equivariant}
            e*\gamma:=[ec(z^{-1},z)^{-1}c(z^{-1},\gamma)c(z^{-1}\gamma,z)]\cdot\gamma.
        \end{equation}
    \end{itemize}
    Via Proposition \ref{prop-twisted-equivariant-bundles-one-to-one}, this induces an action of $\cent$ on the set of isomorphism classes $H^1(X,\underline{\ga})_{\xa}$ of $\ga$-bundles $E_{\ga}$ such that $E_{\ga}/G_0\cong \xa$, which is just extension of structure group by $\Int_{(1,z)}$ for each $z\in \cent$.
\end{proposition}
\begin{proof}
Probably the best way to think of (\ref{eq-action-centralizer-on-twisted-equivariant}) is to take the $\ga:=G_0\times_{(\theta,c)}\gam$-bundle $E_{\ga}$ over $X$ associated to $E$ according to Proposition \ref{prop-twisted-equivariant-bundles-one-to-one}, and then define an alternative $\ga$-action by
\begin{equation}\label{eq-action-centralizer-on-twisted-equivariant-equivalent}
    E_{\ga}\times \ga\to E_{\ga};\,(e,(g,\gamma))\mapsto e*(g,\gamma):=e(1,z)^{-1}(g,\gamma)(1,z),
\end{equation}
i.e. consider the extension of structure group $\Int_{(1,z)}(E_{\ga})$. Recall that the total space of $E_{\ga}$ and $E$ are the same. The restriction of the action to $G_0$ is then given by 
\begin{equation*}
    e*g=e(1,z)^{-1}g(1,z)=e\tau_z^{-1}(g)
\end{equation*}
for each $e\in E_{\ga}$ and $g\in G_0$, which is precisely the $G_0$-action on the total space of the underlying $G_0$-bundle $E$ defining the action of $G_0$ on $\tau_z(E)$. Definition (\ref{eq-action-centralizer-on-twisted-equivariant}) for the action of $\gam$ on $\tau_z(E)$ is equivalent to the $\gam$-action given by restriction of (\ref{eq-action-centralizer-on-twisted-equivariant-equivalent}), since
\begin{align*}
    (1,z)^{-1}(1,\gamma)(1,z)&=
    (c(z^{-1},z)^{-1},z^{-1})(1,\gamma)(1,z)\\&=
    (c(z^{-1},z)^{-1}c(z^{-1},\gamma),z^{-1}\gamma)(1,z)\\&=
    (c(z^{-1},z)^{-1}c(z^{-1},\gamma)c(z^{-1}\gamma,z),z^{-1}\gamma z)\\&=
    (c(z^{-1},z)^{-1}c(z^{-1},\gamma)c(z^{-1}\gamma,z),\gamma),
\end{align*}
where the last equation follows from the fact that $z$ commutes with every element of $\gam$. Thus the pair consisting of the $G_0$-action and the $\gam$-action (\ref{eq-action-centralizer-on-twisted-equivariant}) on $E$ is equivalent to the natural $\ga$-action on $\Int_{(1,z)}(E_{\ga})$ by the proof of Proposition \ref{prop-twisted-equivariant-bundles-one-to-one}. It can be checked that (\ref{eq-action-centralizer-on-twisted-equivariant}) defines a $(\tau,c)$-twisted $\gam$-equivariant action.
\end{proof}

\begin{proposition}\label{prop-prym-narasimhan-ramanan}
Let $\cC_1$ be the category of $(\tau,c)$-twisted $\gam$-equivariant $G_0$-bundles over $\xa$, and let $\cC_2$ and $\cC_3$ be the categories of $\ga$-bundles and $G$-bundles over $X$, respectively. The composition of the equivalence of categories $\cC_1\to\cC_2$ of Proposition \ref{prop-twisted-equivariant-bundles-one-to-one} with the extension of structure group functor $\cC_2\to\cC_3$ provides a functor $\cC_1\to\cC_3$. With the notation of Proposition \ref{prop-action-centralizer}, this induces a bijection
\begin{equation}\label{eq-narasimhan-ramanan-iso}
    \isoc(\xa,{G_0},\gam,\tau,c)/Z_{\Gamma_G}(\gam)\xrightarrow{\sim} H^1_{\alpha}(X,\ug),
\end{equation}
where  $H^1_{\alpha}(X,\ug)$ is the set of isomorphism classes of $G$-bundles $E$ over $X$ such that $E/G_0\cong \alpha$. 
\end{proposition}

\begin{proof}
Surjectivity follows by Proposition \ref{prop-twisted-equivariant-bundles-one-to-one}. To show that the morphism is well-defined, consider a $\ga$-bundle $E$ and $z\in\cent$. The element $s:=(1,z)\in G$ determines an automorphism $\beta:=\Int_{s^{-1}}$ of $\ga$ which defines an extension of structure group $\beta(E)$. Let $E_G$ be the extension of structure group of $E$ by the embedding $\ga\hookrightarrow G$. Then the stabilizer of $E$ under the $G$-bundle action is equal to $\ga$, which implies that the stabilizer of $Es\subset E_G$ is equal to $s^{-1}\ga s=\ga$. In other words, $Es$ determines a reduction of structure group of $E_G$ to $\ga$. Moreover, the map
$$E\to Es;\,e\mapsto es$$
induces an isomorphism of $\ga$-bundles $\beta(E)\cong Es$. Indeed, $\beta(E)$ may be regarded as the $\ga$-bundle which
has the same total space as $E$ and $G$-action determined by
$$E\times \ga\to E;\, (e,g)\mapsto e\beta^{-1}(g).$$
But
$$e\beta^{-1}(g)s=esgs^{-1}s=esg,$$
which shows that the induced map $\beta(E)\to Es$ is $ \ga$-equivariant. This implies that $E's$, which is a reduction of structure group of $E_G$ to $ \ga$, is isomorphic to $\beta(E)$. In other words, $E_G$ is the extension of structure group of $\beta(E)$ by the embedding $ \ga\hookrightarrow G$.

It is left to show injectivity. Let $F$ and $F'$ be two ($\tau,c$)-twisted $\gam$-equivariant $G_0$-bundles over $\xa$ and let $E$ and $E'$ be the corresponding $ \ga$-bundles over $X$. Since $G_0$ and $\xa$ are connected, both $E$ and $E'$ are connected. Assume that they have the same extension of structure group $ E_G$ to $  G$. Note that $E_G$ has an explicit decomposition into connected components, namely
\begin{equation*}
    E_G=\bigsqcup_{\gamma\gam\in \Gamma_G/\gam}E(1,\gamma),
\end{equation*}
where each coset in $\Gamma_G/\gam$ has one and only one representative component in the union. Thus $E'$ must be equal to one of these components, say $E(1,\gamma)$ for some $\gamma\in\Gamma_G$. If $s:=(1,\gamma)$ then the stabilizer of $E'=Es$ in $G$ is $\ga$, hence the stabilizer of $Es/G_0$ is $\gam$, which is identified with the Galois group of $Es$ over $X$. We may identify $\xa$ with the quotient $E/G_0\subset E_G/G_0$, thus fixing a copy of $\xa$ inside $E/G_0\cong Y$. The first observation is that, on the one hand, $Es/G_0=E'/G_0\cong \xa$ and, on the other, $Es/G_0=(E/G_0)\gamma\cong \xa\gamma\subset Y$. Thus we obtain an isomorphism of $\gam$-bundles $\xa\cong \xa\gamma$. This is the composition of the map sending $x\in \xa$ to $x\gamma\in \xa\gamma$ and an automorphism of $\xa\gamma$ over the identity on $X$, i.e. an element of the Galois group of $\xa\gamma$ over $X$, which is equal to $\gam$. At the end of the day the isomorphism $\xa\cong \xa\gamma$ is given by an element $z\in\Gamma_G$. Since it is an isomorphism of $\gam$-bundles it must commute with the action of $\gam$, i.e. it must lie in the centralizer $\cent$. Therefore $E'=E(1,z)$ for some element $z\in\cent$, which means that $E'\cong\Int_{(1,z)}^{-1}(E)$ as required.
\end{proof}

\subsection{Stability and moduli spaces}\label{section-stability}

Let $G$ be a --- not necessarily connected --- reductive complex Lie group. Throughout this section, we keep the same notation as in Section \ref{section-equivalence-of-categories}. Fix a $\Gamma$-invariant maximal compact subgroup $K$ of $G$ with Lie algebra $\lie k$ and a $G$-invariant non-degenerate pairing $\langle\cdot,\cdot\rangle$ on $\lie g$. Every element $s\in i\lie k^{\Gamma}$ determines a parabolic subgroup $P_s\le G$ with Lie algebra $\lie p_s$, namely
\begin{equation}\label{eq-def-Ps}
    P_s:=\{g\in G\suhthat e^{ts}ge^{-ts}\,\text{is bounded as}\;t\to\infty\}.
\end{equation}
If $L_s$ is its Levi subgroup then $K_s:=K\cap L_s$ is a maximal compact subgroup of $L_s$ and its inclusion in $P_s$ is a homotopy equivalence. Now let $E$ be a $G$-bundle with a holomorphic reduction $\sigma\in H^0(X,E(G/P_s))$, where $E(G/P_s)$ is the $G/P_s$-bundle associated to $E$ via the natural $G$-action. We denote by $E_{\sigma}$ the corresponding $P_s$-bundle, which is the pullback by $\sigma$ of $E\to E(G/P_s)$. Then there is a smooth reduction $\sigma'\in\Omega^0(X,E_{\sigma}/K_s)$, and we may equip the corresponding $K_s$-bundle with a connection $A$ with curvature $F_A$. We define
\begin{equation}\label{eq-def-deg}
    \deg E(\sigma,s):=\frac{i}{2\pi}\int_X\chi_s(F_A),
\end{equation}
where $\chi_s$ is the image of $s$ under the isomorphism $\lie g\cong\lie g^*$ induced by the non-degenerate pairing.

On the other hand, if $E$ is connected, the topology of $E$ defines an element $\mu(E)\in i\zk^{\Gamma_G}$, where $\zk$ is the centre  of $\lie k$ --- see, for example, \cite{oscar-oliveira}. Indeed, fix a volume form $\omega$ on $X$ such that $\text{vol}(X)=1$. For each $K$-invariant degree one homogeneous polynomial $p:\lie k\to\C$ and any smooth reduction $h\in\Omega^0(X,E/K)$ with Chern connection $A_{h}$ and curvature $F_{h}$, there is a topological invariant $\int_Xp(F_{h})$ of $E$.
The space of $K$-invariant linear homogeneous polynomials coincides with the space of $\R$-linear maps $\lie k\to\C$ factoring through $\lie k/[\lie k,\lie k]\cong \zk$, hence it is isomorphic to $\zk^*$. We define $\mu(E)$ to be the element of $i\zk$ satisfying
\begin{equation}\label{eq-def-mu(E)}
    p(\mu(E))=\frac i{2\pi}\int_Xp(F_{h}).
\end{equation}
Since, for each $\gamma\in\Gamma$, the $G$-bundle $\Int_{(1,\gamma)}(E)\cong E$ has topological invariant $\taug(\mu(E))$, the topological invariant $\mu(E)$ is actually in $i\zk^{\Gamma_G}$.

\begin{definition}\label{definition-stability-1}
A connected $G$-bundle $E$ over $X$ is:
\begin{itemize}
    \item \textbf{Stable} if $\deg E(\sigma,s)> \pair{\mu(E)}s$ for any $s\in i\lie k^{\Gamma_G}$ and any reduction of structure group $\sigma\in H^0(X,E(G/P_s))$.
    \item \textbf{Semistable} if $\deg E(\sigma,s)\ge \pair{\mu(E)}s$ for any $s\in i\lie k^{\Gamma_G}$ and any reduction of structure group $\sigma\in H^0(X,E(G/P_s))$.
    \item \textbf{Polystable} if it is semistable and, if $\deg E(\sigma,s)=\pair{\mu(E)}s$ for some $s\in i\lie k^{\Gamma_G}$ and a reduction $\sigma\in H^0(X,E(G/P_s))$, there is a further holomorphic reduction of structure group $\sigma'\in H^0(X,E_{\sigma}(P_s/L_s))$.
\end{itemize}
\end{definition}

\begin{definition}\label{definition-stability-2}
    Given a --- not necessarily connected --- $G$-bundle $E$ over $X$, let $\xa\subset E/G$ be a connected component of $E/G$. According to Section \ref{section-monodromy}, $\xa$ is a $\gam$-bundle over $X$, where $\gam$ is the image of a monodromy representation of $E/G$. Let $\ga:=G_0\times\gam\le G$. Then $E$ is (semi, poly)stable if any of its reductions of structure group to $\ga$ is (semi, poly)stable --- the $\ga$-bundle obtained from any such reduction is connected, so Definition \ref{definition-stability-1} applies. 
\end{definition}

\begin{remark}
    Notice that Definition \ref{definition-stability-2} can be equivalently stated as follows. A $G$-bundle $E$ is (semi, poly)stable if any of its reductions of structure group to a subgroup containing the connected component of the identity $G_0\le G$ is (semi, poly)stable. This version of the definition highlights that it does not depend on the choice of monodromy representation for $E/G$.
\end{remark}

Note that $\Gamma_G$ is trivial if $G$ is connected. In this case, a coarse moduli space classifying isomorphism classes of polystable $G$-bundles was constructed by Ramanathan in \cite{ramanathan-moduli-I,ramanathan-moduli-II}.
Its smooth locus $\mdl_*(X,G)$ consists of the simple and stable points, except in special low genus cases --- recall that a $G$-bundle $E$ is \textbf{simple} if its group of automorphisms is isomorphic to the centre $Z$ of $G$. In the case where $G=\GL(n,\C)$ we obtain, via the correspondence with vector bundles of rank $n$, the usual definition of slope (semi, poly)stability for vector bundles. Stable $\GL(n,\C)$-bundles are always simple, but this is not true for general $G$.

For arbitrary $G$, the existence of a coarse moduli space $\mdl_{\alpha}(X,G)$ classifying isomorphism classes of polystable $G$-bundles $E$ such that $E/G\cong\alpha$ follows, via Proposition \ref{prop-prym-narasimhan-ramanan}, from the GIT construction of the moduli space $\mdl(\xa,{G_0},\gam,\tau,c)$ of $(\tau,c)$-twisted $\gam$-equivariant $G_0$-bundles given in \cite{yo-moduli}. Indeed, the formula
\begin{equation*}
    \mdl(X,G)\cong \mdl(\xa,{G_0},\gam,\tau,c)/Z_{\Gamma_G}(\gam)
\end{equation*}
may be regarded as a definition. A good moduli space for the stack of semistable $G$-bundles has been constructed by Olsson--Reppen--Tajakka \cite{reppen}.

The moduli space $\mdl(\xa,{G_0},\gam,\tau,c)$ classifies isomorphism classes of polystable $(\tau,c)$-twisted $\gam$-equivariant $G_0$-bundles, according to the following definition --- see \cite{oscar-ignasi-gothen}.

\begin{definition}\label{def-stability-twisted-equivariant}
A $(\tau,c)$-twisted $\gam$-equivariant $G_0$-bundle is:
    \begin{itemize}
    \item \textbf{Stable} if $\deg E(\sigma,s)> \pair{\mu(E)}s$ for any $s\in i\lie k^{\gam}$ and any $\gam$-invariant reduction of structure group $\sigma\in H^0(X,E(G/P_s))^{\gam}$.
    \item \textbf{Semistable} if $\deg E(\sigma,s)\ge \pair{\mu(E)}s$ for any $s\in i\lie k^{\gam}$ and any $\gam$-invariant reduction of structure group $\sigma\in H^0(X,E(G/P_s))^{\gam}$.
    \item \textbf{Polystable} if it is semistable and, if $\deg E(\sigma,s)=\pair{\mu(E)}s$ for some $s\in i\lie k^{\gam}$ and a $\gam$-invariant reduction $\sigma\in H^0(X,E(G/P_s))^{\gam}$, there is a further $\gam$-invariant holomorphic reduction of structure group $\sigma'\in H^0(X,E_{\sigma}(P_s/L_s))^{\gam}$.
\end{itemize}
\end{definition}

The following is a generalization of the Narasimhan--Seshadri Theorem \cite{narasimhan-seshadri}, stated in terms of connections as in \cite{donaldson}.
\begin{theorem}\label{th-ramanathan-narasimhan-seshadri}
Let $G$ be a --- possibly non-connected --- reductive complex Lie group. A connected $G$-bundle $E$ is polystable if and only if it has smooth reduction of structure group $h\in \Omega^0(X,E/K)$ to a maximal compact subgroup $K<G$, such that the curvature $F_h$ of the corresponding Chern connection satisfies
\begin{equation}
    F_h=-2\pi i\mu(E)\omega,
\end{equation}
where $\omega$ is a non-vanishing 2-form on $X$ such that $\int_X{\omega}=1$.
\end{theorem}

Theorem \ref{th-ramanathan-narasimhan-seshadri} is due to Garc\'ia-Prada--Gothen--Mundet i Riera. For $G$ connected it was proven by Ramanathan \cite{ramanathan-narasimhan-seshadri}.

\section{Fixed points  and the Prym--Narasimhan--Ramanan construction}\label{section-fixed-points}
Let $X$ be a compact Riemann surface and let $G$ be a connected reductive complex Lie group with centre  $Z$. Let $\llambda$ be a finite subgroup of the group $H^1(X,Z)$  of isomorphism classes of holomorphic principal $Z$-bundles over $X$.


\subsection{Finite group actions on the moduli space}\label{section-action}
We define an action of $H^1(X,Z)$ on the set of isomorphism classes of holomorphic principal $G$-bundles as follows. Let $\alpha\in H^1(X,Z)$ and $E$ be a $G$-bundle. Consider the fibre product $E\times_X\alpha$ with respect to $X$, which is associated to the diagramme
\[\begin{tikzcd}
E\times_X\alpha\arrow{r}\arrow{d} &
E\arrow{d}\\
\alpha\arrow{r} & X
\end{tikzcd}.
\]
This is a $G\times Z$-bundle. Let $E\otimes\alpha$ be the extension of structure group of $E\times_X\alpha$ by the multiplication homomorphism $G\times Z\to G$.
This is a $G$-bundle over $X$ whose isomorphism class only depends on the isomorphism classes of $E$ and $\alpha$.

Now fix a maximal compact subgroup $K$ in $G$ with Lie algebra $\lie k$ and let $\llambda$ be a finite subgroup of $H^1(X,Z)$. Since $K\cap Z$ is a maximal compact subgroup of $Z$, every element $\ambda\in\llambda<H^1(X,Z)$ has a smooth reduction $h_Z\in\Omega^1(X,\ambda(Z/K\cap Z))$ determining a Chern connection $A$ with curvature $F_A$. Since $\ambda$ has finite order, $A$ induces a connection $A_0$ with curvature $F_{A_0}$ on the trivial $Z$-bundle via tensorization. We may define $\mu(\ambda)\in i\zk\cong i(\zk^*)^*$ using the map from the space of degree one $K$-invariant homogeneous polynomials to $\C$ given by
$$p\mapsto \int_Xp(F_A).$$
With definitions as in Section \ref{section-stability}, if $\ambda$ has order $m$ then
$$\int_X\chi_s(F_A)=\frac1m\int_X\chi_s(F_{A_0})=0\andd \int_Xp(F_A)=\frac1m\int_Xp(F_{A_0})=0$$
for every $s\in i\lie k$ and every $K$-invariant degree one homogeneous polynomial $p:\lie k\to\C$. The second equation implies that $\mu(\ambda)=0$.
On the other hand, given a $G$-bundle $E$ and a parabolic subgroup $P$ in $G$, there is a natural bijection between reductions in $H^0(X,E/P)$ and reductions in $H^0(X,(E\otimes\ambda)/P)$ induced by the isomorphism $E/P\cong (E\otimes Z)/P$, which follows from $Z\subseteq P$. Similarly, a reduction of $E$ to the Levi subgroup $L\le P$ induces a reduction of $E\otimes\ambda$ to $L$. Moreover, for every $s\in i\lie k$ and every reduction $\sigma\in H^0(X,E/P_s)$, one has 
$$\deg (E\otimes\ambda)(\sigma,s)=\deg E(\sigma,s)+\frac i{2\pi}\int_X\chi_s(F_A)=\deg E(\sigma,s),$$
and similarly
$$\mu(E\otimes\ambda)=\mu(E)+\mu(\ambda)=\mu(E).$$
Therefore, the action of $\llambda$ on the set of isomorphism classes of $G$-bundles preserves the set of (semi, poly)stable $G$-bundles, and it induces an algebraic group action on $\mdl(X,G)$. Since the automorphisms of $E$ are in natural bijection with automorphisms of $E\otimes\ambda$, this action preserves $\mdl_*(X,G)$.

The aim of this paper is to describe the fixed points in $\mdl(X,G)$ under the action of $\llambda$. Note that, when $G=\GL(n,\C)$, $Z=\C^*$ and $\llambda$ may be regarded as a subgroup of the Jacobian of $X$. Thus, in particular, we describe the fixed points in the moduli space of vector bundles of rank $n$  under the action of a finite subgroup of the Jacobian. An explicit solution to this is given in Section \ref{section-jacobian}.


\subsection{Inner automorphisms of a reductive complex Lie group}\label{section-Gtheta}
Let $\Int(G)$ be the group of inner automorphisms of $G$. Consider a homomorphism
$$\theta:\llambda\to\Int(G);\,\ambda\mapsto\tg.$$

We define
$$\gt:=\{g\in G\suhthat\tg(g)=g\forevery\ambda\in\llambda\}\le G$$
and
$$\gs:=\{g\in G\suhthat\tg(g)g^{-1}=z(\ambda,g)\in Z\forevery\ambda\in\llambda\}\le G;$$
note that $Z$ is a subgroup of $\gt$, hence the action of $\llambda$ on $G$ preserves $\gs$. 


To understand how $\gt$ lies inside $\gs$ we use the exact sequence of groups
\begin{equation}\label{eq-exact-seq-groups}
    1\to \gt\to\gs\xrightarrow{\cct} \Hom(\llambda,Z),
\end{equation}
where $\cct$ sends $g\in\gs$ to the map
$$\llambda\to Z;\,\ambda\mapsto g^{-1}\tg(g)=\tg(g)g^{-1}=z(\ambda,g)\in Z.$$
The fact that $\gt$ is the kernel of $\cct$ implies, in particular, that it is a normal subgroup of $\gs$. To see why $\cct$ is well defined note that, if $\ambda$ and $\ambda'$ are elements of $\llambda$ and $g\in \gs$, one has
$$\theta_{\ambda\ambda'}(g)=\tg\left(\theta_{\ambda'}(g)\right)=
\tg(\cct(\ambda')g)=\cct(\ambda')\tg(g)=\cct(\ambda')\cct(\ambda)g,$$
where $\cct$ is evaluated at $g$. The exactness of (\ref{eq-exact-seq-groups}) implies that $\cct$ factors through the quotient $\gamt:=\gs/\gt$ via an injective homomorphism $\gamt\hookrightarrow\Hom(\llambda,Z).$ On the other hand, $\Hom(\llambda,Z)$ is finite because $Z$ is a finite extension of a torus, which implies that the set of elements in $Z$ with a certain finite order is finite.
In particular, $\gs$ is a finite extension of $\gt$ and the reductiveness of $\gt$ --- see Proposition 3.6 in chapter 3 of \cite{onishchik3} --- is inherited by $\gs$.

The homomorphism $\cct$ induces a morphism of pointed sets
$$\ctt:H^1(X,\underline{\gs})\to H^1(X,\Hom(\llambda,Z))\cong H^1(X,\underline{\Hom(\llambda,Z)}),$$
which are the corresponding sets of equivalence classes of \v Cech 1-cocycles, by extension of structure group. The last isomorphism holds because $\Hom(\llambda,Z)$ is finite.
Note that the factorization of $\ctt$ through the cohomology of the quotient
$H^1(X,\gamt)\to H^1(X,\Hom(\llambda,Z))$
is injective, since it is compatible with the isomorphisms $H^1(X,\gamt)\cong\Hom(\pi_1(X),\gamt)$ and $H^1(X,\Hom(\llambda,Z))\cong\Hom(\pi_1(X),\Hom(\llambda,Z))$, which follow from $Z$ and $\gamt$ being abelian (the group $\Hom(\llambda,Z)$ is abelian, hence so is its subgroup $\gamt$).

\subsection{Simple fixed points and elements of \texorpdfstring{$\Hom(\llambda,\Int(G))/\Int(G)$}{the character variety}}\label{section-simple-and-H}
The first step to describe the fixed points is to construct a map
\begin{equation*}
    \wf:H^1(X,\ug)_s^{\llambda}\to \x(\llambda,\Int(G));
\end{equation*}
here $H^1(X,G)_s^{\llambda}$ is the set of isomorphism classes of simple $G$-bundles which are fixed under the action of $\llambda$ and $\x(\llambda,\Int(G)):=\Hom(\llambda,\Int(G))/\Int(G)$ is the character variety consisting of equivalence classes of homomorphisms $\llambda\to \Int(G)$ with equivalence relation given by conjugation by $\Int(G)$. Note that $\x(\llambda,\Int(G))$ is actually equal to the GIT quotient of representations $\Hom(\llambda,G/Z)\sslash G$, since the fact that $\llambda$ is finite implies that every representation of $\llambda$ is reductive (see \cite{mumford} and \cite{representations-finitely-generated-groups}). Also note that $G/Z$ is an affine algebraic variety, which implies that $\x(\llambda,\Int(G))$ is affine.

Let $E$ be a simple $G$-bundle over $X$ and assume that an isomorphism
$$\hg:E\xrightarrow{\sim}E\otimes\ambda$$ is given
for every $\ambda$ in $\llambda$.
This in turn induces an isomorphism
$$\ohg:E/Z\to (E\otimes\ambda)/Z\cong E/Z.$$

Define a holomorphic map
$$f:E\to \fun{\llambda}{G/Z};\,e\mapsto \fg(e)$$
in such a way that $\ohg(\overline e)=\overline e \fg(e)$ for each $e\in E$, where $\overline e$ is its image in $E/Z$ and $\fun{\llambda}{G/Z}$ is the group of maps from $\llambda$ to $G/Z$. A calculation shows that 
\begin{equation}\label{eq-conjugacy-f}
    f(e g)=g^{-1}f(e)g
\end{equation}
for every $g\in G$ and $e\in E$, where we are identifying elements in $G$ with constant functions. 

\begin{lemma}\label{lemma-f-Z}
For every $e\in E$, $f(e)\in \Hom(\llambda,G/Z)$.
\end{lemma}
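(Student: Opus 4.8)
\emph{Proof plan.} The goal is to show that the function $f(e)\colon\Lambda\to G/Z$ is a group homomorphism for every $e$. Since $f$ is pulled back along $E\to E/Z$, I may regard $e$ as a point of the principal $G/Z$-bundle $E/Z$. The plan is first to promote the individual isomorphisms $\overline{h}_\lambda$ to a multiplicative family of automorphisms of $E/Z$, and then to transcribe this multiplicativity into the desired identity for $f(e)$ by means of the conjugation relation (\ref{eq-conjugacy-f}).

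First I would observe that, although $h_\lambda\colon E\to E\otimes\lambda$ is determined only up to an automorphism of $E$, the induced map $\overline{h}_\lambda\colon E/Z\to E/Z$ is canonical. Indeed, $E$ being simple means $\Aut(E)=Z$ (Definition \ref{definition-stability}), so any two choices of $h_\lambda$ differ by the action of a central element, which becomes the identity on $E/Z$; hence $\overline{h}_\lambda$ depends only on $\lambda$. Next I would establish the multiplicativity $\overline{h}_{\lambda\lambda'}=\overline{h}_{\lambda'}\circ\overline{h}_\lambda$ in $\Aut(E/Z)$. To do this, compose $h_\lambda\colon E\to E\otimes\lambda$ with $h_{\lambda'}\otimes\id_\lambda\colon E\otimes\lambda\to(E\otimes\lambda')\otimes\lambda$ and with the canonical identification $(E\otimes\lambda')\otimes\lambda\cong E\otimes(\lambda\lambda')$; the result is an isomorphism $E\to E\otimes(\lambda\lambda')$, which by simplicity agrees with $h_{\lambda\lambda'}$ up to a central automorphism and therefore induces $\overline{h}_{\lambda\lambda'}$ on $E/Z$. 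Passing to $E/Z$ and using that the canonical isomorphism $(B\otimes\lambda)/Z\cong B/Z$ is natural in $B$, so that $h_{\lambda'}\otimes\id_\lambda$ descends to $\overline{h}_{\lambda'}$ under the relevant identifications, this composite becomes exactly $\overline{h}_{\lambda'}\circ\overline{h}_\lambda$.

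I expect this middle step to be the main obstacle: one must carefully track the canonical identifications $(E\otimes\lambda)/Z\cong E/Z$ together with the associativity isomorphism for the tensor action, in order to confirm that the composite genuinely descends to $\overline{h}_{\lambda'}\circ\overline{h}_\lambda$ rather than to some twisted variant. Once this naturality is in hand, the rest is a direct substitution, which I would carry out as follows. Writing $e'=\overline{h}_\lambda(e)=e\,f_\lambda(e)$ and applying $\overline{h}_{\lambda'}$, the identity (\ref{eq-conjugacy-f}) in the form $f_{\lambda'}(eg)=g^{-1}f_{\lambda'}(e)g$ with $g=f_\lambda(e)$ gives
$$\overline{h}_{\lambda'}\bigl(\overline{h}_\lambda(e)\bigr)=e'\,f_{\lambda'}(e')=e\,f_\lambda(e)\,f_\lambda(e)^{-1}f_{\lambda'}(e)f_\lambda(e)=e\,f_{\lambda'}(e)\,f_\lambda(e).$$
Comparing with $\overline{h}_{\lambda\lambda'}(e)=e\,f_{\lambda\lambda'}(e)$ and using that the $G/Z$-action on $E/Z$ is free yields $f(e)(\lambda\lambda')=f(e)(\lambda')\,f(e)(\lambda)$. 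Since $\Lambda$ is abelian we have $\lambda\lambda'=\lambda'\lambda$, so this is precisely the homomorphism property of $f(e)\colon\Lambda\to G/Z$, proving that $f(e)\in\Hom(\Lambda,G/Z)$.
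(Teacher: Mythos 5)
Your proposal is correct and follows essentially the same route as the paper: use simplicity of $E$ to deduce that the maps $\oh_{\lambda}$ induced on $E/Z$ compose multiplicatively, then evaluate the composition at $e$ and cancel to obtain the homomorphism property of $f(e)$. The only cosmetic differences are that you compose in the opposite order (so you use the relation (\ref{eq-conjugacy-f}) and the commutativity of $\Lambda$ where the paper uses $G/Z$-equivariance of $\oh_{\lambda}$ directly), and that you make explicit the naturality of the identifications $(E\otimes\lambda)/Z\cong E/Z$ and the independence of $\oh_{\lambda}$ from the choice of $h_{\lambda}$, which the paper uses implicitly.
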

\begin{proof}
If $\ambda$ and $\ambda'$ are elements of $\llambda$, the isomorphism $\hg$ induces an isomorphism 
$E \otimes\ambda'\to  E \otimes\ambda\ambda'$
which we also call $\hg$. Since $E$ is simple, $\ohg \oh_{\ambda'}=\oh_{\ambda\ambda'}$ and so
$$\overline ef_{\ambda\ambda'}(e)=\oh_{\ambda\ambda'}(\overline e)=\oh_{\ambda}\oh_{\ambda'}(\overline e)=\oh_{\ambda}(\overline ef_{\ambda'}(e))=\oh_{\ambda'}(\overline e)f_{\ambda'}(e)=\overline ef_{\ambda}(e)f_{\ambda'}(e),$$
where $e\in E$ and $\overline e$ is its image in $E/Z$, as required.
\end{proof}

Because of (\ref{eq-conjugacy-f}) and Lemma \ref{lemma-f-Z} we obtain an algebraic morphism
$X\to \x(\llambda,G/Z)\cong \x(\llambda,\Int(G))$
from a projective variety to an affine variety, which must be constant. Thus, we obtain a map
$\wf:H^1(X,\ug)_s^{\llambda}\to \x(\llambda,\Int(G))$
sending $E$ to the class of $f(e)$ for any $e\in E$. This is independent of the choice of $\hg$ because $E$ is simple. We conclude the following.

\begin{lemma}\label{lemma-orbit-fibre}
An element $\theta\in \Hom(\llambda,G/Z)$ is in the class $\wf(E)$ if and only if for every --- or for some --- $x\in X$, there exists $e$ in the fibre of $E$ over $x$ such that $f(e)=\theta$.
\end{lemma}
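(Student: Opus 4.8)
The plan is to reduce the whole statement to a single observation about the behaviour of $f$ along one fibre, combined with the constancy of the classifying map $X\to\x(\Lambda,\Int(G))$ established just before the lemma.

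First I record the easy implications. If there is $e$ in the fibre of $E$ over some $x$ with $f(e)=s$, then $s\in\Hom(\Lambda,G/Z)$ by Lemma \ref{lemma-f-Z}, and by the definition of $\wf$ the class of $f(e)=s$ is exactly $\wf(E)$; hence $s$ lies in the class $\wf(E)$. This already yields the implication from the `for some $x$' condition (and a fortiori from the `for every $x$' condition) to $s\in\wf(E)$.

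For the substantive content I analyse $f$ on a fixed fibre. Fix $x\in X$; the map $f$ factors through $E/Z$ and the projection $E_x\to (E/Z)_x$ is surjective, so I may work on the $G/Z$-torsor $(E/Z)_x$ and then lift. Picking $e_0\in (E/Z)_x$, every other point is of the form $e_0g$ with $g\in G/Z$, and (\ref{eq-conjugacy-f}) gives $f(e_0g)=g^{-1}f(e_0)g$; thus, as $e$ ranges over the fibre, $f(e)$ sweeps out precisely the $G/Z$-conjugacy orbit of $f(e_0)$. Because $\Lambda$ is finite, every homomorphism $\Lambda\to G/Z$ is a reductive representation, so its orbit is closed and the GIT quotient $\x(\Lambda,\Int(G))$ separates such orbits; consequently `$s$ is in the class $\wf(E)$' means exactly that $s$ is genuinely $G/Z$-conjugate to $f(e_0)$. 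Combining these facts, $s$ lies in the class $\wf(E)$ if and only if $s$ is in the orbit of $f(e_0)$, if and only if $f(e_0g)=s$ for a suitable $g$, i.e.\ there is a point $e=e_0g$ in the fibre over $x$ with $f(e)=s$.

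It remains to pass between the `for every' and the `for some' formulations, and here the constancy of $X\to\x(\Lambda,\Int(G))$ enters: since this morphism from the projective curve $X$ to the affine variety $\x(\Lambda,\Int(G))$ is constant, the class $[f(e_x)]$ is the same point $\wf(E)$ for every $x$, so the orbit swept out by $f$ over each fibre is one and the same orbit. Hence the condition `there exists $e$ in the fibre over $x$ with $f(e)=s$' is independent of $x$, which makes the `for every $x$' and `for some $x$' versions equivalent and completes the argument. The only delicate point is the identification of equality of classes in the GIT quotient with honest conjugacy, which I handle via the reductiveness of representations of the finite group $\Lambda$; everything else is formal, following from (\ref{eq-conjugacy-f}) and transitivity of the torsor action on the fibres.
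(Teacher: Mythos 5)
Your proof is correct and takes essentially the same route as the paper, which states this lemma with no separate proof, as an immediate consequence of the preceding discussion: the conjugation formula (\ref{eq-conjugacy-f}) makes $f$ sweep out the full $\Int(G)$-orbit of $f(e_0)$ on each fibre, the projective-to-affine constancy argument makes the class independent of the point of $X$, and the identification of $\x(\Lambda,\Int(G))$ with the honest orbit space (via reductivity of representations of the finite group $\Lambda$, hence closedness of orbits) is already part of the paper's setup in Section \ref{section-simple-and-H}. Your write-up simply makes these implicit steps explicit, including the one genuinely delicate point (GIT classes versus actual conjugacy), and handles it the same way the paper does.
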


\subsection{Simple \texorpdfstring{$G$}{G}-bundles and fixed points}\label{section-fixed-isomorphism-classes}
We are now ready to describe the fixed points in the set of isomorphism classes of $G$-bundles under the $\llambda$-action, using arguments similar to those used in \cite{PR}. 

First note that, given a Lie group $G$, an automorphism $\theta:G\to G$ and a $G$-bundle $E$, we may define the extension of structure group $\theta(E)$ induced by $\theta$. Alternatively, this is the $G$-bundle with total space equal to $E$ and $G$-action given by $G\times E\ni(g,e)\mapsto e\theta^{-1}(g)$, where we have written the original $G$-action on $E$ by attaching elements of $G$ on the right. If $\theta=\Int_s$ for some $s\in G$ then the map 
$E\ni e\mapsto es$
induces an isomorphism of $G$-bundles $E\xrightarrow{\sim}\theta(E)$.

On the other hand, note that $\llambda$ can be thought of as an element of $H^1(X,\Hom(\llambda,Z))$ via the tautological association of $\llambda$ with an element in $\Hom(\llambda,H^1(X,Z))$ and the isomorphism $\Hom(\llambda,H^1(X,Z))\cong H^1(X,\Hom(\llambda,Z))$. This is induced by the natural isomorphism $\Hom(\llambda,\Hom(\pi_1(X),Z))\cong \Hom(\pi_1(X),\Hom(\llambda,Z))$ via the Abel--Jacobi Theorem, since the image of a homomorphism from $\llambda$ to $H^1(X,Z)$ consists entirely of flat $Z$-bundles.

\begin{proposition}\label{prop-reduction}
Let $ E $ be a $G$-bundle and $\theta\in\Hom(\llambda,\Int(G))$. With notation as in Section \ref{section-Gtheta}, assume that there is a $\gs$-bundle $F$ which is a reduction of structure group of $ E $ such that 
\begin{equation}\label{eq-c-theta}
    \ctt(F)=\llambda.
\end{equation}
Then $ E $ is isomorphic to $ E\otimes\ambda$ for every $\ambda\in\llambda$.
\end{proposition}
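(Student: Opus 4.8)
The plan is to factor the desired isomorphism through the automorphism $\theta_\lambda$ of $G$. Since $\theta_\lambda\in\Int(G)$, write $\theta_\lambda=\Int_{s_\lambda}$ for some $s_\lambda\in G$; by the remark recalled at the start of Section~\ref{section-fixed-isomorphism-classes}, the map $e\mapsto e s_\lambda$ exhibits an isomorphism of $G$-bundles $E\xrightarrow{\sim}\theta_\lambda(E)$, where $\theta_\lambda(E)$ denotes the extension of structure group of $E$ induced by $\theta_\lambda$. Thus it suffices to prove $\theta_\lambda(E)\cong E\otimes\lambda$ for every $\lambda\in\Lambda$, and the whole content of the proposition is then concentrated in relating the twist $\theta_\lambda(E)$ to the tensor factor $\lambda$ through the reduction $F$.

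The bridge between the two is the homomorphism $\cct\colon\gs\to\Hom(\Lambda,Z)$. Choosing a trivializing cover on which $F$ is given by transition functions $g_{ij}\colon U_{ij}\to\gs$, the bundle $E$ has the same transition functions viewed in $G$, while $\theta_\lambda(E)$ has transition functions $\theta_\lambda(g_{ij})=s_\lambda g_{ij}s_\lambda^{-1}$. By the very definition of $\cct$ one has, for $g\in\gs$,
$$\theta_\lambda(g)=\cct(g)(\lambda)\,g,\qquad \cct(g)(\lambda)\in Z,$$
so that $\theta_\lambda(g_{ij})=\cct(g_{ij})(\lambda)\,g_{ij}$ with $\cct(g_{ij})(\lambda)$ central. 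Writing $\mathrm{ev}_\lambda\colon\Hom(\Lambda,Z)\to Z$ for evaluation at $\lambda$ and letting $\mathrm{ev}_{\lambda,*}\ctt(F)$ be the $Z$-bundle obtained from $\ctt(F)$ by the associated extension of structure group, the cocycle $\cct(g_{ij})(\lambda)$ is exactly a cocycle for $\mathrm{ev}_{\lambda,*}\ctt(F)$. Since central factors commute past $g_{ij}$, these are also transition functions for $E\otimes\big(\mathrm{ev}_{\lambda,*}\ctt(F)\big)$, and therefore
$$\theta_\lambda(E)\cong E\otimes\big(\mathrm{ev}_{\lambda,*}\ctt(F)\big).$$
I emphasize that no adjustment of the cocycle is needed here: both sides are computed from the single cocycle $g_{ij}$, so the identity is forced term by term.

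It remains to identify $\mathrm{ev}_{\lambda,*}\ctt(F)$ with $\lambda$. By hypothesis $\ctt(F)=\Lambda$ in $H^1(X,\Hom(\Lambda,Z))$, and under the identification $H^1(X,\Hom(\Lambda,Z))\cong\Hom(\Lambda,H^1(X,Z))$ the class $\Lambda$ is the tautological inclusion $\Lambda\hookrightarrow H^1(X,Z)$. The point is that $\mathrm{ev}_{\lambda,*}$ corresponds, under this identification, to evaluation of a homomorphism at $\lambda$; applying it to the tautological inclusion returns $\lambda$ itself. Hence $\mathrm{ev}_{\lambda,*}\ctt(F)\cong\lambda$, and combining the three isomorphisms gives $E\cong\theta_\lambda(E)\cong E\otimes\lambda$. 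The one step requiring genuine care --- and the main obstacle --- is this last compatibility: one must check that the extension of structure group along $\mathrm{ev}_\lambda$ intertwines the isomorphism $H^1(X,\Hom(\Lambda,Z))\cong\Hom(\Lambda,H^1(X,Z))$ with the evaluation map $f\mapsto f(\lambda)$. This is formal, following from the naturality of $H^1(X,-)$ applied to $\mathrm{ev}_\lambda$ together with the explicit description of the tautological class, but it is precisely where the hypothesis $\ctt(F)=\Lambda$ is converted into the concrete equality of $Z$-bundles that powers the argument.
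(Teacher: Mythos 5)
Correct, and essentially the paper's own argument: both proofs rest on the cocycle identity $\theta_\lambda(g_{ij})=\cct(g_{ij})(\lambda)\,g_{ij}$ for $\gs$-valued transition functions of $F$, use the hypothesis $\ctt(F)=\Lambda$ to identify the central cocycle $\cct(g_{ij})(\lambda)=g_{ij}^{-1}\theta_\lambda(g_{ij})$ with a cocycle for $\lambda$, and absorb the inner twist via $e\mapsto e s_\lambda$. The paper merely packages this as choosing trivializations of $\lambda$ with $z_{ij}=g_{ij}^{-1}\theta_\lambda(g_{ij})$ and exhibiting $F\cong\theta_\lambda^{-1}(F\otimes\lambda)$ cocycle by cocycle, which is exactly your identification $\mathrm{ev}_{\lambda,*}\ctt(F)\cong\lambda$ made explicit.
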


\begin{proof}
It is enough to obtain an isomorphism
$$\hg:F\xrightarrow{\sim}\tg^{-1}(F\otimes\ambda).$$
This in turn induces an isomorphism from $E$ to the extension of structure group $\tg^{-1}(E\otimes \ambda)\cong \tg^{-1}(E)\otimes \ambda\cong E\otimes\ambda$ --- note that $\tg$ acts trivially on $H^1(X,Z)$, since it is an inner automorphism.

Fix $\ambda\in\llambda$ and choose an open cover $\{U_i\}_{i\in I}$ of $X$ which trivializes both $F$ and $\ambda$. Let $e_i$ and $z_i$ define local sections of $F$ and $\ambda$ respectively on $U_i$. We obtain transition functions
$$g_{ij}:U_i\cap U_j\to G,\,z_{ij}:U_i\cap U_j\to Z$$
satisfying $e_j=e_ig_{ij}$ and $z_j=z_iz_{ij}$. A set of local trivializations for $\tg^{-1}(F\otimes\ambda)$ is then $\{U_i,e_i\otimes z_i\}$ (with the $e_i\otimes z_i$'s regarded as local sections of $\tg^{-1}(F\otimes \ambda)$), and the corresponding transition functions are $\tg^{-1}(g_{ij}z_{ij}):U_i\cap U_j\to G.$
But, using (\ref{eq-c-theta}), we may assume that $z_{ij}=g_{ij}^{-1}\tg(g_{ij})$ and so
$\tg^{-1}(g_{ij}z_{ij})=\tg^{-1}(g_{ij}g_{ij}^{-1}\tg(g_{ij}))=g_{ij}.$
Hence, we may set $\hg(e_i):=e_i\otimes z_i$ and extend the isomorphism to the whole $F$ imposing that it respects the $\gs$-actions.
\end{proof}

\begin{proposition}\label{prop-simple-fixed-points}
Let $ E $ be a simple $G$-bundle over $X$ which is isomorphic to $ E\otimes\ambda$ for every $\ambda$ in $\llambda$. Then a homomorphism $\theta:\llambda\to\Int(G)$ is in $\wf(E)\in \x(\llambda,\Int(G))$ (definitions as in Section \ref{section-simple-and-H}) if and only if there exists a $\gs$-bundle $F$ which is a reduction of structure group of $ E $ and satisfies (\ref{eq-c-theta}).
For each $\theta\in\wf(E)$ such a reduction is unique.
\end{proposition}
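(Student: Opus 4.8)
The plan is to realize the required reduction concretely as $F=f^{-1}(\theta)$, where $f$ is the (holomorphic) map $E/Z\to\Hom(\Lambda,G/Z)$ of Section~\ref{section-simple-and-H}, and to transport the condition (\ref{eq-c-theta}) through local transition functions. Two preliminary observations organize everything. First, unwinding the definition of $\gs$ together with $\tg=\Int_{s_\lambda}$ for a lift $s_\lambda\in G$ of $\theta(\lambda)$, an element $g$ lies in $\gs$ exactly when its image $\overline g\in G/Z$ centralizes $\theta(\Lambda)$; combined with the conjugation law (\ref{eq-conjugacy-f}), which reads $f(eg)=\overline g^{-1}f(e)\,\overline g$, this shows that for fixed $\theta$ the fibres of $f^{-1}(\theta)$ are precisely single $\gs$-orbits, so $f^{-1}(\theta)$ is a $\gs$-reduction as soon as it is non-empty over each point. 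Second, $f$ itself does not depend on the chosen isomorphisms $\hg\colon E\to E\otimes\lambda$: two such choices differ by an element of $\Aut(E)=Z$ (here we use that $E$ is simple), which acts trivially on $E/Z$. This canonicity is what will make the uniqueness statement clean.

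The computational core is a single \emph{master relation}. Fixing a $\gs$-reduction $F$ with local sections $e_i$ and transition functions $g_{ij}\in\gs$, a trivialization $\zeta_i$ of $\lambda$ with transition functions $z^\lambda_{ij}\in Z$, and writing an isomorphism as $\hg(e_i)=(e_i\otimes\zeta_i)\,u_i^\lambda$ with $u_i^\lambda\colon U_i\to G$, the $G$-equivariance of $\hg$ together with compatibility on overlaps (after moving $z^\lambda_{ij}$ across the tensor product via the $Z$-identification $(ez,az^{-1})\sim(e,a)$) is equivalent to
\[
u_i^\lambda\,g_{ij}=g_{ij}\,z^\lambda_{ij}\,u_j^\lambda,
\qquad\text{while}\qquad f(e_i)(\lambda)=\overline{u_i^\lambda}\in G/Z .
\]
Both implications are read off this identity. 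For the converse direction I would start from a reduction $F$ with $\ctt(F)=\Lambda$; using the cohomological identification of Section~\ref{section-Gtheta}, the hypothesis lets me choose the $\zeta_i$ so that $z^\lambda_{ij}=g_{ij}^{-1}\tg(g_{ij})$, exactly as in the proof of Proposition~\ref{prop-reduction}. Then the constant choice $u_i^\lambda=s_\lambda$ solves the master relation, as both sides reduce to $s_\lambda g_{ij}$; hence $\hg(e_i):=(e_i\otimes\zeta_i)s_\lambda$ glues to a global isomorphism $E\to E\otimes\lambda$ with $f(e_i)=\theta$, and Lemma~\ref{lemma-orbit-fibre} gives $\theta\in\wf(E)$.

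For the direct direction, if $\theta\in\wf(E)$ then $F:=f^{-1}(\theta)$ is a $\gs$-reduction by the first paragraph and Lemma~\ref{lemma-orbit-fibre}. On local sections $f(e_i)=\theta$ forces $\overline{u_i^\lambda}=\theta(\lambda)$, so $u_i^\lambda=s_\lambda\zeta_i^\lambda$ with $\zeta_i^\lambda\colon U_i\to Z$, and the master relation yields $z^\lambda_{ij}=\bigl(g_{ij}^{-1}\tg(g_{ij})\bigr)\zeta_i^\lambda(\zeta_j^\lambda)^{-1}$. Thus the $Z$-valued cocycle $g_{ij}^{-1}\tg(g_{ij})=\cct(g_{ij})(\lambda)$ representing $\ctt(F)$ is cohomologous to the cocycle $z^\lambda_{ij}$ of $\lambda$ for every $\lambda$, which is exactly the assertion $\ctt(F)=\Lambda$ under $H^1(X,\Hom(\Lambda,Z))\cong\Hom(\Lambda,H^1(X,Z))$.

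Uniqueness then comes for free: by the canonicity of $f$, the computation of the converse direction applies verbatim to any $\gs$-reduction $F'$ with $\ctt(F')=\Lambda$ and shows $f|_{F'}=\theta$, i.e.\ $F'\subseteq f^{-1}(\theta)$; since both are $\gs$-reductions whose fibres are single $\gs$-orbits, $F'=f^{-1}(\theta)$. I expect the main obstacle to be bookkeeping rather than conceptual: keeping the four families $g_{ij}$, $z^\lambda_{ij}$, $\cct(g_{ij})$ and $u_i^\lambda$ straight, and---most delicately---checking that the cocycle $\lambda\mapsto g_{ij}^{-1}\tg(g_{ij})$ really is matched with the \emph{tautological} class $\Lambda$ under the correct identification $\Hom(\Lambda,H^1(X,Z))\cong H^1(X,\Hom(\Lambda,Z))$. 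A minor routine point to dispatch is that $f^{-1}(\theta)$, being the preimage of a point under a holomorphic map with $\gs$-orbit fibres, is genuinely a holomorphic $\gs$-subbundle of $E$.
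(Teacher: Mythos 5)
Your proposal is correct and takes essentially the same route as the paper's own proof: the reduction is realized as the preimage in $E$ of $f^{-1}(\theta)$ (the paper's (\ref{eq-definition-reduction-from-iso})), the equivalence with (\ref{eq-c-theta}) is checked by the same transition-function computation on overlaps (your master relation $u_i^\lambda g_{ij}=g_{ij}z^\lambda_{ij}u_j^\lambda$ is exactly the identity the paper manipulates), the converse invokes Proposition \ref{prop-reduction} just as the paper does, and uniqueness rests on the same two ingredients (simplicity making $f$ canonical, and the reduction being determined by $f$). The only differences are expository: you allow an arbitrary trivialization of $\lambda$ and track the resulting coboundary $\zeta_i^\lambda(\zeta_j^\lambda)^{-1}$, whereas the paper uses the sections $z_i=\zg(e_i)$ induced by the isomorphisms, which absorbs that coboundary.
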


\begin{proof}
Choose an element $\theta\in \wf(E)$ and let $s:\llambda\to G$ be a map such that $\theta=\Int_s$. For each $\ambda\in\llambda$ consider an isomorphism 
$\hg:E\to E\otimes\ambda.$
By Lemma \ref{lemma-orbit-fibre}, if we define $f$ as in Section \ref{section-simple-and-H}, $f^{-1}(\theta)$ has non-empty intersection with every fibre of $E$ over $X$. Define
\begin{equation}\label{eq-definition-reduction-from-iso}
   F:=\{e\in E:\hg(e)=e\sg\otimes\zg(e),\,\zg(e)\in Z\}=f^{-1}(\theta). 
\end{equation}
Then, given $e\in F$ and $g\in G$, the element $eg$ is also in $F$ if and only if
$$e\tg(g)\sg\otimes\zg(e)=(e\sg\otimes\zg(e)) g=\hg(e)g=\hg(eg)=eg\sg\otimes\zg(eg)$$
for every $\ambda\in\llambda$, or equivalently $\tg(g)g^{-1}\in Z$. This shows that $F$ is a reduction of structure group of $E$ to $\gs$. Moreover, the isomorphism
\begin{equation*}
    E\xrightarrow{\hg} E\otimes\ambda\xrightarrow{\sg^{-1}}\tg^{-1}(E)\otimes\ambda,
\end{equation*}
where the second map is multiplication by $\sg^{-1}$ on the right, restricts to the isomorphism
$$F\to\tg^{-1}(F)\otimes\ambda;\,e\mapsto e\otimes\zg(e)$$
for every $\ambda\in\llambda$. The uniqueness of the reduction follows from Proposition \ref{prop-reduction}, the simplicity of $E$ and the fact that the resulting isomorphisms completely determine the reduction by (\ref{eq-definition-reduction-from-iso}). To see why (\ref{eq-c-theta}) is true we fix $\ambda\in\llambda$ and use an open cover $\{U_i\}_{i\in I}$ of $X$ trivializing $F$ equipped with local sections $e_i$ on each $U_i$, so that $z_i:=\zg(e_i)$ is a set of local sections for $\ambda$. Setting $z_j=z_iz_{ij}$ and $e_j=e_ig_{ij}$, one has
$$(e_i\sg\otimes z_i)\tg^{-1}(g_{ij}) z_{ij}=(e_i\otimes z_i)g_{ij}\sg z_{ij}=e_j\sg\otimes z_j=\hg(e_j)=\hg(e_ig_{ij})=(e_i\sg\otimes z_i)g_{ij},$$
therefore
$z_{ij}=\tg(z_{ij})=\tg(g_{ij})g_{ij}^{-1}$
as required.

Now let $\theta':\llambda\to\Int(G)$ be another homomorphism and assume that there is a reduction of structure group $F'$ of $ E $ to $G_{\theta'}$ satisfying (\ref{eq-c-theta}). Proposition \ref{prop-reduction} provides isomorphisms
$$\hg': E \xrightarrow{\sim}\tg'^{-1}(E)\otimes\ambda\xrightarrow{\sg'}E\otimes\ambda,$$
where the second morphism is the action of some element $\sg'\in G$ such that $\Int_{\sg'}=\theta'$, and these induce isomorphisms
$$\ohg':E/Z\xrightarrow{\sim}E/Z.$$
By the proof of Proposition \ref{prop-reduction} there exists $e\in E/Z$ such that $\ohg'(e)=e\theta'$, where we regard $\theta'$ as the image of $s'$ in $\Hom(\llambda,G/Z)$. Thus, by Lemma \ref{lemma-orbit-fibre}, $\theta'\in\wf(E)$.
\end{proof}

\subsection{Fixed points in the moduli space}\label{section-fixed-moduli}

So far we have studied fixed points in the context of the set of isomorphism classes of $G$-bundles. Now we want to express these results in the context of moduli spaces.

\begin{proposition}\label{prop-polystability-extension-structure-group}
Let $\theta\in\homgh$. 
\begin{enumerate}
    \item If a $\gs$-bundle $ F $ is polystable, the $G$-bundle obtained by extension of structure group is also polystable.
    \item If $ E $ is a (semi, poly)stable $G$-bundle with a reduction of structure group to a $\gs$-bundle $ F $, then $ F $ is (semi, poly)stable.
    \item Given $g\in G$ and $\theta':=\Int_g\theta\Int_{g^{-1}}$, there is a canonical isomorphism between $\mdl(X,\gs)$ and $\mdl(X,G_{\theta'})$ making the following diagramme commute:
    \[\begin{tikzcd}
\mdl(X,\gs)\arrow{r}\arrow{d} &
\mdl(X,G)\\
\mdl(X,G_{\theta'})\arrow{ur}
\end{tikzcd},
\]
where the morphisms to $\mdl(X,G)$ are given by extension of structure group. It restricts to a diagramme for the moduli spaces $\mdl_{\llambda}(X,\gs)$ of polystable $\gs$-bundles satisfying (\ref{eq-c-theta}).
\end{enumerate}
\end{proposition}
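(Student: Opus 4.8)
The plan is to deduce all three parts from the Hermitian--Einstein characterisation of polystability (Theorem~\ref{th-ramanathan-narasimhan-seshadri}) together with the structural description of $\gs$ from Section~\ref{section-Gtheta}. Fix once and for all a maximal compact subgroup $K\le G$ preserved by $\theta(\Lambda)$; this exists because $\theta(\Lambda)\subset\Int(G)$ is finite, and after multiplying each $\theta_\lambda=\Int_{s_\lambda}$ by a central element we may assume $s_\lambda\in K$. Then $K_\theta:=K\cap\gs$ is a maximal compact subgroup of the reductive group $\gs$, with complexified Lie algebra $\lie g_\theta=\lie g^\theta=\{X:\Ad(s_\lambda)X=X\ \forall\lambda\}$, and the inclusion $\iota\colon\lie k_\theta\hookrightarrow\lie k$ is an isometry for the fixed pairing.

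For part~(1), let $F$ be a polystable $\gs$-bundle. By Theorem~\ref{th-ramanathan-narasimhan-seshadri} in its non-connected form (Section~\ref{section-principal-bundles-non-connected}) it carries a metric, i.e.\ a reduction to $K_\theta$, whose Chern connection has curvature $F_h=-i\,d(F)\,\omega$, where $d(F)$ lies in the $\pi_0(\gs)$-invariant part of $i\lie z(\lie k_\theta)$. Extending the structure group to $G$ turns this into a $K$-reduction of $E$ whose Chern connection has curvature $\iota_*F_h=-i\,\iota(d(F))\,\omega$, so it remains only to check that $\iota(d(F))$ is central in $\lie k$ and equals $d(E)$. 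The equality with $d(E)$ is formal, since both are computed by integrating the same curvature against the projection $\lie k\to\zk$. \emph{Centrality is the crux of the whole proposition, and the step I expect to give the most trouble.} It is exactly here that the passage from the Levi $\gt_0$ to the possibly larger group $\gs$ is used: an element of $\lie z(\lie k_\theta)$ is central in $\lie k_\theta$, hence lies in the centre $\lie z(\lie l)$ of the Levi $\lie l=\lie g^\theta$, and the extra components of $\gs$ (the image of $\pi_0(\gs)$ acting by $\Ad$) move the non-central directions of $\lie z(\lie l)$; thus imposing $\pi_0(\gs)$-invariance should force $\iota(d(F))\in\lie z(\lie g)\cap\lie k=\zk$. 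Once $F_h=-i\,d(E)\,\omega$ on $E$, Theorem~\ref{th-ramanathan-narasimhan-seshadri} gives polystability of $E$.

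For part~(2), I treat the (semi)stable cases through the stability inequalities of Section~\ref{section-principal-bundles-connected} and the polystable case through Theorem~\ref{th-ramanathan-narasimhan-seshadri}. Any holomorphic reduction of $F$ to a parabolic $P_{s'}\cap\gs$ with $s'\in i\lie k_\theta$ invariant under $\pi_0(\gs)$ induces, via $\iota$, a reduction of $E$ to $P_{s'}\subset G$; since $G$ is connected the only constraint on $s'$ for testing $E$ is $s'\in i\lie k$, which holds. The two degree defects $\deg F(\sigma',s')-\pair{d(F)}{s'}$ and $\deg E(\sigma,s')-\pair{d(E)}{s'}$ coincide, because the same curvature integral computes both degrees and $\pair{\iota(d(F))}{s'}=\pair{d(F)}{s'}$ by part~(1). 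Hence a reduction violating (semi)stability of $F$ would violate it for $E$, and strict inequalities descend likewise. For the polystable case, $E$ semistable already gives $F$ semistable; writing $\mathrm{gr}(F)$ for the associated graded of a Jordan--H\"older filtration of $F$, part~(1) shows $\mathrm{gr}(F)$ extends to a polystable $G$-bundle, which is the associated graded of the induced filtration of $E$. As $E$ is polystable it is $S$-equivalent to its own associated graded, forcing $F\cong\mathrm{gr}(F)$ to be polystable; equivalently, one restricts the Hermitian--Einstein metric of $E$ to the holomorphic reduction $F$ and invokes uniqueness.

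For part~(3), write $\theta_\lambda=\Int_{s_\lambda}$, so that $\theta'_\lambda=\Int_{g s_\lambda g^{-1}}$; a direct computation using the centrality of $Z$ gives $G_{\theta'}=g\,\gs\,g^{-1}$. Conjugation by $g$ is therefore an isomorphism $\gs\xrightarrow{\sim}G_{\theta'}$ and induces the asserted isomorphism $\mdl(X,\gs)\cong\mdl(X,G_{\theta'})$ by transport of structure group. The triangle commutes because the inclusion $G_{\theta'}\hookrightarrow G$ precomposed with this conjugation equals the inclusion $\gs\hookrightarrow G$ followed by $\Int_g$, and $\Int_g$ acts trivially on $\mdl(X,G)$: as recalled in Section~\ref{section-fixed-isomorphism-classes}, $e\mapsto eg$ is an isomorphism $E\xrightarrow{\sim}\Int_g(E)$. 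Finally, conjugation intertwines $\cct$ with $c_{\theta'}$ through the injection $\pi_0(\gs)\hookrightarrow\Hom(\Lambda,Z)$ coming from (\ref{eq-exact-seq-groups}), and since $Z$ is central this identification is the identity on $\Hom(\Lambda,Z)$; hence (\ref{eq-c-theta}) is preserved, and the isomorphism restricts to $\mdl_\Lambda(X,\gs)\cong\mdl_\Lambda(X,G_{\theta'})$ compatibly with the maps to $\mdl(X,G)$.
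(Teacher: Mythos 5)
Your overall strategy is the same as the paper's: part (1) via the Hermitian--Einstein characterisation of Theorem \ref{th-ramanathan-narasimhan-seshadri}, part (2) by transporting reductions and comparing the inequalities of Definition \ref{definition-stability}, part (3) by conjugation. Part (3) and the (semi)stable half of (2) reproduce the paper's argument essentially verbatim, including the two key observations that $e\mapsto eg$ trivialises $\Int_g$ on $\mdl(X,G)$ and that $\Int_g$ intertwines $\cct$ with $c_{\theta'}$ inside $\Hom(\Lambda,Z)$.

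The serious issue is the centrality step in (1), which you rightly single out as the crux (the paper's own proof passes over it with the parenthetical ``hence $d(F)=d(E)$''). Your proposed mechanism --- that invariance of $d(F)$ under $\pi_0(\gs)$ forces $d(F)\in i\zk$ --- cannot close the gap, because $\pi_0(\gs)$ can be trivial while $\gt\subsetneq G$. Concretely, take $G=\GL(3,\C)$, $\Lambda\cong\Z/2\Z$ and $\theta=\Int_{\mathrm{diag}(1,1,-1)}$: then $\gs=\gt=\GL(2,\C)\times\GL(1,\C)$ is connected, the invariance condition is vacuous, and the $\gs$-bundle $F=(F_1,F_2)$ with $F_1$ stable of rank $2$ and degree $1$ and $\deg F_2=0$ is polystable (stability for a product group decouples, since every parabolic is a product), while its extension $F_1\oplus F_2$ is not even semistable. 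So the claim you are trying to prove --- and with it statement (1) read in full generality --- is false without a supplementary hypothesis such as $Z(\gs)^0\subseteq Z$, i.e.\ that the $\pi_0(\gs)$-invariants of $\lie z(\lie k^{\theta})$ reduce to $\zk$. That hypothesis does hold for the homomorphisms $\theta$ the paper actually feeds into this proposition, namely those arising from a simple fixed point via Proposition \ref{prop-simple-fixed-points}: there $Z_G(\gs)$ embeds into $\Aut(E)=Z$ by constant gauge transformations, and $Z(\gs)^0\subseteq Z_G(\gs)$. Your intuition about the components of $\gs$ sweeping out the non-central directions is exactly what happens in those cases, but it must be imposed (or deduced from simplicity), not derived from $\pi_0$-invariance alone. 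Note also that your part (2) inherits the same gap through the asserted identity $\pair{d(E)}{s'}=\pair{d(F)}{s'}$.

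A second, smaller gap is your treatment of the polystable case of (2). Jordan--H\"older filtrations and $S$-equivalence for principal $\gs$-bundles are not developed in this framework, and the Hermitian--Einstein metric of $E$ has no a priori reason to be compatible with a given holomorphic reduction $F$ (such compatibility is essentially what is being proved), so neither of your two alternatives is justified as stated. The paper's route is more elementary and is the one you should use: if $\deg F(\sigma,s)=\pair{d(F)}{s}$ for some $s$ and $\sigma$, then equality also holds for the induced reduction $\widetilde{\sigma}$ of $E$, and the further holomorphic reduction of $E_{\widetilde{\sigma}}$ to the Levi $\widetilde L_s$ provided by polystability of $E$ restricts to a holomorphic reduction of $F_{\sigma}$ to $L_s$.
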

\begin{proof}
To prove (1) we use Theorem \ref{th-ramanathan-narasimhan-seshadri}: fix a maximal compact subgroup $K'$ of $\gs$ and consider a maximal compact subgroup $K$ of $G$ containing it, so that $K'=K\cap\gs$. Let $\omega$ be a volume form on $X$. Then, given a polystable $\gs$-bundle $F$ with extension $E$, there exists a reduction $h'\in \Omega^0(F/K')$ such that the corresponding Chern connection has curvature igual to $-2\pi i\mu(F)\omega$ by Theorem \ref{th-ramanathan-narasimhan-seshadri}. Using the inclusion $F/K'\subset E/K$ we obtain a reduction $h\in\Omega^0(E/K)$ such that the corresponding Chern connection has curvature equal to $-2\pi i\mu(F)\omega=-2\pi i\mu(E)\omega$ --- note that the curvature on $E$ is induced by the curvature on $F$, hence $\mu(F)=\mu(E)$ ---, which implies that $E$ is polystable by Theorem \ref{th-ramanathan-narasimhan-seshadri}.

The proof of (2) uses Definitions \ref{definition-stability-1} and \ref{definition-stability-2} directly: let $\gt_0$ be the connected component of the identity in $\gs$ --- or, equivalently, $\gt$ ---, and set $\gamtt:=\gs/\gt_0$. Given a $G$-bundle $E$, a holomorphic reduction $F$ to $\gs$, a $\gamtt$-invariant element $s\in i\lie k'$ and a holomorphic reduction $\sigma\in H^0(X,F(\gs/P_s))$, the element $s$ also defines a parabolic subbundle $\widetilde P_s$ of $G$ and $\sigma$ determines a reduction $\widetilde{\sigma}\in H^0(X,E(G/\widetilde P_s))$. It can be seen that $\deg F(\sigma,s)=\deg E(\widetilde{\sigma},s)$ and $\mu(F)=\mu(E)$. A further holomorphic reduction of $E_{\widetilde{\sigma}}$ to the Levi subgroup $\widetilde L_s$ of $\widetilde P_s$ restricts to a holomorphic reduction of $F_{\sigma}$ to the Levi subgroup $L_s$ of $P_s$.

The isomorphism in (3) is given as follows: consider a $G$-bundle $ E $, a homomorphism $\theta\in\Hom(\llambda,\Int(G))$ and a reduction of structure group $F$ to $\gs$. Let $g\in G$ and $\theta':=\Int_{g}\theta \Int_{g^{-1}}$. Then $F':=Fg^{-1}\subseteq E$ is a reduction of structure group to $\Int_{g}(\gs)=G_{\theta'}$. Moreover, $g^{-1}$ induces an isomorphism from $F/\gt$ to $F'/G^{\theta'}$ and so $\ctt(F)\cong\ctt(F')$.
\end{proof}

Given $\theta\in\Hom(\llambda,\Int(G))$, we denote by $\widetilde{\mdl}_{\llambda}(X,\gs)$ the image of $\mdl_{\llambda}(X,\gs)$ in $\mdl(X,G)$. By Proposition \ref{prop-polystability-extension-structure-group}, if $\theta'=\Int_g\theta\Int_{g^{-1}}$ for some $g\in G$, then
$\widetilde{\mdl}_{\llambda}(X,\gs)=\widetilde{\mdl}_{\llambda}(X,G_{\theta'}).$

Let $\mdl(X,G)^{\llambda}$ be the fixed point locus of $\mdl(X,G)$ under the action of $\llambda$, and let $\mdl_*(X,G)^{\llambda}$ be its intersection with the stable and simple locus. Given a homomorphism $\theta:\llambda\to\Int(G)$, let $[\theta]$ be its class in the character variety $\x(\llambda,\Int(G))$, which is the quotient of $\Hom(\llambda,\Int(G))$ by the conjugation action of $\Int(G)$. Combining Propositions \ref{prop-reduction} and \ref{prop-simple-fixed-points}, we obtain  the following.

\begin{theorem}\label{th-fixed-points-oscar-ramanan}
The inclusions
    $$\bigcup_{[\theta]\in \x(\llambda,\Int(G))}\widetilde{\mdl}_{\llambda}(X,G_{\theta})\subset\mdl(X,G)^{\llambda}$$
and    
    $$\mdl_*(X,G)^{\llambda}\subset\bigcup_{[\theta]\in \x(\llambda,\Int(G))}\widetilde{\mdl}_{\llambda}(X,G_{\theta})$$
hold.
Moreover, the intersections 
$$\mdl_*(X,G)\cap\widetilde{\mdl}_{\llambda}(X,G_{\theta})=\mdl_*(X,G)^{\llambda}\cap\widetilde{\mdl}_{\llambda}(X,G_{\theta})$$
are disjoint for different $[\theta]\in \x(\llambda,\Int(G))$.
\end{theorem}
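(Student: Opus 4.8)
The plan is to read off the theorem from Propositions \ref{prop-reduction}, \ref{prop-simple-fixed-points} and \ref{prop-polystability-extension-structure-group}: I would establish the two inclusions first and then deduce the equality and disjointness of the intersections from them. Throughout I use that a point of $\widetilde{\mdl}_{\Lambda}(X,\gs)$ is, by construction, the class of the $G$-bundle $E$ obtained by extension of structure group from some polystable $\gs$-bundle $F$ with $\ctt(F)=\Lambda$.

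For the first inclusion I would start from such a point. Proposition \ref{prop-polystability-extension-structure-group}(1) guarantees that the extension $E$ is polystable, so $[E]$ genuinely lies in $\mdl(X,G)$, and Proposition \ref{prop-reduction} applied to the reduction $F$ shows $E\cong E\otimes\lambda$ for every $\lambda\in\Lambda$. Hence $[E]\in\mdl(X,G)^{\Lambda}$, giving the first inclusion. For the second inclusion I would take a simple stable $E$ with $[E]\in\mdl_{ss}(X,G)^{\Lambda}$. Being a fixed point means $E\cong E\otimes\lambda$ for all $\lambda$, so Proposition \ref{prop-simple-fixed-points} applies: there is a homomorphism $\theta\in\wf(E)$ together with a $\gs$-bundle $F$ reducing $E$ and satisfying $\ctt(F)=\Lambda$. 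Since $E$ is stable, hence polystable, Proposition \ref{prop-polystability-extension-structure-group}(2) shows $F$ is polystable, so $[F]\in\mdl_{\Lambda}(X,\gs)$ and its image is exactly $[E]$. Thus $[E]\in\widetilde{\mdl}_{\Lambda}(X,\gs)$ with $[\theta]\in\x(\Lambda,\Int(G))$, which is the second inclusion.

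Finally, for the intersections I would argue as follows. The containment $\mdl_{ss}(X,G)^{\Lambda}\cap\widetilde{\mdl}_{\Lambda}(X,\gs)\subseteq\mdl_{ss}(X,G)\cap\widetilde{\mdl}_{\Lambda}(X,\gs)$ is trivial, while the reverse follows from the first inclusion: any simple stable $E$ lying in the image $\widetilde{\mdl}_{\Lambda}(X,\gs)$ is automatically a fixed point, hence lies in $\mdl_{ss}(X,G)^{\Lambda}$. For disjointness, suppose a simple stable $E$ lies in both $\widetilde{\mdl}_{\Lambda}(X,\gs)$ and $\widetilde{\mdl}_{\Lambda}(X,G_{\theta'})$. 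Each membership yields a polystable reduction of $E$, to $\gs$ and to $G_{\theta'}$ respectively, satisfying $\ctt=\Lambda$, so by the forward direction of Proposition \ref{prop-simple-fixed-points} both $\theta$ and $\theta'$ lie in $\wf(E)$.

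The main obstacle, and the one place where simplicity is genuinely used, is precisely this last point: I must invoke that $\wf(E)$ is a \emph{single} well-defined class in $\x(\Lambda,\Int(G))$, independent of all the choices of isomorphisms $\hg$, as established in Section \ref{section-simple-and-H} via the rigidity of a morphism from the projective $X$ to the affine character variety. Granting this, $[\theta]=\wf(E)=[\theta']$, so the two components coincide, which forces the intersections attached to distinct classes $[\theta]\in\x(\Lambda,\Int(G))$ to be disjoint and completes the proof.
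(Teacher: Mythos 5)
Your proof is correct and takes essentially the same route as the paper, which derives the theorem by combining Propositions \ref{prop-reduction}, \ref{prop-simple-fixed-points} and \ref{prop-polystability-extension-structure-group} exactly as you spell out (the polystability proposition being implicit in the very definition of $\widetilde{\mdl}_{\Lambda}(X,\gs)$). Your closing observation that disjointness rests on $\wf(E)$ being a single well-defined class in $\x(\Lambda,\Int(G))$ for simple $E$ is precisely the role played by Section \ref{section-simple-and-H} in the paper's argument.
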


\subsection{The Prym--Narasimhan--Ramanan construction of fixed points}\label{section-prym-narasimhan-ramanan}

Let $X$ be a compact Riemann surface and let $G$ be a connected reductive complex Lie group with centre  $Z$. Let $\llambda$ be a finite subgroup of $H^1(X,Z)$ equipped with a homomorphism $\theta:\llambda\to\Int(G)$. Define $\gt$, $\gs$ and $\gamt:=\gs/\gt$ as in Section \ref{section-Gtheta}. The group $\gt$ is connected when $G$ is simply connected and the image of the homomorphism $\theta$ is cyclic (see \cite[Chapter 8]{steinberg-endomorphisms-of-linear-algebraic-groups}). However, it is not connected in general. There is an extension
\begin{equation}\label{eq-extension-connected-component}
    1\to \gt_0\to\gs\to\gamtt\to1,
\end{equation}
where $\gamtt$ is a finite group because $\gs$ is reductive. Of course there is a natural surjective homomorphism $\gamtt\to\gamt$. We denote by $\qt$ the composition 
$$\qt:H^1(X,\gamtt)\to H^1(X,\gamt)\hookrightarrow H^1(X,\Hom(\llambda,Z)),$$
where the second map is induced by $\ctt$.

Let $Z( \gt_0)$ be the centre  of $ \gt_0$. Each $\alpha\in H^1(X,\gamtt)$ defines an étale cover $\xaa$ with Galois group $\gam\le\gamtt$ which is a connected component of $\alpha$, and a subgroup $G_{\alpha}\le\gs$ which is the preimage of $\gam$ under the quotient $\gs\to\gamtt$. Choose a lift
$\taut:\gamtt\to\Aut( \gt_0)$
of the characteristic homomorphism of the extension (\ref{eq-extension-connected-component}).
By Proposition \ref{prop-extensions-isomorphic-twisted-group}, there exists a 2-cocycle $\ct\in Z^2_{\taut}(\gamtt,Z( \gt_0))$ such that
$\gs\cong  \gt_0\times_{(\taut,\ct)}\gamtt.$
On the other hand, a $\gs$-bundle $E$ over $X$ satisfies $\ctt(E)\cong{\llambda}$ if and only if $E/ \gt_0\cong\alpha$ for some element $\alpha\in H^1(X,\gamtt)$ such that $\qqt(\alpha)\cong\llambda$.
Hence, Proposition \ref{prop-prym-narasimhan-ramanan} yields the following.

\begin{theorem}\label{th-prym-narasimhan-ramanan}
Given $\alpha\in H^1(X,\gamtt)$ such that $\qqt(\alpha)\cong\llambda$, the category of $G_{\alpha}$-bundles $ E $ over $X$ satisfying
$E/ \gt_0\cong\alpha$
and the category of $(\taut,\ct)$-twisted $\gam$-equivariant $ \gt_0$-bundles on $\xaa$ with $Z(\gam)$-isomorphisms are equivalent. The stability notions are compatible and the equivalence of categories induces an isomorphism
\begin{equation}\label{eq-narasimhan-ramanan-gs}
    \bigsqcup_{\qqt(\alpha)=\llambda}\mdl(\xaa, \gt_0,\gam,\taut,\ct)/Z_{\gamtt}(\gam)\xrightarrow{\sim}\mdl_{\llambda}(X,\gs),
\end{equation}
where $Z_{\gamtt}(\gam)$ is the centralizer of $\gam$ in $\gamt$, which acts on $\mdl(\xaa,\taut,\ct, \gt_0,\gam)$ as explained in Proposition \ref{prop-action-centralizer}.
\end{theorem}

The promised Prym--Narasimhan--Ramanan construction is the combination of Theorems \ref{th-fixed-points-oscar-ramanan} and \ref{th-prym-narasimhan-ramanan}.

\section{Action of a cyclic finite subgroup of \texorpdfstring{$H^1(X,Z)$}{Z-bundles}}\label{section-examples}

\subsection{Fixed points of the action of a finite order line bundle}\label{section-example-generalise-narasimhan}
The Prym--Narasimhan--Ramanan construction given by Theorems \ref{th-fixed-points-oscar-ramanan} and \ref{th-prym-narasimhan-ramanan} may be applied to the case $G=\GL(n,\C)$ and $\llambda$ cyclic to recover the construction given by Narasimhan--Ramanan in \cite{narasimhan-ramanan}, via our formulation in principal bundle terms.

Let $L$ be a line bundle over $X$ with order $r$. In particular $L$ is flat. Such a line bundle may also be seen as an element of $H^1(X,\Z/r\Z)$, since by the Abel--Jacobi Theorem $L$ corresponds to a representation of $\pi_1(X)$ in $U(1)$ whose image must be in $\Z/r\Z$. We may assume that $n=rm$ is divisible by $r$. Indeed, if a fixed point $F$ exists then $F\cong F\otimes L$, and taking determinants on both sides we find that $L^n$ is trivial. Let $\llambda<H^1(X,\Z/r\Z)$ be the subgroup generated by $L$.
We want to identify the fixed points of the action of $\llambda$ in $\mdl(X,\GL(n,\C))$. 

Every element in $\Hom(\Z/r\Z,\Int(\GL(n,\C)))$ is determined by the image of the generator of $\Z/r\Z$. Moreover, every element in $\GL(n,\C)$ with finite order is diagonalizable and so every class in $\Hom(\Z/r\Z,\Int(\GL(n,\C)))/\Int(\GL(n,\C))$ is represented by an automorphism of the form $\theta:=\Int_D$, where
$$D:=\begin{pmatrix}
    I_{p_1}&0&\ldots&0\\
    0&\zeta I_{p_2}&\ldots&0\\
    \vdots&\vdots&\ddots&\vdots\\
    0&0&\ldots&\zeta^{r-1}I_{p_{r}}
    \end{pmatrix},$$
$\zeta$ is a primitive root of unity, $I_{p_i}$ is the identity matrix of size $p_i\times p_i$ and $p_1+\ldots+p_{r}=n$. The group of fixed points $\GL(n,\C)^{\theta}$ under this automorphism is equal to 
$\GL(p_1,\C)\times\GL(p_2,\C)\times\ldots\times\GL(p_{r},\C).$

In order to define the group $\GL(n,\C)_{\theta}$ as defined in Section \ref{section-Gtheta}, note that the group multiplication $\Z/r\Z\times\Z/r\Z\to\Z/r\Z$ induces an inclusion of $\Z/r\Z$ in the group of permutations of the eigenvalues of $D$, which can be regarded as elements of $\Z/r\Z$ by looking at the corresponding powers of $\zeta$. Let $A$ be a matrix permuting the eigenspaces of $D$, in the sense that it sends each eigenspace to the one obtained by permuting it. We denote by $p(A)$ the corresponding permutation of the eigenvalues of $D$. We then call $A$ a \textbf{permutation matrix} if $p(A)\in\Z/r\Z$. The group $\GL(n,\C)_{\theta}$ is generated by $\GL(n,\C)^{\theta}$ and a set of permutation matrices, one for each element in $\Z/r\Z$ such that the eigenspaces in each orbit have the same dimension. On the other hand, we know from Theorem \ref{th-fixed-points-oscar-ramanan} that the variety of fixed points $\mdl_*(X,\GL(n,\C))^{\llambda}$ is empty unless the homomorphism
$\cct:\GL(n,\C)_{\theta}\to\Z/r\Z$
is surjective. This happens if and only if all the eigenspaces have the same dimension, i.e. $p_1=\dots=p_r=m.$ We assume this from now on. 

Thus, $\GL(n,\C)_{\theta}$ is a $\Z/r\Z$-extension of $\GL(n,\C)^{\theta}$ generated by $\GL(n,\C)^{\theta}$ and the permutation matrix
$$S:=\begin{pmatrix}
    0&I_m&0&\ldots&0\\
    0&0&I_m&\ldots&0\\
    \vdots&\vdots&\ddots&\ddots&\vdots\\
    0&0&0&\ldots&I_m\\
    I_m&0&0&\ldots&0\\
    \end{pmatrix}.$$
Since $S^d=1$, the map 
\begin{equation*}
    \Z/r\Z\to\GL(n,\C);\,k\mapsto S^k
\end{equation*}
is a homomorphism and so the 2-cocycle given by Proposition \ref{prop-extensions-isomorphic-twisted-group} --- replacing $G$ with $\GL(n,\C)_{\theta}$ and $G_0$ with $\GL(n,\C)^{\theta}$ --- is trivial. Thus we obtain an isomorphism
$$\GL(n,\C)_{\theta}\cong \GL(n,\C)^{\theta} \rtimes_{\Int_S}\Z/r\Z,$$
where by abuse of notation we are writing the image of the generator instead of the whole homomorphism $\Z/r\Z\to\Aut(\GL(n,\C)^{\theta})$. Note that the semidirect product $\GL(n,\C)^{\theta} \rtimes_{\Int_S}\Z/r\Z$ is equal to the twisted product $\GL(n,\C)^{\theta} \times_{(\Int_S,1)}\Z/r\Z$ given by Definition \ref{def-twisted-product}, where $1$ denotes the trivial 2-cocycle of $Z^2(\Z/r\Z,Z(\GL(n,\C)^{\theta}))$.

Let 
$p_L:X_L\to X$
be the Galois $r$-cover of $X$ defined by $L$ and let $\zeta$ be a generator of its Galois group $\Gamma_L$, which is isomorphic to $\Z/r\Z$. According to Theorem \ref{th-fixed-points-oscar-ramanan}, smooth fixed points correspond to $(\Int_S,1)$-twisted $\Z/r\Z$-equivariant $\GL(n,\C)^{\theta}$-bundles $F$ over $X_L$. The corresponding vector bundle $F(\C^n)$ is a direct sum of vector bundles of rank $m$. By Proposition \ref{prop-associated-bundle-equivariant}, there is an induced $\Z/r\Z$-equivariant action on $F(\C^n)$, such that $\zeta\in\Z/r\Z$ sends $(e,v)\in F(\C^n)$ to $(e\cdot \zeta,S^{-1}v)$. This action swaps the summands inside $F(\C^n)$, so that
\begin{equation*}
    F(\C^n)\cong V\oplus\zeta^*V\oplus\dots \oplus\zeta^{*r-1}V,
\end{equation*}
where $V\to X_L$ is a vector bundle of rank $m$. Conversely, given such a vector bundle, the bundle of frames $P$ of $V\oplus\dots \oplus\zeta^{*r-1}V$ has a reduction of structure group to $\glt$, which we denote by $F$, given by the local frames which are unions of local frames on each summand. The induced $(\Int_S,1)$-twisted $\Z/r\Z$-equivariant action is such that $\zeta$ is associated to the automorphism $(\bullet *\zeta) S$ of $P$. Here $*$ denotes the action induced on the bundle of frames by the permutation action of $\Z/r\Z$ on $V\oplus\dots \oplus\zeta^{*r-1}V$. It is straightforward to see that $F*\zeta=FS^{-1}$, hence $F*\zeta S=F$.

Let $E$ be the $\gls$-bundle over $X$ which corresponds to $F$ via Proposition \ref{prop-twisted-equivariant-bundles-one-to-one}. By Proposition \ref{prop-associated-bundle-equivariant}, the vector bundle corresponding to $E$ is
$$E(\C^n)=F(\C^n)/(\Z/r\Z)=V\oplus\dots \oplus\zeta^{*r-1}V/(\Z/r\Z)\cong p_{L*}V.$$

In summary, we obtain the following.

\begin{proposition}[Narasimhan--Ramanan]
The inclusions
$$p_{L*}{M}(X_L,\GL(m,\C))\subset M_*(X,\GL(n,\C))^{\llambda}$$
and
$$M_*(X,\GL(n,\C))^{\llambda}\subset p_{L*}{M}(X_L,\GL(m,\C))$$
hold.
Here $M_*(X,\GL(n,\C))$ denotes the smooth fixed point locus of $M(X,\GL(n,\C))$.
\end{proposition}


\subsection{Action of a line bundle of order two \texorpdfstring{on $\mdl(X,\Sp(2n,\C))$}{for the symplectic group}}
Let $\Sp(2n,\C)$ be the symplectic group for the standard symplectic form 
\begin{equation*}\label{eq-def-J}
    J:=\begin{pmatrix}
    0 & I_n\\
    -I_n & 0
    \end{pmatrix}
\end{equation*}
on $\C^{2n}$. Note that any line bundle $L$ of order two over $X$ may be regarded as a principal bundle with structure group $\Z/2\Z$, which is the centre  of $\Sp(2n,\C)$, and so $L$ acts on $\mdl(X,\Sp(2n,\C))$. If $\llambda$ is the subgroup of $H^1(X,\Z/2\Z)$ generated by $L$, a homomorphism $\llambda\to\Int(\Sp(2n,\C))$ is determined by an inner automorphism $\theta$ of $\Sp(2n,\C)$ of order at most two. In order for the component corresponding to $\theta$ in the decomposition of Theorem \ref{th-fixed-points-oscar-ramanan} to be non-empty, it is necessary that $\Sp(2n,\C)_{\theta}\ne\Sp(2n,\C)^{\theta}$.
We know --- see \cite[Chapter X, Section 2.3]{helgason} --- that all the conjugacy classes of involutions of $\Sp(2n,\C)$ are represented by $\Int_J$ and $\Int_{K_{p,q}}$, where $p+q=n$ and
$$K_{p,q}:=i\begin{pmatrix}
    -I_p & 0 & 0 & 0\\
    0 & I_q & 0 & 0\\
    0 & 0 & -I_p & 0\\
    0 & 0 & 0 & I_q 
    \end{pmatrix}$$
($I_p$ is the identity matrix of dimension $p\times p$). As in Section \ref{section-example-generalise-narasimhan}, $\Sp(2n,\C)_{\theta}=\Sp(2n,\C)^{\theta}$ unless $\theta=\Int_J$ or $n$ is even and $\theta=\Int_{K_{n/2,n/2}}$. However, it can be seen that
$$
J=
\begin{pmatrix}
    I_n & -iI_n\\
    I_n & iI_n
\end{pmatrix}
\begin{pmatrix}
    iI_n & 0\\
    0 & -iI_n
\end{pmatrix}
\begin{pmatrix}
    I_n & -iI_n\\
    I_n & iI_n
\end{pmatrix}^{-1},
$$
which shows that $$
S:=
\begin{pmatrix}
    iI_n & 0\\
    0 & -iI_n
\end{pmatrix}
$$
is a conjugate of $J$ in $\Sp(2n,\C)$ (the matrix we are conjugating by is in $\Sp(2n,\C)$ up to a constant). Setting $\theta=\Int_S$, we find that $\Sp(2n,\C)^{\theta}\cong\GL(n,\C)$ is the subgroup of matrices of the form
$$
\begin{pmatrix}
    A & 0\\
    0 & A^{t-1}
\end{pmatrix},
$$
and $\Sp(2n,\C)_{\theta}$ is the subgroup of $\Sp(2n,\C)$ generated by $\Sp(2n,\C)^{\theta}$ and $J$. Since $J^2=-I$, we obtain an isomorphism
$$\Sp(2n,\C)_{\theta}\cong\Sp(2n,\C)^{\theta}\times_{(\Int_J,-1)}\Z/2\Z,$$
where we denote by $-1$ the 2-cocycle $c\in Z_{\Int_J}^2(\Z/2\Z,Z(\Sp(2n,\C)^{\theta}))$ such that $c(-1,-1)=-I_{2n}$ and by abuse of notation we have written the image $\Int_J$ of the generator of $\Z/2\Z$ instead of the whole homomorphism 
$\Z/2\Z\to\Aut(\Sp(2n,\C)^{\theta}).$

On the other hand, if $n$ is even, we set $\tau:=\Int_{K_{n/2,n/2}}$. Then $\Sp(2n,\C)^{\tau}\cong\Sp(n/2,\C)\times\Sp(n/2,\C)$ is the subgroup of matrices of the form
$$\begin{pmatrix}
    A & 0 & B & 0\\
    0 & N & 0 & P\\
    C & 0 & D & 0\\
    0 & Q & 0 & R
    \end{pmatrix},$$
where
$$
\begin{pmatrix}
    A & B\\
    C & D
\end{pmatrix}\andd
\begin{pmatrix}
    N & P\\
    Q & R
\end{pmatrix}
$$
are in $\Sp(n/2,\C)$. The group $\Sp(2n,\C)_{\tau}$ is generated by $\Sp(2n,\C)^{\tau}\cong\Sp(n/2,\C)\times\Sp(n/2,\C)$ and
$$T:=\begin{pmatrix}
    0 & I_{n/2} & 0 & 0\\
    I_{n/2} & 0 & 0 & 0\\
    0 & 0 & 0 & I_{n/2}\\
    0 & 0 & I_{n/2} & 0
    \end{pmatrix},$$
which has order two. Thus $\Sp(2n,\C)_{\tau}\cong\Sp(2n,\C)^{\tau}\rtimes_{\Int_T}\Z/2\Z=\Sp(2n,\C)^{\tau}\times_{(\Int_T,1)}\Z/2\Z$, where $1$ denotes the trivial 2-cocycle.

Let $X_L$ be the étale cover of $X$ associated to $L$. Consider the twisted action of the Galois group $\gal(X_L/X)\cong\Z/2\Z$ on $\C^{2n}$ such that the generator of $\Z/2\Z$ multiplies vectors by $J$ on the left. By Proposition \ref{prop-associated-bundle-equivariant} we may describe $(\Int_J,-1)$-twisted $\Z/2\Z$-equivariant $\Sp(2n,\C)$-bundles over $X_L$ in terms of vector bundles of rank $2n$ over $X_L$ which have the form $V\oplus V^*$, where $V$ is a vector bundle of rank $n$ over $X_L$ and the action of the generator $\zeta$ of the Galois group exchanges the two summands. Let $\psi:V\to \zeta^*V^*$ and $\psi':V^*\to \zeta^*V$ be the restrictions of the homomorphism induced by the action of $\zeta$ on $V\oplus V^*$. On the one hand, the equivariance of the $\Z/2\Z$-action implies that $\zeta^*(\psi)\psi'=1$. On the other, the fact that the action preserves the symplectic form implies
\begin{equation*}
    \langle v,v^*\rangle=\omega(v,v^*)=\omega(\psi(v),\psi'(v^*))=-\langle \psi'(v^*),\psi(v)\rangle=-\langle \psi'^{*}\psi(v),v^*\rangle
\end{equation*}
for each $v\in V$ and $v^*\in V^*$, where $\langle \bullet,\bullet\rangle$ denotes the natural pairing between $V$ and $V^*$ and $\omega$ is the corresponding symplectic form. Thus $\psi'=-\psi^{*-1}$, and we conclude that $\zeta^*\psi^*=-\psi$.
The action of the Galois group on $V\oplus V^*$ given by $\psi\oplus \psi'$ is induced by the natural permutation action on $V\oplus\zeta^*V$ using the isomorphism $\zeta^*\psi$ between $\zeta^*V$ and $V^*$.

Similarly, when $n$ is even, by Proposition \ref{prop-associated-bundle-equivariant} we know that $(\Int_T,1)$-twisted $\Z/2\Z$-equivariant $\Sp(n/2,\C)\times\Sp(n/2,\C)$-bundles over $X_L$ have associated vector bundles of the form $V\oplus V'$, where both $V$ and $V'$ are symplectic vector bundles. The corresponding equivariant action of the Galois group exchanges $V$ and $V'$, so that $V'\cong \zeta^*V$ and the symplectic form on $V'$ is just the pullback of the form on $V$.

Thus, according to Propositions \ref{prop-reduction} and \ref{prop-simple-fixed-points} and Theorem \ref{th-prym-narasimhan-ramanan}, we conclude the following.

\begin{proposition}
A simple symplectic bundle of rank $2n$ over $X$ whose isomorphism class is fixed by tensoring by $L$ is the pushforward of a vector bundle $V$ of rank $n$ over $X_L$, and the symplectic form is induced either by an isomorphism $f:V\cong \zeta^*V^*$ satisfying 
\begin{equation}\label{eq-symplectic-form}
    \zeta^*\psi^*=-\psi,
\end{equation}
or (only when $n$ is even) a symplectic form on $V$. Conversely, the pushforward of any vector bundle $V$ of rank $n$ over $X_L$ equipped with either a symplectic form or an isomorphism $f:V\cong \zeta^*V^*$ satisfying (\ref{eq-symplectic-form}) provides a fixed point.
\end{proposition}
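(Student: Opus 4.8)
The plan is to read the statement off as a direct application of Theorems~\ref{th-fixed-points-oscar-ramanan} and~\ref{th-prym-narasimhan-ramanan}, combined with the explicit description of the associated vector bundles carried out in the two paragraphs immediately above. For the direct implication, let $E$ be a simple $\Sp(2n,\C)$-bundle fixed by $L$, i.e. isomorphic to $E\otimes\lambda$ for every $\lambda\in\Lambda$. By Proposition~\ref{prop-simple-fixed-points}, for each $\theta\in\wf(E)$ there is a reduction of structure group to a $\gs$-bundle $F$ with $\ctt(F)=\Lambda$; in particular $\cct\colon\gs\to\Z/2\Z$ is surjective, so $\gs\ne\gt$. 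The classification of involutions of $\Sp(2n,\C)$ recalled above then forces $[\theta]$ to be the class of $\Int_S$ (always) or of $\Int_T$ (only when $n$ is even), which reduces everything to these two cases.

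In each case $\Sp(2n,\C)$ is simply connected and $\theta$ has cyclic image, so $\gt$ is connected and $\gt_0=\gt$, $\gamtt=\gamt=\Z/2\Z$. The condition $\qqt(\alpha)\cong\Lambda$ singles out the connected double cover $p_L\colon X_L\to X$ with Galois group $\Gamma=\Z/2\Z$, so Theorem~\ref{th-prym-narasimhan-ramanan} realizes $F$ as coming from a $(\taut,\ct)$-twisted $\Gamma$-equivariant $\gt_0$-bundle on $X_L$, with $(\taut,\ct)=(\Int_J,-1)$ or $(\Int_T,1)$ respectively.

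It remains to identify these twisted equivariant bundles with the stated vector bundle data, which is exactly the content of the computation already displayed via Remark~\ref{remark-associated-vector-bundle} and Proposition~\ref{prop-associated-bundle-equivariant}. In the $\Int_J$-case the associated rank-$2n$ bundle on $X_L$ is $E\oplus E^*$ with $\gamma$ exchanging the summands; equivariance ($\gamma^*(\psi)\psi'=1$) together with preservation of the symplectic form gives $\psi'=-\psi^{*-1}$, i.e. the sign condition~(\ref{eq-symplectic-form}). In the $\Int_T$-case the bundle is $E\oplus E'$ with both summands symplectic and $\gamma$ exchanging them, so $E'\cong\gamma^*E$ carries the pulled-back form, equivalently a single symplectic form on $E$. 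In both cases the extension of structure group to $\Sp(2n,\C)$ and the passage to $X$ of Proposition~\ref{prop-twisted-equivariant-bundles-one-to-one} amount to quotienting the rank-$2n$ bundle by the summand-permuting $\Gamma$-action, which is precisely the pushforward $p_{L*}E$ with its induced symplectic structure.

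The converse implication is then immediate from the first inclusion of Theorem~\ref{th-fixed-points-oscar-ramanan}: reversing the previous paragraph turns either piece of vector bundle data on $X_L$ into a $(\taut,\ct)$-twisted $\Gamma$-equivariant $\gt_0$-bundle, hence a polystable $\gs$-bundle $F$ with $\ctt(F)=\Lambda$, whose extension to $\Sp(2n,\C)$ lies in $\widetilde{\mdl}_\Lambda(X,\gs)\subset\mdl(X,\Sp(2n,\C))^\Lambda$. The only delicate point is the sign in~(\ref{eq-symplectic-form}): one must check that the symplectic pairing on $E\oplus E^*$, the equivariance relation $\gamma^*(\psi)\psi'=1$, and the twist cocycle value $c(-1,-1)=-I_{2n}$ conspire to produce exactly $\gamma^*(\psi^*)=-\psi$, and that this form descends under $p_{L*}$ rather than its negative.
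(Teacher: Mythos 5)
Your proposal is correct and takes essentially the same approach as the paper: Proposition \ref{prop-simple-fixed-points} together with the classification of inner involutions of $\Sp(2n,\C)$ to reduce to the two twisted products $(\Int_J,-1)$ and $(\Int_T,1)$, Theorem \ref{th-prym-narasimhan-ramanan} plus the associated-bundle computation to translate into the vector bundle data on $X_L$, and Proposition \ref{prop-reduction} (equivalently the first inclusion of Theorem \ref{th-fixed-points-oscar-ramanan}) for the converse. The sign check you flag as delicate is precisely the displayed computation yielding $\psi'=-\psi^{*-1}$, hence $\gamma^*(\psi^*)=-\psi$, so nothing is left open.
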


\subsection{Action of a \texorpdfstring{$Z(\Spin(n,\C))$-}{Z-}bundle of order two on \texorpdfstring{$M(X,\Spin(n,\C))$}{M(X,Spin(n,C))}-bundles.}\label{section-spin}
First we briefly recall the construction of the group $\Spin(n,\C))$ --- see \cite[Chapter 1]{lawson_spin_1989} for more details. Consider a complex vector space $V$ of dimension $n$ equipped with an orthogonal form $\omega$. The tensor algebra $T(V):=\bigoplus_kV^{\otimes k}$ has an ideal $I(V,\omega)$ generated by elements of the form $v\otimes v+\omega(v)$, where $v\in V$ and $\omega(v):=\omega(v,v)$, and the quotient is the Clifford algebra $\Cl(V,\omega):=T(V)/I(V,\omega)$. We define the group $\Pin(n,\C)$ to be the multiplicative group generated by elements $v\in V$ such that $\omega(v)=\pm 1$, and $\Spin(n,\C)<\Pin(n,\C)$ to be the subgroup of elements of even length. This may be realised as a degree two covering 
$$f:\Spin(n,\C)\to\SO(n,\C),$$
given by associating an element of $\Spin(n,\C)$ to its adjoint action on $V$. The fibre of the action of $\Ad_{v_1\dots v_{2k}}$ is $\pm v_1\dots v_{2k}$. We assume that $\omega$ is the standard orthogonal form on $V=\C^n$.

According to \cite[Chapter X, Section 2.3]{helgason}, the involutions of $\SO(n,\C)$ are given, up to conjugation by $\Int(G)$, by $\Int_{I_{p,q}}$ and, when $n$ is even, $\Int_{J}$, where 
\begin{equation}\label{eq-def-J-Ipq-Spin}
    J:=\begin{pmatrix}
    0 & I_{n/2}\\
    -I_{n/2} & 0
    \end{pmatrix},\quad
I_{p,q}:=
\begin{pmatrix}
    -I_p & 0\\
    0 & I_q
\end{pmatrix}\quad\text{and}\quad p+q=n.
\end{equation}
The only inner automorphisms of order two up to conjugation are $\Int_{I_{p,q}}$ for one of $p$ or $q$ even (and $p+q=n$), and $\Int_J$ for $n=4m$. This is of course also true for $\Spin(n,\C)$, since $\Int(\SO(n,\C))\cong \Int(\Spin(n,\C))$. From now on we assume without loss that $p$ is even. It can be checked that 
$$\SO(n,\C)^{\Int_{I_{p,q}}}=\text{S}(\ort(p,\C)\times\ort(q,\C))=\SO(p,\C)\times\SO(q,\C)\sqcup\ort^-(p,\C)\times\ort^-(q,\C),$$
where $\ort^-(p,\C)$ is the non-trivial coset of $\SO(p,\C)$ in $\ort(p,\C)$.

We split our analysis for $\Int_{I_{p,q}}$ in two cases: first consider that $p\ne q$. Then $\SO(n,\C)_{\Int_{I_{p,q}}}$ is equal to $\SO(n,\C)^{\Int_{I_{p,q}}}$. Therefore $\Spin(n,\C)_{\Int_{I_{p,q}}}=f^{-1}(\SO(n,\C)^{\Int_{I_{p,q}}})$ is an extension of $\Spin(n,\C)^{\Int_{I_{p,q}}}$ by at most $\Z/2\Z$. But $\Spin(n,\C)^{\Int_{I_{p,q}}}$ is connected, since $\Spin(n,\C)$ is simply connected, so that necessarily 
$$\Spin(n,\C)^{\Int_{I_{p,q}}}=f^{-1}(\SO(p,\C)\times\SO(q,\C))=\Spin(p,\C)\times\Spin(q,\C).$$ 
An element in $f^{-1}(\ort^-(p,\C)\times\ort^-(q,\C))$ with order two is $s_p:=iv_1v_{p+1}$, where $v_1\in\C^p$ and $v_{p+1}\in\C^q$ have norm 1. The adjoint action on $\Spin(p,\C)\times\Spin(q,\C)$ is determined by reflections on $v_1$ and $v_{p+1}$, and
$$s_p^2=-v_1v_{p+1}v_1v_{p+1}=v_1v_1v_{p+1}v_{p+1}=(-1)(-1)=1,$$
so that the 2-cocycle $c$ given by Proposition \ref{prop-extensions-isomorphic-twisted-group} is trivial.
Thus we obtain an isomorphism
$$\Spin(n,\C)_{\Int_{I_{p,q}}}\cong (\Spin(p,\C)\times\Spin(q,\C))\rtimes_{\Int_{s_p}}\Z/2\Z,$$
a semidirect product, where by abuse of notation we are writing the image of the generator instead of the whole homomorphism 
$\Z/2\Z\to\Aut(\Spin(p,\C)\times\Spin(q,\C)).$ Note that this semidirect product is equal to the twisted product $(\Spin(p,\C)\times\Spin(q,\C))\times_{(\Int_{s_p},1)}\Z/2\Z$ given by Definition \ref{def-twisted-product}, where 1 denotes the trivial 2-cocycle.

When $n=4m$ we obtain the remaining case $p=q=2m$, where $\SO(n,\C)_{\Int_{I_{2m,2m}}}$ is generated by $\SO(2m,\C)\times\SO(2m,\C)$ and $J$. In this case $\Spin(4m,\C)_{\Int_{I_{2m,2m}}}$ is generated by $\Spin(2m,\C)\times\Spin(2m,\C)$, one of the elements $J'\in f^{-1}(J)$ and $s_{2m}$. Since $J'^2=J^2=-1$, there is an isomorphism 
$$\Spin(4m,\C)_{\Int_{I_{2m,2m}}}\cong(\Spin(2m,\C)\times\Spin(2m,\C))\times_{(\tau,c)}(\Z/2\Z)^2,$$
with notation as follows: if the generators of $(\Z/2\Z)^2$ are $a$ and $b$ then $\tau_a=\Int_{J'}$, which exchanges the two factors, and $\tau_b=\Int_{s_{2m}}$. The 2-cocycle $c$ is trivial except for the pairs $(b,a)$, $(a,a)$, $(ba,b)$ and $(b,ab)$, which are mapped to $-1$.

Finally, when $n=4m$, the group $\SO(4m,\C)^{\Int_J}\cong\GL(2m,\C)$ is the subgroup of matrices of the form
\begin{equation}\label{eq-def-A-B-matrix}
    \begin{pmatrix}
    A & B\\
    -B & A
\end{pmatrix},
\end{equation}
where the isomorphism sends this matrix to $A+iB$. The extension $\SO(4m,\C)_{\Int_J}$ is generated by $\SO(4m,\C)^{\Int_J}$ and $I_{2m,2m}$, so that
$$\SO(4m,\C)_{\Int_J}\cong\GL(2m,\C)\rtimes_{\sigma}\Z/2\Z,$$
where $\sigma\in\Aut(\GL(2m,\C))$ consists of taking transpose and inverse. Indeed, note that the Lie algebra of $\SO(4m,\C)^{\Int_J}$ consists of matrices of the form (\ref{eq-def-A-B-matrix}) with $A$ antisymmetric and $B$ symmetric. The action of $\Ad_{I_{2m,2m}}$ sends this matrix to
\begin{equation*}
    \begin{pmatrix}
    A & -B\\
    B & A
\end{pmatrix},
\end{equation*}
which is mapped to $A-iB\in \gl(2m,\C)$, thus the induced automorphism of $\gl(2m,\C)$ sends $M$ to $-M^t$. Since this coincides with the differential of $\sigma$, the automorphism $\Int_{I_{2m,2m}}$ must induce $\sigma$.

The preimage $f^{-1}(\SO(4m,\C)^{\Int_J})\subset\Spin(4m,\C)$ is equal to $\Spin(n,\C)^{\Int_J}$, which is connected because $\Spin(4m,\C)$ is simply connected \cite{onishchik3}. Indeed, $\SO(4m,\C)$ is equal to $\Spin(4m,\C)/\{\pm1\}$ and, since $-1$ is in the centre of $\Spin(4m,\C)$, it is a fixed point of the inner automorphism $\Int_J$. Therefore $f^{-1}(\SO(4m,\C)^{J})$ is a connected 2-cover of $\GL(2m,\C)$, hence it is isomorphic to $\GL(2m,\C)$ itself and so
$$\Spin(4m,\C)_{\Int_J}\cong\GL(2m,\C)\rtimes_{\sigma}\Z/2\Z.$$

Summing up, we obtain the following.

\begin{proposition}\label{prop-spin}
Let $L$ be a $Z(\Spin(n,\C))$-bundle of order two over $X$, and let $X_L$ be the (connected) étale cover associated to it. Let $f:\Spin(n,\C)\to\SO(n,\C)$ be the natural 2-cover, $J$ the matrix given in (\ref{eq-def-J-Ipq-Spin}) and $J'\in \Spin(n,\C)$ one of the elements in $f^{-1}(J)$.
\begin{enumerate}
    \item If $n\ne 4m$ then the smooth fixed point locus is empty unless the monodromy of $L$ is $f^{-1}(1)=\{\pm1\}\subset\Spin(n,\C)$. In this case, with definitions as in Section \ref{section-prym-narasimhan-ramanan}, we obtain
    \begin{equation*}
        \bigcup_{p\;\text{even},\, p+q=n}\widetilde{\mdl}(X_L,\Spin(p,\C)\times\Spin(q,\C),\Z/2\Z,\Int_{s_p},1)\subset \mdl(X,\Spin(n,\C))^L
    \end{equation*}
    and
    \begin{equation*}
        \mdl_*(X,\Spin(n,\C))^L\subset\bigcup_{p\;\text{even},\, p+q=n}\widetilde{\mdl}(X_L,\Spin(p,\C)\times\Spin(q,\C),\Z/2\Z,\Int_{s_p},1),
    \end{equation*}
    where $s_p=iv_1v_{p+1}$, we denote the trivial 2-cocycle by 1 and we denote by $\Int_{s_p}$ the homomorphism $\Z/2\Z\to\Aut(\Spin(p,\C)\times\Spin(q,\C))$ such that the image of $-1$ is $\Int_{s_p}$ by abuse of notation.
    
    \item When $n=4m$ there are several possibilities depending on the image of the monodromy representation of $L$: if the monodromy group is $\Z/2\Z\times\Z/2\Z$, then
    \begin{equation*}
    \widetilde{\mdl}(X_L,\Spin(2m,\C)\times\Spin(2m,\C),\Z/2\Z\times\Z/2\Z,\tau,c)\subset \mdl(X,\Spin(n,\C))^L
    \end{equation*}
    and
    \begin{equation*}
        \mdl_*(X,\Spin(n,\C))^L\subset\widetilde{\mdl}(X_L,\Spin(2m,\C)\times\Spin(2m,\C),\Z/2\Z\times\Z/2\Z,\tau,c).
    \end{equation*}
    If the generators of $(\Z/2\Z)^2$ are $a$ and $b$ then $\tau_a=\Int_{J}$ and $\tau_b=\Int_{s_{2m}}$.
    Moreover, $c$ is the 2-cocycle in $Z^2_{\tau}(\Z/2\Z\times\Z/2\Z,Z(\Spin(2m,\C)\times\Spin(2m,\C)))$ which is equal to $-1$ at  $(b,a)$, $(a,a)$, $(ba,b)$ and $(b,ab)$ and trivial at every other pair.

    \item If $n=4m$ and the monodromy group of $L$ is a subgroup of order 2 in $\Z/2\Z\times\Z/2\Z$ whose image by $f$ is $\pm1\in\SO(n,\C)$, then
    \begin{align*}
    &\widetilde{\mdl}(X_L,\Spin(2m,\C)\times\Spin(2m,\C),\Z/2\Z,\Int_{M},-1)\cup\\ &\cup\widetilde{\mdl}(X_L,\GL(2m,\C),\Z/2\Z,\sigma,1)\subset \mdl(X,\Spin(n,\C))^L
    \end{align*}
    and
    \begin{align*}
    \mdl_*(X,\Spin(n,\C))^L\subset&\widetilde{\mdl}(X_L,\Spin(2m,\C)\times\Spin(2m,\C),\Z/2\Z,\Int_M,-1)\cup\\ &\cup\widetilde{\mdl}(X_L,\GL(2m,\C),\Z/2\Z,\sigma,1),
    \end{align*}
    where $M=J'$ or $J's_{2m}$ depending on the actual monodromy group and $\sigma\in\Aut(\GL(2m,\C))$ consists of taking transpose and inverse.

    \item If $n=4m$ and the image of the monodromy is $f^{-1}(1)=\pm1\in\Spin(4m,\C)$, then
    \begin{equation*}
        \bigcup_{p\;\text{even},\, p+q=n}\widetilde{\mdl}(X_L,\Spin(p,\C)\times\Spin(q,\C),\Z/2\Z,\Int_{s_p},1)\subset \mdl(X,\Spin(n,\C))^L
    \end{equation*}
    and
    \begin{equation*}
        \mdl_*(X,\Spin(n,\C))^L\subset\bigcup_{p\;\text{even},\, p+q=n}\widetilde{\mdl}(X_L,\Spin(p,\C)\times\Spin(q,\C),\Z/2\Z,\Int_{s_p},1).
    \end{equation*}
\end{enumerate}
\end{proposition}

\subsection{Action of a line bundle of order two on \texorpdfstring{$M(X,E_7)$}{M(X,E7)}}\label{section-e7}
Let $E_7$ be the simply connected group with exceptional Lie algebra $\lie e_7$. We briefly review the construction of $E_7$ --- for more details see \cite{exceptional}. Recall that the Cayley algebra $\mathfrak C$ is the $\R$-algebra generated by the group of octonions, which is equipped with a conjugation. The exceptional Jordan algebra $\mathfrak J$ consists of $3\times 3$ hermitian matrices over $\mathfrak C$, and we may construct its complexification $\mathfrak J^{\C}$ and define the Freudenthal vector space 
$$\mathfrak B^{\C}:=\mathfrak J^{\C}\oplus\mathfrak J^{\C}\oplus\C\oplus\C.$$
Given two elements $u$ and $v$ in $\mathfrak B^{\C}$, we may define a $\C$-linear mapping $u\times v:\mathfrak B^{\C}\to \mathfrak B^{\C}$. We define $E_7$ as the group of $\C$-linear automorphisms $f:\fr\to\fr$ such that 
$$f(u\times v)f^{-1}=f(u)\times f(v).$$
This has centre  $\{\pm 1\}\cong \Z/2\Z$.
Consider the elements in $E_7$
$$\iota:\fr\to\fr;\,(u,v,a,b)\mapsto (-iu,iv,-ia,ib)
$$
and
$$
\ambda:\fr\to\fr;\,(u,v,a,b)\mapsto (v,-u,b,-a).$$
They anticommute and their squares are both $-1$, so that the corresponding inner automorphisms have order 2. According to \cite{exceptional}, if $E_6$ is the simply connected group with Lie algebra $\mathfrak e_6$ then $E_7^{\Int_{\iota}}\cong (E_6\times\C^*)/(\Z/3\Z)$, where $\Z/3\Z\subset\C^*$ acts by simultaneous multiplication on both factors. The action of $\tau:=\Int_{\ambda}$ on $(E_6\times\C^*)/(\Z/3\Z)$ --- which preserves it --- is given by transposing and inverting the factor $E_6$ and inverting the factor $\C^*$, and
$$(E_7)_{\Int_\iota}\cong (E_6\times\C^*)/(\Z/3\Z)\times_{(\tau,-1)}\Z/2\Z.$$

Now let $L$ be a line bundle of order $2$ on $X$, which may be regarded as $\Z/2\Z$-bundle, and let $\llambda$ be the subgroup of $H^1(X,\Z/2\Z)$ generated by $L$. Let $X_L$ be the associated étale cover. By Theorems \ref{th-fixed-points-oscar-ramanan} and \ref{th-prym-narasimhan-ramanan}, the image of
$$M(X_L,(E_6\times\C^*)/(\Z/3\Z),\Z/2\Z,\tau,-1)$$
is contained in $M(X,E_7)^{\llambda}$ and its intersection with the smooth locus is a union of connected components of $M_*(X,E_7)^{\llambda}$.

\section{Action of a finite subgroup of the Jacobian on the moduli space of \texorpdfstring{$\GL(n,\C)$}{GL(n,C)}-bundles}\label{section-jacobian}

Let $X$ be a compact Riemann surface. Applying Theorems \ref{th-fixed-points-oscar-ramanan} and \ref{th-prym-narasimhan-ramanan} we find a description of the stable vector bundles of rank $n$ whose isomorphism class is fixed under the action of a finite subgroup $\llambda$ of the Jacobian $J(X)\cong H^1(X,\C^*)$. In other words, we give a description of $\mdl(X,\GL(n,\C))^{\llambda}$. We write $\llambda^*:=\Hom(\llambda,\C^*)$. We will show in Proposition \ref{prop-XGamma-connected} that this is the Galois group of the étale cover determined by $\xg$, which we denoted by $\gam$ in Section \ref{section-prym-narasimhan-ramanan}. Throughout this section we usually denote elements of $\llambda$ with the symbol $\lambda$ and elements of $\llambda^*$ with the symbol $\gamma$, in order to avoid any confusion.

\subsection{Fixed points for \texorpdfstring{$\llambda\cong\Z/2\Z\times\Z/2\Z$}{Gamma isomorphic to Z/2Z X Z/2Z}}

We apply the Prym--Narasimhan--Ramanan construction to the case $\llambda=\langle a,b\rangle\cong\Z/2\Z\times\Z/2\Z$ and $G=\GL(n,\C)$. This situation already features most of the ingredients appearing in the general case. We define $a_*$ and $b_*\in\llambda^*:=\Hom(\llambda,\C^*)$, such that $a_*$ ($b_*$) sends $a$ to $-1$ ($1$) and $b$ to $1$ ($-1$). 

First note that the inclusion of $\llambda$ in $J(X)=H^1(X,\C^*)$ provides a $\llambda^*$-bundle $\pg:\xg\to X$. This is is isomorphic to $p_a^*b$ and $p_b^*a$, where $p_a:a\to X$ and $p_b:b\to X$ are the bundle projections. In particular, it is connected.

Let $\theta:\llambda\to\Int(\GL(n,\C))$ be a homomorphism, and take a map $s:\llambda\to\GL(n,\C)$ such that $\Int_s=\theta$. Since $\llambda$ is abelian, the commutator of $\theta_{\lambda}$ and $\theta_{\lambda'}$ is trivial for any two elements $\lambda$ and $\lambda'$ in $\llambda$, and so the commutator of $s_{\lambda}$ and $s_{\lambda'}$ is a diagonal matrix, which we regard as an element of $\C^*$. Thus we obtain a homomorphism $$l:\llambda\to\llambda^*:=\Hom(\llambda,\C^*);\,\lambda\mapsto(\lambda'\mapsto \sg s_{\lambda'}\sg^{-1}s_{\lambda'}^{-1}).$$
This only depends on $\theta$ and is clearly antisymmetric, since any matrix commutes with itself. If $a$ and $b$ are line bundles of order two over $X$ generating $\llambda$, the pairing between $a$ and $b$ completely determines $l$. This may be either trivial or $-1$. 

\begin{proposition}\label{prop-two-generators-l-trivial}
    If $\theta:\llambda\to\Int(\GL(n,\C))$ is such that $l$ is trivial and $M_{\llambda}(X,\GL(n,\C)_{\theta})$ is non-empty, then $n/4$ is an integer and
    \begin{equation*}
        \wcM_{\llambda}(X,\GL(n,\C)_{\theta})=p_{\llambda*}M(X,\GL(n/4,\C)),
    \end{equation*}
    with notation as in Theorem \ref{th-fixed-points-oscar-ramanan}.
\end{proposition}
\begin{proof}
    If $l$ is trivial, $s_a$ and $s_b$ may be simultaneously diagonalizable, i.e. there exists $A\in\GL(n,\C)$ such that both $A\Int_{s_a}A^{-1}$ and $A\Int_{s_b}A^{-1}$. Hence, we may replace $\theta$ by $\Int_A\theta\Int_A^{-1}$ and assume that both $s_a$ and $s_b$ are both diagonal, without changing the class of $\theta$ in $\x(\llambda,\Int(\GL(n,\C))):=\Hom(\llambda,\Int(\GL(n,\C)))/\Int(\GL(n,\C))$. After rescaling them we may assume that they have order two. There are four weight spaces for the action of $s(\llambda)$ on $\C^n$, fitting in a decomposition 
\begin{equation*}
\C^n= W_{1,1}\oplus W_{1,-1}\oplus W_{-1,1}\oplus W_{-1,-1},
\end{equation*}
where $W_{i,j}$ is the subspace of $\C^n$ consisting of the vectors $v$ such that $s_av=iv$ and $s_bv=jv$. With notation as in Section \ref{section-Gtheta}, $\GL(n,\C)^{\theta}\cong \bigoplus_{i,j=\pm 1}\GL(W_{i,j})$. For each $(k,l)\in\Z/2\Z\times\Z/2\Z$, let $\GL(k,l)$ be the subset of $\GL(n,\C)$ consisting of the matrices that send $W_{i,j}$ to $W_{ik,jl}$ for every $(i,j)\in\Z/2\Z\times\Z/2\Z$. It is straightforward to see that $\GL(k,l)$ is non-empty if and only if $\dim W_{i,j}={ik,jl}$ for each $(i,j)\in\Z/2\Z\times\Z/2\Z$. Then $\GL(n,\C)_{\theta}=\sqcup_{k,l}\GL(k,l)$. 

The homomorphism $\cct:\GL(n,\C)_{\theta}\to\llambda^*$ defined in Section \ref{section-Gtheta} is equal to the map whose restriction to $\GL(k,l)$ is equal to the character of $\llambda$ sending $a$ and $b$ to $k$ and $l$, respectively. Therefore, $\cct$ is surjective if and only if the dimensions of the weight spaces $W_{i,j}$ are all the same. This may only happen if and only if $n$ is divisible by 4. Set $m:=n/4$, so that $W_{i,j}\cong \C^m$ for each $(i,j)\in\Z/2\Z\times\Z/2\Z$. In this case we may explicitly write
\begin{equation}\label{eq-def-sa-sb}
    s_a=\begin{pmatrix}
        I_n & 0\\
        0 & -I_n
    \end{pmatrix}
    \andd
    s_b=\begin{pmatrix}
        I_m & 0 & 0 & 0\\
        0 & -I_m & 0 & 0\\
       0 & 0 & I_m &  0\\
        0 & 0 & 0 & -I_m\\
    \end{pmatrix},
\end{equation}
and $\GL(n,\C)_{\theta}$ is generated by $(\GL(n,\C))^{\theta}\cong\GL(m,\C)^{\times 4}$ and the matrices 
\begin{equation}\label{eq-def-S1-S2}
    S_1:=\begin{pmatrix}
          0 & I_n\\
        I_n & 0  
    \end{pmatrix}
    \andd
    S_2:=\begin{pmatrix}
        0 & I_m & 0 & 0\\
        I_m & 0 & 0 & 0\\
        0 & 0 & 0 & I_m\\
        0 & 0 & I_m & 0\\
    \end{pmatrix}.
\end{equation}
Thus $\GL(n,\C)_{\theta}$ is isomorphic to the semidirect product $\GL(m,\C)^{\times 4}\rtimes_{\tau}\llambda^*$, where, for each $\gamma\in\llambda^*$, $\tau(\gamma)$ is the automorphism of $\GL(m,\C)^{\times 4}$ sending the copy of $\GL(m,\C)$ corresponding to the weight $\gamma'$ to the copy corresponding to the weight $\gamma\gamma'$. If $\llambda^*$ is regarded as a subset of $\GL(m,\C)^{\times 4}\rtimes_{\tau}\llambda^*$, the isomorphism sends $S_1$ ($S_2$) to $a_*$ ($b_*$).

By Theorem \ref{th-prym-narasimhan-ramanan} a $\GL(n,\C)_{\theta}$-bundle $E$ over $X$ such that $\ctt(E)\cong\llambda$ corresponds to a $(\tau,1)$-twisted $\llambda^*$-equivariant $\GL(m,\C)^{\times 4}$-bundle $(F,\cdot)$ over $\xg$. The associated vector bundle $F(\C^n)$ is a direct sum of four vector bundles of rank $m$. By Proposition \ref{prop-associated-bundle-equivariant}, it inherits a $\llambda^*$-equivariant action such that $\gamma\in\llambda^*$ sends $(e,v)\in F(\C^n)$ to $(e\cdot\gamma,\tau(\gamma)^{-1}v)$. Note that $\llambda^*$ permutes the summands of $F(\C^n)$ as it permutes the weight spaces of $\C^n$ via $\tau$. Therefore, 
\begin{equation}\label{eq-direct-sum}
    F(\C^n)\cong V\oplus a_*^*V\oplus b_*^*V\oplus a_*^*b_*^*V.
\end{equation}
where $V\to\xg$ is a vector bundle of rank $m$. The $\llambda^*$-equivariant action on $F(\C^n)$ is given by permuting the summands via pullback. Conversely, any vector bundle of the form (\ref{eq-direct-sum}) has a bundle of frames with a reduction of structure group to $\GL(m,\C)^{\times 4}$ with an induced $(\tau,1)$-twisted $\llambda^*$-equivariant action. The quotient $F(\C^n)/\llambda^*\to X$ is the pushforward $p_{\llambda*}V$. Hence, by Proposition \ref{prop-associated-bundle-equivariant}, the vector bundle $E(\C^n)$ associated to $E$ is equal to $p_{\llambda*}V$. Moreover, by an argument similar to the proof of \cite[Proposition 3.1]{narasimhan-ramanan}, $V$ is polystable if and only if $p_{\llambda*}V$ is polystable. We conclude that 
\begin{equation*}
    \wcM_{\llambda}(X,\GL(n,\C)_{\theta})= p_{\Gamma*}M(\xg,\GL(m,\C)).
\end{equation*}
\end{proof}

\begin{proposition}\label{prop-two-generators-l-non-trivial}
      Let $p_a:X_a\to X$ be the étale cover determined by $a$ and denote by $M(X,\GL(n/2,\C))^{b}$ the subvariety of vector bundles $W$ such that $W\cong a_*^*W\otimes p_a^*b$, where $a_*$ is the generator of $\gal(X_a/X)$.
      Then, if $\theta:\llambda\to\Int(\GL(n,\C))$ is such that $l$ is non-trivial and $M_{\llambda}(X,\GL(n,\C)_{\theta})$ is non-empty, then $n/2$ is an integer and
    \begin{equation*}
        \wcM_{\llambda}(X,\GL(n,\C)_{\theta})=p_{a*}M(X,\GL(n/2,\C))^{b}.
    \end{equation*}
\end{proposition}
\begin{proof}
    If the antisymmetric pairing $l$ determined by $\theta$ is non-trivial, in other words, the commutator of $a$ and $b$ is $-1$ ---, the matrices $s_a$ and $s_b$ are not simultaneously diagonalizable. We may still rescale them so thay they have order 2, though, and we may diagonalize $s_a$. The fact that $s_b$ anticommutes implies that it exchanges the weights spaces of $s_a$, hence these must have the same dimension. Hence $n$ must be divisible by 2 and
\begin{equation*}
    s_a=\begin{pmatrix}
        I_{n/2} & 0\\
        0 & -I_{n/2}
    \end{pmatrix}
    \andd
    s_b=\begin{pmatrix}
        0 & A\\
        A^{-1} & 0
    \end{pmatrix},
\end{equation*}
where $A$ is some element of $\GL(n/2,\C)$. If $C\in\GL(n/2,\C)$ is any matrix diagonalizing $A$, one has
\begin{equation*}
    \begin{pmatrix}
        C & 0\\
        0 & C
    \end{pmatrix}
    \begin{pmatrix}
        0 & A\\
        A^{-1} & 0
    \end{pmatrix}
    \begin{pmatrix}
        C^{-1} & 0\\
        0 & C^{-1}
    \end{pmatrix}=
        \begin{pmatrix}
        0 & CAC^{-1}\\
        CA^{-1}C^{-1} & 0
    \end{pmatrix},
\end{equation*}
which is diagonal. Moreover, $C\oplus C$ commutes with $s_a$ and so, after conjugating $\theta$ by $\Int_{C\oplus C}$ and rescaling $s_b$, we may assume that 
\begin{equation*}
    s_b=\begin{pmatrix}
        0 & I_{n/2}\\
        I_{n/2} & 0
    \end{pmatrix}.
\end{equation*}

Now it is straightforward to see that $\GL(n,\C)^{\theta}\cong \GL(n/2,\C)$ is the subgroup of $\GL(n,\C)$ consisting of matrices of the form $A\oplus A$, where $A\in \GL(n/2,\C)$. Moreover, $\GL(n/2,\C)_{\theta}$ is generated by $\GL(n,\C)^{\theta}$ and the matrices $s_a=I_{n/2}\oplus -I_{n/2}$ and $S_1$, defined by (\ref{eq-def-S1-S2}). Following the proof of Proposition \ref{prop-extensions-isomorphic-twisted-group}, consider the map $t:\llambda^*\to \GL(n,\C)_{\theta}$ sending $1$ to $I_n$, $a_*$ to $S_1$, $b_*$ to $s_a$ and $a_*b_*$ to $S_1s_a$. Let
\begin{equation*}
    c:\llambda^*\times\llambda^*\to Z(\GL(n,\C)^{\theta});\,
    (\gamma,\gamma')\mapsto t(\gamma)t(\gamma')t(\gamma\gamma')^{-1},
\end{equation*}
where $Z(\GL(n,\C)^{\theta})\cong \C^*\oplus\C^*$ is the centre of $\GL(n,\C)^{\theta}$. Then there is an isomorphism
$$\GL(n,\C)_{\theta}\cong\GL(n,\C)^{\theta}\times_{(1,c)}\llambda^*.$$
By Theorem \ref{th-prym-narasimhan-ramanan}, there is a correspondence between $\GL(n,\C)_{\theta}$-bundles $E$ over $X$ such that $E/\GL(n,\C)^{\theta}\cong\xg$ and $(1,c)$-twisted $\llambda^*$-equivariant $\GL(n,\C)^{\theta}$-bundles over $\xg$.

Let $(F,\cdot)$ be a $(1,c)$-twisted $\llambda^*$-equivariant $\GL(n,\C)^{\theta}$-bundle over $\xg$. Note that the corresponding vector bundle $F(\C^n)$ is a direct sum $V\oplus V$ for some vector bundle $V\to\xg$ of rank $n/2$. By Proposition \ref{prop-associated-bundle-equivariant}, $F(\C^n)$ inherits a equivariant $\llambda^*$-action on $F(\C^n)$, such that the following holds.
\begin{enumerate}
    \item $a_*$ sends $(e,v_1\oplus v_2)\in F(\C^n)$ to $(e\cdot a_*,S_1(v_1\oplus v_2))=(e\cdot a_*,v_2\oplus v_1)$, hence it exchanges the copies of $V$ in $F(\C^n)$. In particular, it induces an isomorphism $f_a:V\cong a_*^*V$ such that $f_a\circ a_*^*f_a=1$ --- this follows from the fact that $\bullet\cdot a_*$ has order 2.
    \item The remaining generator $b_*$ sends $(e,v_1\oplus v_2)\in F(\C^n)$ to $(e\cdot b_*,v_1\oplus -v_2)$, in other words the corresponding automorphism of $F(\C^n)$ is induced by an isomorphism of the form 
    $$f_b\oplus -f_b:V\oplus V\xrightarrow{\sim}b_*^*V\oplus b_*^*V,$$
    where $f_b:V\xrightarrow{\sim} b_*^*V$ is an isomorphism such that $f_b\circ b_*^*f_b=1$.
\end{enumerate}
Therefore, the restrictions of $b_*$ to each copy of $V$ differ by $-1$, whereas the restrictions of $a_*$ are the same. Note that the pullback $\pg^*b$ equipped with the holonomy action of the subgroup $\{1,b_*\}<\llambda^*$ is isomorphic to the trivial line bundle $\oo\to\xg$ equipped with the natural $\{1,b_*\}$-equivariant action multiplied by $-1$ --- i.e., $1$ is associated to the trivial automorphism whereas $b_*$ is associated to the natural isomorphism $\oo\cong b_*^*\oo$ multiplied by $-1$. Let $p_a:X_a\to X$ be the étale cover determined by $a$.
Since the action of the subgroup $\{1,b_*\}<\llambda^*$ on $V\oplus V$ preserves the first summand, we may set $W:=V/\{1,b_*\}$, where the action is that on the first summand, and so there is an isomorphism 
$F(\C^n)/\{1,b_*\}\cong W\oplus W\otimes p_a^*b$. The automorphism $\bullet\cdot a_*$ of $F(\C^n)$ induces an automorphism of $F(\C^n)/\{1,b_*\}$ exchanging the summands, i.e. an isomorphism $W\cong a_*^*W\otimes p_a^*b$, where $a_*$ denotes the generator of $\gal(X_a/X)$. 


Conversely, given a vector bundle $W\to X_a$ of rank $n/2$ equipped with an isomorphism $W\cong a_*^*W\otimes p_a^*b$ of order 2, the pullback of $W\oplus W\otimes p_a^*b$ to $\xg$ is a direct sum $V\oplus V$ of vector bundles of rank $n/2$ over $\xg$, equipped with a $\llambda^*$-equivariant action. This is such that $a_*$ exchanges the two copies of $V$ and $b_*$ is given by an automorphism of the first copy of $V$ and the same automorphism multiplied by $-1$ on the second copy. The bundle of frames $P$ of $V\oplus V$ has a reduction of structure group $F$ to $\GL(n,\C)^{\theta}$, given by the subset of local frames of the form $(v_1,\dots,v_{n/2},v_1',\dots,v_{n/2}')$, where $(v_1,\dots,v_{n/2})$ is a local frame of the first copy of $V$ and $(v_1',\dots,v_{n/2}')$ denotes the same frame on the second copy. By Proposition \ref{prop-associated-bundle-equivariant} there is an induced $(\Int_t,c)$-twisted $\llambda^*$-equivariant action on $P$, given by (\ref{eq-dot-vs-*}). Therefore
\begin{align*}
    (v_1,\dots,v_{n/2},v_1',\dots,v_{n/2}')\cdot a_*&=
    (f_a(v_1'),\dots,f_a(v_{n/2}'),f_a(v_1),\dots,f_a(v_{n/2}))t(a_*)\\&=(f_a(v_1'),\dots,f_a(v_{n/2}'),f_a(v_1),\dots,f_a(v_{n/2}))S_1\\&=(f_a(v_1),\dots,f_a(v_{n/2}),f_a(v_1'),\dots,f_a(v_{n/2}'))
\end{align*}
and
\begin{align*}
    (v_1,\dots,v_{n/2},v_1',\dots,v_{n/2}')\cdot b_*&=
    (f_b(v_1),\dots,f_b(v_{n/2}),-f_b(v_1'),\dots,-f_b(v_{n/2}'))s_a\\&=
    (f_b(v_1),\dots,f_b(v_{n/2}),f_b(v_1'),\dots,f_b(v_{n/2}')),
\end{align*}
where $f_a$ and $f_b$ are the automorphisms on the first copy of $V$ induced by the $\llambda^*$-equivariant action on $V\oplus V$. We conclude that $F\cdot\gamma=F$ for each $\gamma\in\llambda^*$ and so we obtain a $(\tau,c)$-twisted $\llambda^*$-equivariant bundle $F$.

Now let $E$ be a $\GL(n,\C)_{\theta}$-bundle over $X$ such that $E/\GL(n,\C)^{\theta}\cong \llambda$, and let $F$ be the corresponding $(1,c)$-twisted $\llambda^*$-equivariant $\GL(n,\C)^{\theta}$-bundle $F$ over $\xg$, which is given by Theorem \ref{th-prym-narasimhan-ramanan}. Let $V$ and $W$ be as above. Then, by proposition \ref{prop-associated-bundle-equivariant},
$$F(\C^n)/\llambda^*\cong W\oplus a_*^*W/\{1,a_*\}=p_{a*}W.$$ 
Moreover, by an argument similar to the proof of \cite[Proposition 3.1]{narasimhan-ramanan} $W$ is polystable if and only if $p_{a*}W$ is. The proposition follows.
\end{proof}

Combining Propositions \ref{prop-two-generators-l-trivial} and \ref{prop-two-generators-l-non-trivial} with Theorem \ref{th-fixed-points-oscar-ramanan}, we conclude the following.

\begin{theorem}
The inclusions
    $$p_{\llambda*}M(X,\GL(n/4,\C))\cup p_{a*}M(X,\GL(n/2,\C))^{b}\subset \mdl(X,\GL(n,\C))^{\llambda}$$
    and
$$\mdl_*(X,\GL(n,\C))^{\llambda}\subset p_{\llambda*}M(X,\GL(n/4,\C))\cup p_{a*}M(X,\GL(n/2,\C))^{b}$$
hold, where $M(X,\GL(n/2,\C))^{b}$ is defined as in Proposition \ref{prop-two-generators-l-non-trivial} and $\mdl_*(X,\GL(n,\C))$ is the moduli space of stable vector bundles of rank $n$ over $X$.
\end{theorem}

\begin{remark}
The variety $\mdl(X_a,\GL(n,\C))^b$ is not empty in general. For example, set $n=1$ and assume that the genus $g$ of $X$ is greater than $0$. Take a line bundle $L$ over $X_a$ of order two such that $L$ is not isomorphic to $\gamma^*L$. This is possible because the group of line bundles which are invariant by pullback is equal to $p_a^*J(X)\subset J(X_a)$, and the dimensions of the Jacobians $J(X)$ and $J(X_a)$ are $g$ and $2g-1$, respectively. Take $b$ equal to the quotient of $L\otimes\gamma^*L$ by the action of the Galois group of $X_a$ which permutes the factors. Then $b$ has order two, as required, and
$$L\cong\gamma^*L\otimes(L\otimes\gamma^*L)\cong\gamma^*L\otimes p_a^*b, $$
so that $L\in \mdl(X_a,\GL(1,\C))^b=J(X_a)^b$.
\end{remark}

\subsection{Antisymmetric pairings and the character variety \texorpdfstring{$\x(\llambda,\Int(\GL(n,\C)))$}{}}
Now let $\llambda\subset J(X)$ be an arbitrary finite subgroup of the Jacobian of $X$. The first step is to understand the character variety 
$$\x(\llambda,\Int(\GL(n,\C)))=\Hom(\llambda,\Int(\GL(n,\C)))/\Int(\GL(n,\C)),$$
which is crucial in the statement of Theorem \ref{th-fixed-points-oscar-ramanan}.

Let $l:\llambda\to\llambda^*$ be an \textbf{antisymmetric pairing}, i.e. a homomorphism whose associated pairing satisfies $\langle\lambda,\lambda\rangle=1$ for each $\lambda\in\llambda$. 
In particular note that 
\begin{equation}\label{eq-antisymmetry}
    1=\langle\lambda\lambda',\lambda\lambda'\rangle=\langle\lambda,\lambda'\rangle\langle\lambda',\lambda\rangle
\end{equation}
for every $\lambda$ and $\lambda'\in\llambda$.
Consider a maximal subgroup $\iota:\Delta\hookrightarrow\llambda$ satisfying that the induced homomorphism $\Delta\to\Delta^*$ is trivial, which denote by a \textbf{maximal isotropic subgroup}. In particular, the kernel of $\llambda\xrightarrow{l}\llambda^*\xrightarrow{\iota^*}\Delta^*$ is equal to $\Delta$. Indeed, if it were bigger than $\Delta$ then there would be an element $\lambda\in\llambda$ such that $\langle\lambda,\delta\rangle=1$ for each $\delta\in\Delta$, hence because of the antisymmetry of $l$ the subgroup of $\llambda$ generated by $\lambda$ and $\Delta$ would pair trivially with itself, contradicting the maximality of $\Delta$. Hence $\iota^*\circ l$ induces an injection
$$f:\llambda/\Delta\hookrightarrow\Delta^*.$$

Consider a homomorphism
$$s:\Delta\to\C^{*n}\subset\GL(n,\C)$$
landing in the subgroup of diagonal matrices of $\GL(n,\C)$. The set of weights for the action of $\Delta$ on $\C^n$ is a subset $\widehat{\Delta}\subseteq s(\Delta)^*\le\Delta^*$. We denote by $W_{\delta}$ the weight space in $\C^n$ with weight $\delta\in\Delta^*$.
A matrix $M$ whose only non-zero entries are contained in the blocks corresponding to each weight space of $s(\Delta)$ is called a \textbf{$\Delta$-matrix}. In other words, this is an automorphism of $\C^n$ of the form
\begin{equation}\label{eq-def-Delta-matrix}
    M=\bigoplus_{\delta\in\widehat{\Delta}} M_{\delta}\in
    \bigoplus_{\delta\in\widehat{\Delta}} \GL(W_{\delta}).
\end{equation}
Given a $\Delta$-matrix $M$, we denote by $M_{\delta}$ the block which is the restriction to the weight space $W_{\delta}$ for each $\delta\in\widehat{\Delta}$, as in (\ref{eq-def-Delta-matrix}).
A matrix $M$ which is given by a set of linear isomorphisms $M_{\delta'\delta,\delta'}$ going from a weight space $W_{\delta'}$ to the weight space $W_{\delta'\delta}$, where $\delta$ and $\delta'\in\widehat{\Delta}$, is called a \textbf{permutation matrix}. In other words, $M$ is of the form
\begin{equation}\label{eq-def-permutation-matrix}
    M=\bigoplus_{\delta'\in\widehat{\Delta}}M_{\delta'\delta,\delta'}\in \bigoplus_{\delta'\in\widehat{\Delta}}\Hom(W_{\delta'},W_{\delta'\delta}).
\end{equation}
The element $\delta\in\Delta^*$ is denoted $p(M)$. In particular, $\Delta$-matrices are permutation matrices with trivial $p$-image.

\begin{definition}\label{def-admissible-pair-representative-triple}
A \textbf{representative triple} for $\llambda$ is a triple $(l,\Delta,s)$, where $l:\llambda\to\llambda^*$ is an antisymmetric pairing, $\Delta\le\llambda$ is a maximal isotropic subgroup and $s:\llambda\to\GL(n,\C)$ is a map satisfying that:
\begin{enumerate}
    \item It restricts to a homomorphism $s\vert_{\Delta}:\Delta\to\C^{*n}\subset\GL(n,\C)$.
    \item\label{condition-ints-hom} The map $\Int_s$ is a homomorphism.
    \item The antisymmetric pairing $$\llambda\to\llambda^*;\,\lambda\mapsto(\lambda'\mapsto \sg s_{\lambda'}\sg^{-1}s_{\lambda'}^{-1})$$
    which is well defined by (2) and the fact that $\llambda$ is abelian, is equal to $l$. In particular, for every $\lambda\in\llambda$ the matrix $\sg$ is a permutation matrix such that 
    $
    p(\sg)=\iota^*l(\lambda)^{-1}.    
    $
    Note that this condition only depends on the class of $\Int_s$ in $\x(\llambda,\Int(\GL(n,\C)))$.
    \item The image of $s$ consists of permutation matrices whose blocks are multiples of the identity.
    \end{enumerate}
\end{definition}

\begin{lemma}\label{lemma-class-representative-triple}
For every class in the character variety $\x(\llambda,\Int(\GL(n,\C)))$ there exists a representative triple $(l,\Delta,s)$ such that $\Int_s$ is in the class.
\end{lemma}
\begin{proof}
Let 
$$\theta:\llambda\to\Int(\GL(n,\C));\,\lambda\mapsto\Int_{\sg}$$
be a homomorphism.
Since $\llambda$ is abelian, we obtain an antisymmetric pairing
$$l:\llambda\to\llambda^*;\,\lambda\mapsto(\lambda'\mapsto \sg s_{\lambda'}\sg^{-1}s_{\lambda'}^{-1}).$$
Choose a maximal isotropic subgroup $\iota:\Delta\hookrightarrow\llambda$ and call the corresponding injection
$f:\llambda/\Delta\hookrightarrow\Delta^*.$ Since the elements in $s(\Delta)$ are semisimple --- a finite power of each of them is in $\C^*$ --- and commute with each other, they can be simultaneously diagonalised. Hence we may assume, after conjugating $\theta$ by an element of $\Int(\GL(n,\C))$ if necessary, that they are all diagonal. Moreover, we may assume after rescaling that the map $s\vert_{\Delta}$ is a homomorphism and every element of $s(\Delta)$ has some diagonal entries equal to one. Then the set of weights is a subset $\widehat{\Delta}\subseteq s(\Delta)^*\le\Delta^*$ containing 1. A matrix $M$ whose conjugation by the elements in $s(\Delta)$ induces an element $\delta\in\Delta^*$ must be a permutation matrix with $p(M)=\delta$. Therefore, the homomorphism 
$\llambda/\Delta\xrightarrow{ps}\Delta^*$
induced by $p$ is equal to the multiplicative inverse of $f$. Given a weight $\delta\in\widehat{\Delta}\subset\Delta^*$ there is an associated coset of weights $\delta f(\llambda/\Delta)$, and the dimensions of all the weight spaces in a given coset must be equal. The subgroup $f(\llambda/\Delta)$ of $\Delta^*$ preserves $\widehat{\Delta}$, since the corresponding cosets are the orbits of the action of $\llambda$ induced by applying elements of $s(\llambda)$ to $\C^n$ and extracting the corresponding permutation of the weight spaces.

We show that there exists a $\Delta$-matrix $S$ such that $\Int_S\theta\Int_S^{-1}(\llambda)$ consists of conjugations by permutation matrices whose blocks are multiples of the identity. Since conjugating by a $\Delta$-matrix does not change the elements of $s(\Delta)$, it is enough to choose representatives $\lambda\in \llambda$ of each coset $\ol\in\llambda/\Delta$ and find a $\Delta$-matrix $S$ so that $Ss(\lambda)S^{-1}$ is a permutation matrix with blocks in $\C^*$ for each $\ol\in\llambda/\Delta$. Choose a representative $\delta\in\widehat{\Delta}$ of each orbit $\delta f(\llambda/\Delta)\in\widehat{\Delta}/f(\llambda/\Delta)$. Consider the $\Delta$-matrix $S$ determined by $S_{\delta\delta'}=s(\lambda_{\delta'})_{\delta\delta',\delta}^{-1},$
where $\lambda_{\delta'}\in\llambda$ is any element in $f^{-1}(\delta')$ and $\delta\in\widehat{\Delta}$ is the representative of $\delta f(\llambda/\Delta)$. Here $s(\lambda_{\delta'})_{\delta\delta',\delta}$ is just the square matrix representing the restriction of $s(\lambda_{\delta'})$ to the $\delta$-weight space, whose image is the $\delta\delta'$-weight space. We show that $S$ satisfies the claim. 

Indeed, let $\lambda\Delta\in\llambda/\Delta$ be represented by $\lambda\in\llambda$. We want to show that $$S_{\delta'f(\ol)}s(\lambda)_{\delta'f(\ol),\delta'}S_{\delta'}^{-1}=s(\lambda'')_{\delta'f(\ol),\delta}^{-1}s(\lambda)_{\delta'f(\ol),\delta'}s(\lambda')_{\delta',\delta}$$
is a multiple of the identity for each $\od\in\widehat{\Delta}/f(\llambda/\Delta)$ and $\delta'\in \od$, where $f(\lambda''\Delta)=\delta^{-1}\delta'f(\ol)$ and $f(\lambda' \Delta):=\delta^{-1}\delta'$. But, since $\theta$ is a homomorphism, $s(\lambda'')=s(\lambda)s(\lambda')d$ for some $\Delta$-matrix $d$ such that $d_{\delta}\in\C^*$ for each $\delta\in\widehat{\Delta}$, hence 
$$s(\lambda'')_{\delta'f(\ol),\delta}^{-1}s(\lambda)_{\delta'f(\ol),\delta'}s(\lambda')_{\delta',\delta}=d_{\delta}^{-1}s(\lambda')_{\delta',\delta}^{-1}s(\lambda)_{\delta'f(\ol),\delta'}^{-1}s(\lambda)_{\delta'f(\ol),\delta'}s(\lambda')_{\delta',\delta}=d_{\delta}^{-1},$$
as required.
\end{proof}

\subsection{The homomorphism \texorpdfstring{$\cct$}{c sub theta}}
Let $\theta:\llambda\to \Int(\GL(n,\C))$ be a homomorphism. When studying fixed points we only care about the class of $\theta$ in $\x(\llambda, \Int(\GL(n,\C)))$ by Theorem \ref{th-fixed-points-oscar-ramanan}, hence by Lemma \ref{lemma-class-representative-triple} we may assume that $\theta=\Int_s$ for some representative triple $(l,\Delta,s)$. The group $\GL(n,\C)^{\theta}$ defined in Section \ref{section-Gtheta} consists of all the invertible $\Delta$-matrices $M$ such that $M_{\delta}=M_{\delta'}$ whenever $\delta$ and $\delta'$ are elements of $\widehat{\Delta}$ in the same coset of $\widehat{\Delta}/f(\llambda/\Delta)$. By Section \ref{section-Gtheta} there is a homomorphism
$$\cct:\GL(n,\C)_{\theta}\to \llambda^*.$$
Recall that the moduli space $\md_{\llambda}(X,\GL(n,\C)_{\theta})$ appearing in the statement of Theorem \ref{th-fixed-points-oscar-ramanan} is the subvariety of $\md(X,\GL(n,\C)_{\theta})$ consisting of $\GL(n,\C)_{\theta}$-bundles $E$ such that $\cct(E)\cong\llambda$. Thus the second step in the description of $\md(X,\GL(n,\C))^{\llambda}$ is to understand $\cct$.

\begin{lemma}\label{lemma-gltheta}
The image of $\cct$ is equal to
$$\sett:=\{\gamma\in\llambda^*\suhthat \gamma\vert_{\Delta}\widehat{\Delta}=\widehat{\Delta}\andd \dim W_{\gamma\delta}=\dim W_{\delta}\forevery \delta\in\widehat{\Delta}\},$$
where $W_{\delta}$ is the $\delta$-weight space.
Moreover, there is a subgroup $\setp<\GL(n,\C)_{\theta}$ containing the centre  $Z(\GL(n,\C)^{\theta})$ of $\GL(n,\C)^{\theta}$ such that the restriction $\cct\vert_{\setp}$ induces an isomorphism
$$\setp/Z(\GL(n,\C)^{\theta})\cong \sett.$$
\end{lemma}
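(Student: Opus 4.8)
The plan is to read $\cct$ off from the decomposition $\C^{n}=\bigoplus_{\delta\in\widehat{\Delta}}W_{\delta}$ into weight spaces of $s(\Delta)$, and to build $\setp$ out of the scalar-block permutation matrices lying in $\gls$. First I would prove $\cct(\gls)\subseteq\sett$. For $g\in\gls$ set $\gamma:=\cct(g)$, so that $\theta_{\lambda}(g)=\gamma(\lambda)g$ for all $\lambda$. Restricting to $\lambda=\delta\in\Delta$, where $s_{\delta}$ is diagonal and acts on $W_{\delta'}$ by the scalar $\delta'(\delta)$, the relation $\theta_{\delta}(g)=\gamma(\delta)g$ forces every nonzero block of $g$ to run from $W_{\delta'}$ to $W_{(\gamma\vert_{\Delta})\delta'}$; that is, $g$ is a permutation matrix with $p(g)=\gamma\vert_{\Delta}$. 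Invertibility of $g$ then makes $\gamma\vert_{\Delta}$ permute $\widehat{\Delta}$ while preserving the dimensions $\dim W_{\delta}$, i.e. $\gamma\in\sett$.

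Conversely, for $\gamma\in\sett$ I would construct a scalar-block permutation matrix $g\in\gls$ with $\cct(g)=\gamma$. Put $\pi:=\gamma\vert_{\Delta}$; since $\pi$ permutes $\widehat{\Delta}$ preserving dimensions I can look for $g$ sending each $W_{\delta}$ to $W_{\pi\delta}$ by a scalar $c_{\delta}$ times a fixed identification. Using that the blocks of $s_{\lambda}$ are scalar (condition (4) of a representative triple), the membership condition $\theta_{\lambda}(g)=\gamma(\lambda)g$ becomes, block by block, a transport relation determining $c_{\rho_{\lambda}\delta}$ from $c_{\delta}$, where $\rho_{\lambda}:=p(s_{\lambda})=\iota^{*}l(\lambda)^{-1}$. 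I would solve it by prescribing $c$ on one representative of each $f(\Lambda/\Delta)$-orbit of $\widehat{\Delta}$ and transporting. The characters trivial on $\Delta$, i.e. $(\Lambda/\Delta)^{*}$, are realized even more simply by diagonal matrices $\operatorname{diag}(c_{\delta})$, because $f(\Lambda/\Delta)$ acts on $\widehat{\Delta}$ freely by translation and so there is no cyclic obstruction at all. Conceptually the whole statement is that $\C^{n}$ and its $\gamma$-twist $(\C^{n})^{\gamma}$ are isomorphic over the semisimple twisted group algebra $\C^{\beta}[\Lambda]$ (where $s_{\lambda}s_{\lambda'}=\beta(\lambda,\lambda')s_{\lambda\lambda'}$): an element $g\in\gls$ with $\cct(g)=\gamma$ is exactly a $\C^{\beta}[\Lambda]$-module isomorphism $\C^{n}\cong(\C^{n})^{\gamma}$, and since $\Delta$ is isotropic the isomorphism type of $\C^{n}$ is recorded by the weight multiplicities $(\dim W_{\delta})$, so the isomorphism exists precisely when $\gamma\in\sett$.

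I expect the \emph{main obstacle} to be exactly this realization step: verifying that the transport system for the scalars $c_{\delta}$ is globally consistent. The constraints are the cyclic products of scalars accumulated around the $f(\Lambda/\Delta)$-orbits, each met by extracting a root in $\C^{*}$, and their simultaneous compatibility for different $\lambda$ has to be deduced from the cocycle relations (conditions (2)--(3)) together with $\gamma$ being a homomorphism; divisibility of $\C^{*}$ is what lets the construction succeed without obstruction. The module-theoretic reading makes clear \emph{why} it must go through, but the bookkeeping identifying the $s(\Delta)$-weight data as a complete invariant (each simple being supported on a single $f(\Lambda/\Delta)$-orbit of weights) is the step requiring genuine care.

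Finally, for the subgroup I would let $\setp\le\gls$ be the set of scalar-block permutation matrices lying in $\gls$; the elements constructed above all have this form, so $\cct(\setp)=\sett$. This $\setp$ is a subgroup, and it contains $Z(\glt)$, which consists exactly of the matrices that are a single scalar on each $f(\Lambda/\Delta)$-orbit block. A scalar-block permutation matrix lying in $\glt$ has trivial permutation and blocks constant on orbits, hence lies in $Z(\glt)$, so $\setp\cap\glt=Z(\glt)$. Since $\ker\cct=\glt$ by the exact sequence (\ref{eq-exact-seq-groups}), the restriction $\cct\vert_{\setp}$ has kernel $Z(\glt)$ and image $\sett$, giving the asserted isomorphism $\setp/Z(\glt)\cong\sett$ (and in particular $\gls/\glt\cong\sett$).
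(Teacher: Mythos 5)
Your proposal is correct, and its core construction is the same as the paper's: the containment $\cct(\gls)\subseteq\sett$ is argued identically (restricting $\theta_\delta(g)=\gamma(\delta)g$ to $\delta\in\Delta$ forces $g$ to be a permutation matrix with $p(g)=\gamma\vert_\Delta$, and invertibility forces preservation of $\widehat{\Delta}$ and of dimensions), and your transport construction --- prescribe the scalar on one representative $\delta$ of each $f(\Lambda/\Delta)$-orbit and propagate with the blocks of the $s(\lambda)$ --- is exactly the paper's matrix $\mtau$, defined by $\mtau_{\delta'\gamma,\delta'}:=s(\lambda)_{\delta'\gamma,\delta\gamma}s(\lambda)_{\delta',\delta}^{-1}\gamma(\lambda)^{-1}$ where $f(\lambda\Delta)=\delta'\delta^{-1}$. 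Where you genuinely diverge, and in fact improve on the paper, is the subgroup step: you take $\setp$ to be \emph{all} scalar-block permutation matrices in $\gls$, so closure under multiplication is immediate, and the isomorphism $\setp/Z(\glt)\cong\sett$ follows from $\setp\cap\glt=Z(\glt)$ together with exactness of (\ref{eq-exact-seq-groups}). The paper instead sets $\setp:=Z(\glt)\{\mtau\}_{\gamma\in\sett}$ and must verify by an explicit block computation that $\mtau\mtauu$ lies in $Z(\glt)\mttauu$; your route avoids that computation entirely, and the two subgroups coincide (for any scalar-block permutation matrix $g\in\gls$ with $\cct(g)=\gamma$, the product $g(\mtau)^{-1}$ is a scalar-block $\Delta$-matrix lying in $\glt$, hence in $Z(\glt)$ --- the same observation you use).

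One correction, precisely at the step you flag as the main obstacle: no root extraction and no divisibility of $\C^*$ enters the consistency of the transport, and indeed neither could help. The transport relations are multiplicative in the unknown scalars $c_\delta$, so the holonomy around any closed chain of transports is a fixed scalar independent of the prescribed initial values; if it were nontrivial, no choice of roots of anything would repair it. What must be shown --- and what the two displayed computations in the paper's proof of this lemma establish --- is that the holonomy is automatically trivial: by condition (2) of a representative triple, $s(\lambda)s(\lambda_0)$ and $s(\lambda\lambda_0)$ differ by a constant $d\in\C^*$, and $d$ cancels in the transport coefficient $s(\cdot)_{\delta'\gamma,\delta\gamma}\,s(\cdot)_{\delta',\delta}^{-1}$ because it enters once in each factor with opposite exponents; combined with $\gamma$ being a homomorphism, this yields both the well-definedness of $\mtau$ (independence of the choice of $\lambda$ in its $\Delta$-coset) and the full relation $\Int_{s_\lambda}(\mtau)=\gamma(\lambda)\mtau$ for every $\lambda\in\Lambda$. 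So your plan does go through, but by scalar cancellation, not by divisibility of $\C^*$; your module-theoretic gloss (weight multiplicities along $f(\Lambda/\Delta)$-orbits as a complete invariant of the twisted group algebra module) is a correct explanation of \emph{why} it must, but it is the cancellation that constitutes the proof.
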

\begin{proof}
For each element $\gamma\in \sett$ we define a permutation matrix $\mtau$ such that $p(\mtau)=\gamma\vert_{\Delta}$ and
\begin{equation}\label{eq-mtau}
    \Int_{s(\lambda)}(\mtau)=\gamma(\lambda)\mtau
\end{equation}
for each $\lambda\in\llambda$. First choose a representative $\delta\in\widehat{\Delta}$ of each coset $\delta f(\llambda/\Delta)\in\widehat{\Delta}/f(\llambda/\Delta)$. Then, for each $\delta'\in \delta f(\llambda/\Delta)$, choose $\lambda\in\llambda$ such that $f(\lambda\Delta)=\delta'\delta^{-1}$ and define 
$$\mtau_{\delta'\gamma,\delta'}:=s(\lambda)_{\delta'\gamma,\delta\gamma}s(\lambda)_{\delta',\delta}^{-1}\gamma(\lambda)^{-1}.$$ 
This is independent of the choice of $\lambda$ since, for every $\delta_0\in \Delta=\ker f$, one has
\begin{align*}
    s(\lambda\delta_0)_{\delta'\gamma,\delta\gamma}s(\lambda\delta_0)_{\delta',\delta}^{-1}\gamma(\lambda\delta_0)^{-1}=ds(\lambda)_{\delta'\gamma,\delta\gamma}s(\delta_0)_{\delta\gamma}d^{-1}s(\lambda)_{\delta',\delta}^{-1}s(\delta_0)^{-1}_{\delta}\gamma(\lambda)^{-1}\gamma(\delta_0)^{-1}=\\
    s(\lambda)_{\delta'\gamma,\delta\gamma}s(\lambda)_{\delta',\delta}^{-1}\gamma(\lambda)^{-1}\gamma(\delta_0)\gamma(\delta_0)^{-1}=s(\lambda)_{\delta'\gamma,\delta\gamma}s(\lambda)_{\delta',\delta}^{-1}\gamma(\lambda)^{-1},
\end{align*}
where $d\in\C^*$ depends on $\lambda$ and $\delta_0$.
Moreover, for each element $\lambda'\in\llambda$, one has
\begin{align*}
s(\lambda')_{f(\lambda'\Delta)\delta'\gamma,\delta'\gamma}\mtau_{\delta'\gamma,\delta'}s(\lambda')_{{f(\lambda'\Delta)\delta',\delta'}}^{-1}=
s(\lambda')_{f(\lambda'\Delta)\delta'\gamma,\delta'\gamma}s(\lambda)_{\delta'\gamma,\delta\gamma}s(\lambda)_{\delta',\delta}^{-1}s(\lambda')_{{f(\lambda'\Delta)\delta',\delta'}}^{-1}\gamma(\lambda)^{-1}
\end{align*}
which, since $s(\lambda)s(\lambda')$ and $s(\lambda\lambda')$ differ by a constant, is equal to
$$s(\lambda\lambda')_{f(\lambda'\Delta)\delta'\gamma,\delta\gamma}s(\lambda\lambda')_{f(\lambda'\Delta)\delta',\delta}^{-1}\gamma(\lambda)^{-1}=\mtau_{f(\lambda'\Delta)\delta'\gamma,f(\lambda'\Delta)\delta'}\gamma(\lambda').
$$
This shows that $\mtau$ satisfies $\Int_{\sg}(\mtau)=\gamma(\lambda)\mtau$ for every $\lambda\in\llambda$. 

Note that, if $\gamma\in\llambda^*$ did not restrict to an element of $\Delta^*$ preserving $\widehat{\Delta}$, there would be no matrix $M$ satisfying $\Int_{\delta}M=\gamma(\delta)M$ for each $\delta\in\Delta$, since this implies that $M$ is a permutation matrix such that $p(M)=\gamma\vert_{\Delta}$. The automorphism $M$ would then send some non-zero weight space to a trivial weight space via an isomorphism, which is absurd. Similarly, if $W_{\delta}$ and $W_{\gamma\delta}$ had different dimension for some $\delta\in\widehat{\Delta}$, the restriction of $M$ to $W_{\delta}$ would be an isomorphism with a vector space of different dimension, which is impossible. Therefore, the map
\begin{equation}\label{eq-mtau-gamt}
    \{\mtau\}_{\gamma\in \sett}\xrightarrow{\cct}\gamt
\end{equation}
is a bijection. 

Now we prove that, if $Z(\glt)\cong (\C^*)^{\vert\widehat{\Delta}/f(\llambda/\Delta)\vert}$ is the centre  of $\glt$, the set $\setp:=Z(\glt)\{\mtau\}_{\gamma\in\sett}$ is actually a subgroup of $\GL(n,\C)$, so that $\cct$ induces an isomorphism $\setp/Z(\glt)\cong\gamt$ --- note that $Z(\glt)\subset\glt$ is in the kernel of $\cct$ by (\ref{eq-exact-seq-groups}), hence the fact that \ref{eq-mtau-gamt} is a bijection implies that it is actually equal to the kernel. Indeed, it is enough to see that whenever $\gamma$ and $\gamma'\in\llambda^*$ preserve $\widehat{\Delta}$ and the corresponding weight spaces have the same dimension, the matrix $\mtau\mtauu$ is equal to $\mttauu$ multiplied by an element of $Z(\glt)$, which is a $\Delta$-matrix whose restriction to the direct sum of the weight spaces corresponding to each orbit of $\widehat{\Delta}/f(\llambda/\Delta)$ is constant. Given $\delta'\in\Delta^*$, the chosen representatives $\delta\in\delta'f(\llambda/\Delta)$ and $\delta_0\in\delta'\gamma' f(\llambda/\Delta)$ and elements $\lambda$ and $\lambda_0\in\llambda$ satisfying $f(\ol)=\delta'\delta^{-1}$ and $f(\lambda_0\Delta)=\delta\delta_0^{-1}\gamma'$, one has
$$\mtau_{\delta'\gamma\gamma',\delta'\gamma'}\mtauu_{\delta'\gamma',\delta'}=
s(\lambda\lambda_0)_{\delta'\gamma\gamma',\delta_0\gamma}s(\lambda\lambda_0)_{\delta'\gamma',\delta_0}^{-1}\gamma(\lambda\lambda_0)^{-1}
s(\lambda)_{\delta'\gamma',\delta\gamma'}s(\lambda)_{\delta',\delta}^{-1}\gamma'(\lambda)^{-1},$$
which is equal to
\begin{align*}
    s(\lambda)_{\delta'\gamma\gamma',\delta\gamma\gamma'}s(\lambda_0)_{\delta\gamma\gamma',\delta_0\gamma}s(\lambda_0)^{-1}_{\delta\gamma',\delta_0}s(\lambda)_{\delta'\gamma',\delta\gamma'}^{-1}
    s(\lambda)_{\delta'\gamma',\delta\gamma'}s(\lambda)_{\delta',\delta}^{-1}\gamma\gamma'(\lambda)^{-1}\gamma(\lambda_0)^{-1}=\\
    s(\lambda)_{\delta'\gamma\gamma',\delta\gamma\gamma'}s(\lambda)_{\delta',\delta}^{-1}\gamma\gamma'(\lambda)^{-1}[s(\lambda_0)_{\delta\gamma\gamma',\delta_0\gamma}s(\lambda_0)^{-1}_{\delta\gamma',\delta_0}\gamma(\lambda_0)^{-1}]=\\
    \mttauu_{\delta'\gamma\gamma',\delta'}[s(\lambda_0)_{\delta\gamma\gamma',\delta_0\gamma}s(\lambda_0)^{-1}_{\delta\gamma',\delta_0}\gamma(\lambda_0)^{-1}].
\end{align*}
The expression in brackets only depends on $\delta,\delta_0,\gamma$ and $\gamma'$, so that $\mtau_{\delta'\gamma\gamma',\delta'\gamma'}\mtauu_{\delta'\gamma',\delta'}$ and $\mttauu_{\delta'\gamma\gamma',\delta'}$ differ by an element of $Z(\glt)$ as required.
\end{proof}

\begin{corollary}\label{cor-surjective-cct}
The homomorphism $\cct:\gls\to\llambda^*$ is surjective if and only if the set of weights is the whole $\Delta^*$ and the weight spaces have all the same dimension. In particular, under this assumption $s\vert_{\Delta}$ is injective, $\widehat{\Delta}$ is identified with $\Delta^*$ via $s^*$ and the order of $\Delta$ must divide $n$.
\end{corollary}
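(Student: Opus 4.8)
The plan is to read the statement off directly from the identification $\im(\cct)=\sett$ established in Lemma \ref{lemma-gltheta}. Surjectivity of $\cct\colon\gls\to\Lambda^*$ is then literally the assertion $\sett=\Lambda^*$, so I would prove the equivalence by unwinding the two defining conditions of $\sett$ for every $\gamma\in\Lambda^*$.

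For the \emph{if} direction, assume $\widehat{\Delta}=\Delta^*$ and that all weight spaces $W_\delta$ share a common dimension. Given any $\gamma\in\Lambda^*$, its restriction $\gamma\vert_{\Delta}$ lies in $\Delta^*$, and multiplication by it is a bijection of $\Delta^*=\widehat{\Delta}$, so it preserves $\widehat{\Delta}$; the condition $\dim W_{\gamma\delta}=\dim W_\delta$ is then automatic. Hence $\gamma\in\sett$ and $\sett=\Lambda^*$. For the \emph{only if} direction, assume $\sett=\Lambda^*$. The key input is that the restriction homomorphism $\Lambda^*=\Hom(\Lambda,\C^*)\to\Hom(\Delta,\C^*)=\Delta^*$ is surjective, because $\C^*$ is divisible and hence an injective $\Z$-module, so every character of $\Delta$ extends to $\Lambda$. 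Consequently the first condition defining $\sett$, now valid for all $\gamma$, says that multiplication by every element of $\Delta^*$ preserves $\widehat{\Delta}$; since the regular action of $\Delta^*$ on itself is transitive and $\widehat{\Delta}\neq\emptyset$, this forces $\widehat{\Delta}=\Delta^*$. For the dimensions, given weights $\delta_1,\delta_2\in\Delta^*$ I would pick $\gamma\in\Lambda^*$ with $\gamma\vert_{\Delta}=\delta_2\delta_1^{-1}$ and read the second condition $\dim W_{\gamma\delta_1}=\dim W_{\delta_1}$ as $\dim W_{\delta_2}=\dim W_{\delta_1}$, giving a common dimension.

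The \emph{in particular} clauses then follow formally. From $\widehat{\Delta}=\Delta^*$ together with $\widehat{\Delta}\subseteq s(\Delta)^*\subseteq\Delta^*$ we get $s(\Delta)^*=\Delta^*$; since $s(\Delta)^*$ is exactly the group of characters of $\Delta$ trivial on $\ker(s\vert_{\Delta})$, this equality holds only when $\ker(s\vert_{\Delta})$ is trivial, i.e. $s\vert_{\Delta}$ is injective, so $s\vert_{\Delta}$ is an isomorphism onto $s(\Delta)$ and the induced $s^*$ identifies $\widehat{\Delta}=s(\Delta)^*$ with $\Delta^*$. Finally, decomposing $\C^n=\bigoplus_{\delta\in\widehat{\Delta}}W_\delta$ into $|\widehat{\Delta}|=|\Delta^*|=|\Delta|$ summands of common dimension $d$ gives $n=d\,|\Delta|$, so $|\Delta|$ divides $n$. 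I expect no genuine obstacle, as all the real work sits in Lemma \ref{lemma-gltheta}; the only points that merit a word of justification are the surjectivity of the restriction of characters and the transitivity argument that upgrades invariance under all of $\Delta^*$ into the equality $\widehat{\Delta}=\Delta^*$.
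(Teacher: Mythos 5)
Your proof is correct and takes essentially the same route the paper intends: the corollary is a direct unwinding of the identity $\im(\cct)=\sett$ from Lemma \ref{lemma-gltheta}, which the paper treats as immediate and leaves unproved. Your only added ingredients — surjectivity of the restriction $\Lambda^*\to\Delta^*$ (via divisibility of $\C^*$) and the transitivity argument forcing $\widehat{\Delta}=\Delta^*$ — are exactly the details the paper suppresses, and they are sound.
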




The triple $(l,\Delta,s)$ is called \textbf{admissible} if any of the two equivalent conditions in the statement of Corollary \ref{cor-surjective-cct} are met. In particular this implies, by injectivity of $s\vert_{\Delta}$ and the fact that $f^{-1}=ps$, that $\theta=\Int_s$ is injective. 

\subsection{Admissible triples and components of the fixed point variety}
With definitions as in Section \ref{section-fixed-moduli} we show that $\widetilde{M}_{\llambda}(X,\GL(n,\C)_{\theta})$ is empty unless the triple $(l,\Delta,s)$ is admissible. Moreover, we parametrize the components of the fixed point locus using solely antisymmetric pairings.

\begin{corollary}\label{cor-moduli-non-empty}
If $M_*(X,\GL(n,\C))^{\llambda}$ is the intersection of the fixed point locus with the stable locus, the intersection $$M_*(X,\GL(n,\C))^{\llambda}\cap \widetilde{M}(X,\GL(n,\C)_{\theta})$$
is empty unless $(l,\Delta,s)$ is an admissible triple.
\end{corollary}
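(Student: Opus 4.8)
The plan is to reduce everything to the surjectivity criterion of Corollary~\ref{cor-surjective-cct} by showing that the cohomological condition $\ctt(F)\cong\Lambda$ cutting out the bundles underlying $\widetilde{M}_{\Lambda}(X,\gls)$ can only be met when $\cct$ is surjective. By Lemma~\ref{lemma-class-representative-triple} I may assume $\theta=\Int_s$ for a representative triple $(l,\Delta,s)$, so that admissibility is defined. Recall from Section~\ref{section-fixed-moduli} that $\widetilde{M}_{\Lambda}(X,\gls)$ is the image in $M(X,\GL(n,\C))$ of the polystable $\gls$-bundles $F$ satisfying $\ctt(F)\cong\Lambda$. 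It therefore suffices to prove that, when $(l,\Delta,s)$ is not admissible, \emph{no} $\gls$-bundle $F$ at all (whether or not polystable) satisfies $\ctt(F)\cong\Lambda$; this makes $\widetilde{M}_{\Lambda}(X,\gls)$ empty and a fortiori forces its intersection with $M_{ss}(X,\GL(n,\C))^{\Lambda}$ to be empty, so that stability plays no role beyond the definition of $\widetilde{M}_{\Lambda}$.

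The heart of the proof is a comparison of two monodromies inside $\Lambda^*$. First I would invoke the factorization of $\ctt$ through $H^1(X,\gamt)$ recorded in Section~\ref{section-Gtheta}, together with the identifications $H^1(X,\gamt)\cong\Hom(\pi_1(X),\gamt)$ and $H^1(X,\Lambda^*)\cong\Hom(\pi_1(X),\Lambda^*)$: since the image of $\cct$ is the subgroup $\sett\le\Lambda^*$ by Lemma~\ref{lemma-gltheta}, every class in the image of $\ctt$ has monodromy $\pi_1(X)\to\Lambda^*$ landing in $\sett$. Second, I would compute the monodromy of the tautological class $\Lambda\in H^1(X,\Lambda^*)$: unwinding $\Hom(\Lambda,H^1(X,\C^*))\cong\Hom(\pi_1(X),\Lambda^*)$, it is the evaluation homomorphism $\mu$ with $\mu(\gamma)(\lambda)=\lambda(\gamma)$. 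Because every $\lambda\in\Lambda$ is a finite-order, hence flat, line bundle determined by its monodromy, the injectivity of $\Lambda\hookrightarrow H^1(X,\C^*)$ says exactly that any $\lambda$ with $\lambda(\gamma)=1$ for all $\gamma$ is trivial; thus the annihilator of $\mu(\pi_1(X))$ in $\Lambda$ is trivial, and by the perfect duality of the finite abelian groups $\Lambda$ and $\Lambda^*$ this forces $\mu$ to be surjective.

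Combining the two facts concludes the argument: a surjection $\pi_1(X)\to\Lambda^*$ cannot factor through a proper subgroup, so if $\sett\subsetneq\Lambda^*$ then $\Lambda$ is not in the image of $\ctt$ and no $F$ with $\ctt(F)\cong\Lambda$ exists. By Corollary~\ref{cor-surjective-cct} the equality $\sett=\Lambda^*$ is equivalent to the surjectivity of $\cct$, which is precisely the admissibility of $(l,\Delta,s)$; the contrapositive gives the claim. I expect the one genuinely substantive step to be the surjectivity of $\mu$ --- that is, recognising that the tautological class $\Lambda$ has full monodromy $\Lambda^*$ --- since once that is in place the remainder is a routine containment of images, and the real content is that the obstruction is monodromy-theoretic rather than stability-theoretic.
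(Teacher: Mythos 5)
Your argument is correct, and its skeleton coincides with the paper's: both rest on comparing the full monodromy of the tautological class $\Lambda\in H^1(X,\Lambda^*)$ against the fact that any class in the image of $\ctt$ has monodromy contained in $\sett=\im\cct$ (Lemma \ref{lemma-gltheta}), and then translating surjectivity of $\cct$ into admissibility via Corollary \ref{cor-surjective-cct}. Where you genuinely diverge is in the proof of the key ingredient, the surjectivity of the evaluation homomorphism $\mu:\pi_1(X)\to\Lambda^*$. The paper obtains this by citing Proposition \ref{prop-XGamma-connected}, which it proves by induction on a minimal generating set of $\Lambda$: one builds intermediate Galois covers and identifies $\ker\bigl(p'^*:J(X)\to J(X_{\Lambda'})\bigr)=\Lambda'$ using equivariant line bundles. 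Your Pontryagin-duality argument --- finite-order line bundles are determined by their holonomy, so the annihilator of $\mu(\pi_1(X))$ in $\Lambda$ is trivial, and a subgroup of $\Lambda^*$ with trivial annihilator in $\Lambda$ must be all of $\Lambda^*$ --- is shorter, self-contained, and in effect constitutes an alternative proof of Proposition \ref{prop-XGamma-connected} itself (connectedness of $\xg$). A secondary difference: you show that in the non-admissible case no $\gls$-bundle $F$ whatsoever satisfies $\ctt(F)\cong\Lambda$, so that $\widetilde{M}_{\Lambda}(X,\gls)$ is empty outright, whereas the paper phrases its one-line proof through Theorem \ref{th-fixed-points-oscar-ramanan} and the stable/simple locus. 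Your version makes transparent that the obstruction is purely monodromy-theoretic and independent of stability, which is in fact the stronger statement the paper asserts in the sentence immediately preceding the corollary.
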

\begin{proof}
The whole point here is that the monodromy of $\llambda$ when considered as an element of $H^1(X,\Hom(\llambda,\C^*))\cong\Hom(\llambda,H^1(X,\C^*))$ is precisely $\llambda^*$ by Proposition \ref{prop-XGamma-connected}. Therefore, according to Theorem \ref{th-fixed-points-oscar-ramanan}, in order for the smooth fixed point locus $M_*(X,\GL(n,\C))^{\llambda}$ to be non-empty we need $\gamt$ to be isomorphic to $\llambda^*$ via the homomorphism $\cct:\GL(n,\C)_{\theta}\to\llambda^*$. Equivalently, $\cct$ must be surjective.
\end{proof}

The inclusion of $\llambda$ in $H^1(X,\C^*)$ is an element of $\Hom(\llambda,H^1(X,\C^*))\cong H^1(X,\llambda^*)$. Thus it provides an étale cover $\pg:\xg\to X$ with galois group $\llambda^*$.

\begin{proposition}\label{prop-XGamma-connected}
    The cover $\xg$ is connected. In other words, the monodromy of $\llambda$ is equal to $\llambda^*$. 
\end{proposition}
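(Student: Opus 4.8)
The plan is to translate connectedness of $\xg$ into surjectivity of a monodromy homomorphism and then to verify that surjectivity by Pontryagin duality for finite abelian groups. The cover $\xg$ is the étale $\Lambda^*$-cover underlying the tautological class $\Lambda\in H^1(X,\Lambda^*)$, so its set of connected components is the coset space of the image of the associated monodromy homomorphism $\rho\colon\pi_1(X)\to\Lambda^*$; hence $\xg$ is connected if and only if $\rho$ is surjective. First I would make $\rho$ explicit. Since every $\lambda\in\Lambda$ has finite order, by the Narasimhan--Seshadri theorem it is a flat line bundle, so it determines a character of $\pi_1(X)$ which I denote $\rho_\lambda\in\Hom(\pi_1(X),\C^*)$; in this way the tautological inclusion $\Lambda\hookrightarrow H^1(X,\C^*)$ is read as a homomorphism $\Lambda\to\Hom(\pi_1(X),\C^*)$, $\lambda\mapsto\rho_\lambda$. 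Under the tensor--hom adjunction $\Hom(\Lambda,\Hom(\pi_1(X),\C^*))\cong\Hom(\pi_1(X),\Hom(\Lambda,\C^*))=\Hom(\pi_1(X),\Lambda^*)$ underlying the identification $\Hom(\Lambda,H^1(X,\C^*))\cong H^1(X,\Lambda^*)$ used in Section \ref{section-fixed-isomorphism-classes}, this inclusion corresponds to the homomorphism $\rho$ governed by the pairing
$$\pair{\gamma}{\lambda}:=\rho_\lambda(\gamma),\qquad \rho(\gamma)(\lambda)=\rho_\lambda(\gamma),$$
for $\gamma\in\pi_1(X)$ and $\lambda\in\Lambda$.

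Next I would deduce surjectivity of $\rho$ by duality. Write $H:=\im\rho\le\Lambda^*$. Since $\Lambda$ is finite abelian and $\Lambda^*=\Hom(\Lambda,\C^*)$ is its Pontryagin dual, the standard identity $\size{H}\,\size{H^{\perp}}=\size{\Lambda}$ for the annihilator $H^{\perp}:=\{\lambda\in\Lambda:\chi(\lambda)=1\text{ for all }\chi\in H\}$ shows that $H=\Lambda^*$ if and only if $H^{\perp}$ is trivial. Now $\lambda\in H^{\perp}$ means $\rho(\gamma)(\lambda)=\rho_\lambda(\gamma)=1$ for every $\gamma\in\pi_1(X)$, that is, $\rho_\lambda$ is the trivial character, that is, $\lambda$ is the trivial line bundle. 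Hence $H^{\perp}$ is exactly the kernel of the tautological inclusion $\Lambda\hookrightarrow H^1(X,\C^*)$, which is trivial because $\Lambda$ is a subgroup. Therefore $H=\Lambda^*$, so $\rho$ is surjective and $\xg$ is connected.

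The only genuinely delicate point is the identification in the first step: that the tautological class $\Lambda$, transported through the chain $\Hom(\Lambda,H^1(X,\C^*))\cong H^1(X,\Lambda^*)\cong\Hom(\pi_1(X),\Lambda^*)$, is precisely the homomorphism whose evaluation pairing is $(\gamma,\lambda)\mapsto\rho_\lambda(\gamma)$. Pinning this down requires using the finite-order hypothesis to pass between holomorphic $\C^*$-bundles and unitary characters, so that $H^1(X,\C^*)$ may be replaced by $\Hom(\pi_1(X),\C^*)$ on the subgroup $\Lambda$, exactly as in the discussion preceding Section \ref{section-action}, and then unwinding the adjunction isomorphism. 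Once this is in place the conclusion is a formal consequence of finite Pontryagin duality, the engine being that the left kernel of the pairing is trivial by injectivity of $\Lambda\hookrightarrow H^1(X,\C^*)$, which forces surjectivity on the other factor.
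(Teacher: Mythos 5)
Your proof is correct, but it takes a genuinely different route from the paper. The paper argues by induction on a minimal set of generators of $\Lambda$: writing $\Lambda=\langle\Lambda',\lambda_k\rangle$, it assumes the cover $X_{\Lambda'}$ is connected with Galois group $\Lambda'^*$, proves that $\ker\bigl(p'^*:J(X)\to J(X_{\Lambda'})\bigr)=\Lambda'$ via the equivalence between line bundles on $X$ and $\Lambda'^*$-equivariant line bundles on $X_{\Lambda'}$, and deduces that $p'^*\lambda_k$ still has full order, so the monodromy contains an element trivial on $\Lambda'$ of order equal to that of $\lambda_k$. You instead make the monodromy $\rho:\pi_1(X)\to\Lambda^*$ explicit once and for all via the currying isomorphism $\Hom(\Lambda,\Hom(\pi_1(X),\C^*))\cong\Hom(\pi_1(X),\Lambda^*)$ (the same identification the paper uses to define the tautological class), and then apply finite Pontryagin duality: the annihilator of $\im\rho$ in $\Lambda$ is exactly the kernel of $\lambda\mapsto\rho_\lambda$, which is trivial because finite-order line bundles are faithfully encoded by their holonomy characters, so $\size{\im\rho}=\size{\Lambda^*}$ and $\rho$ is surjective. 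Both arguments ultimately rest on the same input (injectivity of $\Lambda\hookrightarrow\Hom(\pi_1(X),\C^*)$ on torsion line bundles), but your duality argument is shorter, avoids the induction, and isolates the mechanism --- injectivity on one side of a finite pairing forces surjectivity on the other. What the paper's induction buys in exchange is constructive content: it exhibits $\xg$ as a tower of cyclic covers and yields the characterization $\ker(\pg^*:J(X)\to J(\xg))=\Lambda$ recorded in the remark following the proposition, which your argument does not produce directly (though it could be recovered afterwards by the same equivariant-line-bundle argument).
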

\begin{proof}
We prove this by induction on the minimal number of generators of $\llambda$. Let $\llambda=\langle\lambda_1,\dots,\lambda_k\rangle$ be a choice of generators identifying $\llambda$ with a product of cyclic groups. Let $\llambda':=\langle\lambda_1,\dots,\lambda_{k-1}\rangle\le \llambda$ (this may be trivial). Let $p':X_{\llambda'}\to X$ be the connected étale cover determined by $\llambda$. By the induction hypothesis $p':X_{\llambda'}\to X$ is connected and has Galois group $\llambda'^*$, so it is enough to show that the monodromy of $\llambda$ contains an element of $\llambda^*$ which is trivial on $\llambda'$ and has order equal to the order of $\lambda_k$.

First note that the kernel of the pullback $p'^*:J(X)\to J(X_{\llambda'})$ is equal to $\llambda'$: indeed, there is an equivalence of categories between line bundles on $X$ and $\llambda'^*$-equivariant bundles on $X_{\llambda'}$. The only possible actions on the trivial bundle are elements of $(\llambda'^*)^*\cong\llambda'$ multiplied by the trivial action (holomorphic functions are trivial on $X_{\llambda'}$, and the composition of any action with the inverse of the trivial action is just a group of holomorphic functions), and each of those actions determines an element of $\llambda'$ and vice versa. Thus, since no power of $\lambda_k$ is in $\llambda'$ by assumption, the order of $p'^*\lambda_k$ is equal to the order of $\lambda_k$. Hence the monodromy of $p'^*\lambda_k$ must contain an element of $\C^*$ with order equal to the order of $\lambda_k$, in other words there is an element of $\pi_1(\xg)\le\pi_1(X)$ whose image $\gamma\in\llambda^*$ via holonomy satisfies that $\gamma(\lambda_k)$ has this order. Moreover, the fact that $p'^*\llambda'$ is trivial implies that $\gamma$ is trivial on $\llambda'$. 
\end{proof}

\begin{remark}
In fact, the proof of Proposition \ref{prop-XGamma-connected} provides an inductive construction of $\xg$ in terms of étale covers associated to line bundles with finite order. It also shows that we may characterize $\xg$ by the condition that $\ker(\pg^*:J(X)\to J(\xg))=\llambda$.
\end{remark}

\begin{lemma}\label{lemma-admissible}
Given an antisymmetric pairing $l:\llambda\to\llambda^*$ such that the order of a maximal isotropic subgroup $\Delta\le\llambda$ divides $n$, there exists a map $s:\llambda\to\GL(n,\C)$ making $(l,\Delta,s)$ an admissible triple. Moreover, the class of $\theta:=\Int_s$ in $\x(\llambda,\Int_s)$ is unique, i.e. it only depends on $l$.
\end{lemma}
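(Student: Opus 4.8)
The plan is to recognize an admissible $\theta=\Int_s$ as a projective representation $\Lambda\to\PGL(n,\C)$ whose commutator pairing is $l$, and to produce and classify it through the representation theory of the finite Heisenberg group attached to $(\Lambda,l)$. Write $m=n/|\Delta|$, a positive integer by hypothesis. The starting observation is that conditions (1)--(4) in Definition \ref{def-admissible-pair-representative-triple} say exactly that $s$ is a set-theoretic lift to $\GL(n,\C)$ of a homomorphism $\rho\colon\Lambda\to\PGL(n,\C)$ whose commutator $s_\lambda s_{\lambda'}s_\lambda^{-1}s_{\lambda'}^{-1}=\langle\lambda,\lambda'\rangle$ equals $l$. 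The obstruction to linearizing $\rho$ is a class in $H^2(\Lambda,\C^*)$, and for a finite abelian group the commutator map identifies $H^2(\Lambda,\C^*)$ with the group of alternating bicharacters; hence this class is precisely $[l]$, and every admissible $s$ is a linear representation of the single central extension $\mathcal{H}$ of $\Lambda$ by the roots of unity of $\C^*$ with commutator $l$, on which the center acts by the standard character.

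For existence I would describe this representation explicitly. Let $R\le\Lambda$ be the radical of $l$; then $R\le\Delta$, and $l$ descends to a nondegenerate alternating pairing on $\Lambda/R$ for which $\Delta/R$ is a Lagrangian, so $|\Delta|^2=|R|\,|\Lambda|$; in particular $|\Delta|$, and thus $m$, is determined by $l$ alone. The irreducible $\mathcal{H}$-representations with standard central scalar are the Schrödinger representations $V_\psi$, one for each $\psi\in R^*$, each of dimension $|\Delta/R|$, on which $\Delta$ acts diagonally realizing exactly the characters $\chi\in\Delta^*$ with $\chi|_R=\psi$, each once. Set $V:=\bigoplus_{\psi\in R^*}V_\psi^{\oplus m}$, of dimension $m\,|R|\,|\Delta/R|=n$, and let $s_\lambda$ be the action of any lift of $\lambda$. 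Then $\Int_s$ is a homomorphism with commutator $l$; the restriction $s|_\Delta$ is diagonal with weight set all of $\Delta^*$, each multiplicity $m$; and in the Schrödinger bases each $s_\lambda$ is a monomial matrix, i.e.\ a permutation matrix with blocks that are scalar multiples of $I_m$ and with $p(s_\lambda)=f(\lambda\Delta)^{-1}$. Thus $(l,\Delta,s)$ is a representative triple whose weights fill $\Delta^*$ with equal dimensions, hence admissible by Corollary \ref{cor-surjective-cct}.

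For uniqueness, any admissible $(l,\Delta,s)$ gives an $\mathcal{H}$-representation $V'$ with standard central scalar. Decomposing $V'=\bigoplus_{\psi}V_\psi^{\oplus n_\psi}$, the weight $\chi\in\Delta^*$ with $\chi|_R=\psi$ occurs with multiplicity $n_\psi$; admissibility forces every $\chi$ to appear with one common multiplicity $m$, so $n_\psi=m$ for all $\psi$ and $V'\cong\bigoplus_\psi V_\psi^{\oplus m}=V$. Since the $V_\psi$ depend only on $l$ and $m$ is $\Delta$-independent, this $V$ is canonically attached to $l$, so the projective representation $\Int_s=\rho$ is determined up to conjugacy in $\PGL(n,\C)$ by $l$; equivalently its class in $\x(\Lambda,\Int(\GL(n,\C)))$ depends only on $l$, independent of $\Delta$ and of the remaining choices.

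The main obstacle is the degenerate case. When $R$ is nontrivial one must organize $\Delta^*$ into the $|R|$ orbits under $f(\Lambda/\Delta)=(\Delta/R)^*$ and match them bijectively with the $|R|$ central characters $\psi\in R^*$ through the restriction $\Delta^*\to R^*$, so that the single canonical representation $\bigoplus_\psi V_\psi^{\oplus m}$ realizes every weight of $\Delta^*$ with the \emph{same} multiplicity $m$; getting this bookkeeping right, rather than the nondegenerate case, is where the work lies. Closely tied to it is the fact that $|\Delta|$, hence $m$, is the same for every maximal isotropic $\Delta$ (the standard statement that Lagrangians in a finite symplectic abelian group all have order $\sqrt{|R|\,|\Lambda|}$), which is exactly what makes the class well defined. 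The two classical inputs---the identification $H^2(\Lambda,\C^*)\cong\{\text{alternating bicharacters}\}$ and the Stone--von Neumann classification of the $V_\psi$---I would either cite or reprove by the explicit monomial-matrix computations already used in Lemmas \ref{lemma-class-representative-triple} and \ref{lemma-gltheta}.
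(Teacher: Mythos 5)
Your proof is correct, but it takes a genuinely different route from the paper's. The paper constructs $s$ by hand: it defines $s\vert_{\Delta}$ as a diagonal embedding with all weight spaces of dimension $m=n/\vert\Delta\vert$, then extends $s$ inductively over a minimal generating set of $\Lambda/\Delta$, using the permutation matrices $M^{\gamma}$ from Lemma \ref{lemma-gltheta} rescaled by a $\Delta$-matrix $d$ so that powers land on the correct diagonal elements; uniqueness is then proved by an inductive analysis showing that at each step the choice of $\Int_{s(\lambda)}$ is determined up to conjugation by $\Delta$-matrices constant on the relevant blocks, followed by a separate argument for independence of $\Delta$. You instead interpret an admissible $\theta=\Int_s$ as a projective representation of $\Lambda$ with commutator pairing $l$, i.e.\ (via $H^2(\Lambda,\C^*)\cong\{\text{alternating bicharacters}\}$) as a module over the finite Heisenberg group attached to $(\Lambda,l)$ with standard central character, and then use Stone--von Neumann: existence comes from the explicit Schr\"odinger modules $V_\psi$ (one per character $\psi$ of the radical $R$), which in their natural bases satisfy the normalizations (1)--(4) of Definition \ref{def-admissible-pair-representative-triple}, and uniqueness comes from the observation that admissibility forces all isotypic multiplicities $n_\psi$ to equal $m$, so every admissible triple yields the single module $\bigoplus_\psi V_\psi^{\oplus m}$. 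Your bookkeeping in the degenerate case is right: $R\le\Delta$, $f(\Lambda/\Delta)=(\Delta/R)^*$, the weights of $V_\psi$ are exactly the coset $\{\chi\in\Delta^*:\chi\vert_R=\psi\}$ each once, and $\vert\Delta\vert=\sqrt{\vert R\vert\,\vert\Lambda\vert}$ depends only on $l$ (this last point is the paper's Remark \ref{remark-maximal-isotropic-same-order}). What your approach buys is a structural explanation of uniqueness and a cleaner, simultaneous treatment of $\Delta$-independence (the canonical module depends only on $l$ and $n$); what it costs is importing two classical facts that the paper's self-contained matrix computations avoid. One fine point you should make explicit: an admissible $s$ determines an $\mathcal{H}$-module only after choosing an isomorphism of the pulled-back extension with $\mathcal{H}$, and this choice is ambiguous up to twisting by a character of $\Lambda$; such twists permute the $V_\psi$ (translating $\psi$ by the restriction of the character to $R$), so the multiset $\{n_\psi\}$ is only well defined up to translation --- but since your conclusion is that all $n_\psi$ are equal, the module $\bigoplus_\psi V_\psi^{\oplus m}$ is twist-invariant and the argument goes through.
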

\begin{remark}\label{remark-maximal-isotropic-same-order}
The assumption is independent of the choice of $\Delta$, since $\vert\Delta\vert$ only depends on $n$ and $l$: given two maximal isotropic groups $\Delta$ and $\Delta'$ in $\llambda$, the pairing $l$ induces an injection $\Delta'/\Delta\cap\Delta'\hookrightarrow(\Delta/\Delta\cap\Delta')^*.$ Indeed, the kernel of $\Delta'\to\Delta^*$ is $\Delta\cap\Delta'$ because otherwise the group generated by the kernel and $\Delta$ would be isotropic and would strictly contain $\Delta$, and $\Delta\cap\Delta'$ is in the kernel of $\iota^*l(\Delta')$, where $\iota:\Delta\hookrightarrow\llambda$ is the inclusion. This implies that $\vert\Delta'\vert\le\vert\Delta\vert$, and by symmetry we obtain $\vert\Delta'\vert=\vert\Delta\vert$.
\end{remark} 
\begin{proof}[Proof of Lemma \ref{lemma-admissible}]
Let $\langle\cdot,\cdot\rangle$ be the pairing associated to $l$. We define the restriction of $s$ to $\Delta$ to be an isomorphism to a subgroup of diagonal matrices of $\GL(n,\C)$ whose set of weights is $\Delta^*$ and whose weight spaces have the same dimension --- this is possible because $\vert\Delta\vert=\vert\Delta^*\vert$ divides $n$. It is easy to see that any such choice of $s\vert_{\Delta}$ provides the same class in $\x(\Delta,\Int(\GL(n,\C)))$.

Let $\overline{\lambda}_1,\dots\overline{\lambda}_m$ be a minimal set of generators for $\llambda/\Delta$, i.e. an isomorphism between $\llambda/\Delta$ and a product of finite cyclic groups. Consider the quotient $\kappa:\llambda\to\llambda/\Delta$ and assume that $s$ has been defined for a subgroup $\kappa^{-1}(H)$, where $H<\llambda/\Delta$ is generated by the first $k-1$ generators. Set $\overline{\lambda}:=\overline{\lambda_k}$ with representative $\lambda\in\llambda$ and let $H'\le \llambda/\Delta$ be the subgroup generated by $H$ and $\overline{\lambda}$. We extend $s$ over the subgroup $\kappa^{-1}(H')$ of $\llambda$ as follows: first consider a permutation matrix $\mtau$ defined as in the proof of Lemma \ref{lemma-gltheta}, where $\gamma:=l(\lambda)\in H^*$. The matrix $\mtau$ satisfies 
\begin{equation}\label{eq-commutativity-relations}
    s(h)\mtau s(h)^{-1}=\langle h,\lambda\rangle\mtau
\end{equation}
for each $h\in \kappa^{-1}(H)$ and, in particular, $p(\mtau)=\gamma\vert_{\Delta}$. Note that multiplying $\mtau$ by a $\Delta$-matrix $d$ whose restriction to
\begin{equation*}
    W_{\delta f(H)}:=\bigoplus_{\delta'\in\delta f(H)}W_{\delta'}
\end{equation*}
is a multiple of the identity for each $\delta\in\Delta^*$ preserves (\ref{eq-commutativity-relations}), where $f:H\to\Delta^*$ is the homomorphism induced by $l$. We may further choose $d$ such that $(\mtau d)^{\vert \overline{\lambda}\vert}$ is a multiple of $s(\lambda^{\vert \overline{\lambda}\vert})$. Set $s(\lambda):=\mtau d$ and define $s(h\lambda^r):=s(h)s(\lambda)^r$ for each $h\in \kappa^{-1}(H)$ and natural number $r$. Note that the elements of $s(\kappa^{-1}(H))$ commute up to multiplication by $\C^*$. Moreover, given an element $h\in H$, the fact that $s(h\lambda^r)^{\vert \kappa(h\lambda^r)\vert}$ is a multiple of the correct diagonal matrix --- namely $s((h\lambda^r)^{\vert \kappa(h\lambda^r)\vert})$, where $(h\lambda^r)^{\vert \kappa(h\lambda^r)\vert}\in\Delta$ --- follows from the corresponding properties of the generators and the fact that $s\vert_{\Delta}$ is a homomorphism. The minimality of the generators chosen in $\llambda/\Delta$ implies that the new definitions are compatible with the old ones. The fact that the map $s$ induces the pairing $l$ follows from (\ref{eq-antisymmetry}) and (\ref{eq-commutativity-relations}). 

Finally note that, given $s\vert_{\kappa^{-1}(H)}$, the choice of $s(\lambda)$ (or rather $\Int_{s(\lambda)}$) is determined by $l(\lambda)\vert_{\kappa^{-1}(H)}$ up to conjugation by a $\Delta$-matrix which is constant on each set of blocks defined by a coset of $\Delta^*/f(H)$. Of course, conjugating $s$ by such a matrix does not change $s\vert_{\kappa^{-1}(H)}$. Moreover, by induction $\Int_s\vert_{\kappa^{-1}(H)}$ is unique up to conjugation by an element $\Int_g\in\Int(\GL(n,\C))$, and it can be seen that $gs(\lambda)g^{-1}$ satisfies (\ref{eq-commutativity-relations}) if $s\vert_{\kappa^{-1}(H)}$ is replaced by $gs\vert_{\kappa^{-1}(H)}g^{-1}$. Thus the class of $\theta=\Int_s$ in $\x(\llambda,\Int(\GL(n,\C)))$ only depends on $l$ and $\Delta$. To see that it does not depend on $\Delta$ notice that, given another maximal isotropic subgroup $\Delta'\le\llambda$, by Lemma \ref{lemma-class-representative-triple} there exists a homomorphism $\theta'=\Int_{s'}$ in the class of $\theta$ such that $(l,\Delta',s')$ is admissible, and so by the previous argument the class of $\theta$ is completely determined by $l$ and $\Delta'$. Hence the class of $\theta$ does not depend on $\Delta$ or $\Delta'$ whatsoever.
\end{proof}

\subsection{Fixed points as pushforwards}

Let $(l,\Delta,s)$ be an admissible triple and $\theta:=\Int_s$, a homomorphism from $\llambda$ to $\Int(\GL(n,\C))$. In order to apply Theorem \ref{th-prym-narasimhan-ramanan} we need to describe the homomorphism $\tau$ fitting in the commutative diagramme
\begin{equation}\label{eq-commutative-diagramme-gls-glt}
    \begin{tikzcd}
\setp\arrow[r,"\Int"]\arrow[d,"\cct"]&\Aut(\glt)\\
\llambda^*\arrow[ru,"\tau"]\arrow[u,bend left,"t"]
\end{tikzcd}.
\end{equation}
This map lifts the characteristic homomorphism $\gamt\to\Out(\glt)$ of the extension $\GL(n,\C)_{\theta}$. Recall that $\setp$ consists of permutation matrices with constant blocks and, for every $a\in \setp$ and every $\Delta$-matrix $M$, the equation
$(aMa^{-1})_{\delta}=M_{(\iota^*\cct(a))\delta}$ holds for each $\delta\in\Delta^*$. Hence, for each $\lambda\in\llambda$, $\tau(\lambda)$ permutes the blocks of a matrix in $\GL(n,\C)^{\theta}$ according to the permutation of $\Delta^*$ given by multiplication by $\iota^*(\lambda)^{-1}$ --- recall that each block is labeled with an element of $\Delta^*$, as every element of $\GL(n,\C)^{\theta}$ is a $\Delta$-matrix. Since every element $M\in\GL(n,\C)^{\theta}$ satisfies $M_{\delta}=M_{\delta'}$ for each $\delta$ and $\delta'$ in the same coset of $\Delta^*/f(\llambda/\Delta)$, the automorphism $\tau(\lambda)$ is actually determined by the action of the coset of $\iota^*(\lambda)^{-1}$ on $\Delta^*/f(\llambda/\Delta)$. By choosing a map $t:\llambda^*\to \setp$ such that $\cct\circ t=1$ we obtain an isomorphism $\gls\cong\glt\times_{(\tau,c)}\llambda^*$ for a suitable 2-cocycle $c\in Z_{\tau}^2(\llambda^*,Z(\glt))$ as in the proof of Proposition $\ref{prop-extensions-isomorphic-twisted-group}$. For example, we could take $t$ to be the inverse of the bijection (\ref{eq-mtau-gamt}).

Let $\pg:\xg\to X$ be the --- connected by Proposition \ref{prop-XGamma-connected} --- étale cover with Galois group $\llambda^*$ determined by $\llambda$. We may simplify the description of $(\tau,c)$-twisted $\llambda^*$-equivariant $\glt$-bundles over $\xg$ as follows.

There is an action of $l(\llambda)$, with kernel $l(\Delta)$, on the moduli space
$M(\xd,{\GL(n/\vert\Delta\vert,\C)})$ 
of vector bundles of rank $n/\vert\Delta\vert$-bundles over $\xd$, such that $l(\lambda)\in l(\llambda)$ sends a vector $W$ to $l(\lambda)^*W\otimes \pd^*\lambda$. Here we are regarding $l(\lambda)$ as an element of $\Delta^*\cong\gal(\xd/X)$ by restriction. We denote by $M(\xd,{\GL(n/\vert\Delta\vert,\C)})^{l(\llambda)}$ the corresponding fixed point subvariety, consisting of vector bundles $W$ such that
\begin{equation}\label{eq-W-l(Gamma)}
    W\cong l(\lambda)^*W\otimes \pd^*\lambda
\end{equation}
for every $\lambda\in\llambda$.
 The pushforward map restricts to a morphism
\begin{equation*}
p_{\Delta*}:M(\xd,{\GL(n/\vert\Delta\vert,\C)})^{l(\llambda)}\to M(X,{\GL(n,\C)}).
\end{equation*}

\begin{proposition}\label{prop-fixed-points-as-pushforwards}
    If $\wcM(\xg,\GL(n,\C)^{\theta},\llambda^*,\tau,c)$ is the image of $M(\xg,\GL(n,\C)^{\theta},\llambda^*,\tau,c)$ in $M(X,\GL(n,\C))$ by the morphism (\ref{eq-narasimhan-ramanan-gs}), then 
    \begin{equation*}
        \wcM(\xg,\GL(n,\C)^{\theta},\llambda^*,\tau,c)=p_{\Delta*}M(\xd,\GL(n/\vert\Delta\vert,\C))^{l(\llambda)}.
    \end{equation*}
\end{proposition}
\begin{proof}
    First let $F$ be a polystable $(\tau,c)$-twisted $\llambda^*$-equivariant $\glt$-bundle with $\llambda^*$-action $\wchi$. The corresponding vector bundle $F(\C^n)$ is equal to a direct sum of $\vert\Delta^*\vert=\vert\Delta\vert$ vector bundles of rank $m:=n/\vert\Delta\vert$ such that any two summands corresponding to elements $\delta$ and $\delta'$ in the same coset of $\Delta^*/f(\llambda/\Delta)$ are isomorphic. Set
$F(\C^n)=\bigoplus_{\delta\in\Delta^*}V_{\delta},$
where $V_{\delta'}\cong V_{\delta}$ for every $\delta$ and $\delta'\in\Delta^*$ such that $\delta^{-1}\delta'\in f(\llambda/\Delta)$. The $(\tau,c)$-twisted $\llambda^*$-equivariant action on $F$ induces, by Proposition \ref{prop-associated-bundle-equivariant}, an equivariant action of $\llambda^*$ on $F(\C^n)$. For each $(e,v)\in F(\C^n)$ and $\gamma\in\llambda^*$, this is given by
\begin{equation}\label{eq-action-on-F(Cn)-jacobian}
    (e,v)\cdot\gamma:=(e\cdot\gamma,t(\gamma)^{-1}v).
\end{equation}
Note that $t(\gamma)=M^{\gamma}$ --- recall that $M^{\gamma}$ is defined in the proof of Lemma \ref{lemma-gltheta} --- is a permutation matrix such that $p(t(\gamma))=\gamma^{-1}\vert_{\Delta}$, see (\ref{eq-mtau}). Therefore, $V_{\delta}\cdot\gamma=V_{\delta\gamma^{-1}}$ for each $\delta\in\Delta^*$. In particular, $\gamma$ preserves $V_{\delta}$ if and only if $\gamma\in\ker (\Gamma\to\Delta^*)$. Moreover, $V_{\delta}\cong \gamma^*V_{1}$ for every $\delta\in\Delta^*$, where $\gamma\in\llambda^*$ is any element such that $\gamma\vert_{\Delta}=\delta$. There is also an isomorphism $l(\lambda)^*V_{\delta}\cong V_{\delta}$ for each $\lambda\in\llambda$.

We claim that $\ker (\Gamma\to\Delta^*)=l(\Delta)$. Indeed, note that there is a natural isomorphism $\Delta^*\cong\llambda^*/l(\Delta)$ given by the short exact sequence of abelian groups
\begin{equation*}
    1\to (\llambda/\Delta)^*\to\llambda^*\to\Delta^*\to 1
\end{equation*}
and the fact that the homomorphism $\Delta\to (\llambda/\Delta)^*$ induced by $l$ is surjective --- this follows from injectivity of $f$. Hence, if we set $V:=V_1$, then $F(\C^n)=\bigoplus_{\gamma l(\Delta)\in\Gamma^*/l(\Delta)}\gamma^*V$. Since $\llambda^*$ is abelian, the actions of any two elements on $F(\C^n)$ commute and so
\begin{equation*}
    F(\C^n)/l(\Delta)=\bigoplus_{\gamma l(\Delta)\in\Gamma^*/l(\Delta)}\gamma^*V/l(\Delta)=\bigoplus_{\delta\in \Delta^*}\delta^*W,
\end{equation*}
where $W:=V/l(\Delta)$ is a vector bundle of rank $n/\vert\Delta\vert$ over $\xd$. 

For each $\lambda\in \Delta$, the equation $t(l(\lambda))=s(\lambda)$ holds. In particular, $\langle\lambda,\delta'\rangle t(l(\lambda))_{\delta}=t(l(\lambda))_{\delta\delta'}$ for each $\delta,\delta'\in\Delta^*$. Therefore, by (\ref{eq-action-on-F(Cn)-jacobian}), for each $\lambda\in \llambda$ the action of $l(\delta)$ on $l(\lambda)^*V\cong V$ is equal to that on $V$ multiplied by $\langle\lambda,\delta\rangle$. But the pullback $\pg^*\lambda$ equipped with its holonomy $\llambda^*$-action is isomorphic to the trivial line bundle $\oo\to\xg$ equipped with the action given by the canonical $\llambda^*$-equivariant action
multiplied by $\lambda^{-1}$. Therefore, there is an isomorphism $l(\lambda)^*V/l(\Delta)\cong W\otimes \pd^*\lambda^{-1}$, in other words
\begin{equation*}
    W\cong l(\lambda)^*W\otimes \pd^*\lambda
\end{equation*}
for each $\lambda\in\llambda$. This is precisely (\ref{eq-W-l(Gamma)}).

By Proposition \cite[Proposition 4.1]{oscar-suratno} the polystability of $(F,\cdot)$ implies that $F$ is polystable, hence so is its extension of structure group to $\GL(n,\C)$ by Proposition \ref{prop-polystability-extension-structure-group}. Equivalently, its associated bundle $F(\C^n)$ is polystable. Since it is the direct sum of the pullbacks of $V$, which have all the same slope, $V$ is polystable. Therefore, $W$ is polystable --- this follows from the fact that a subbundle of $W$ provides a subbundle of $V$ by pullback. We conclude that $W\in \mdl(\xd,\GL(n/\vert\Delta\vert,\C))^{l(\llambda)}$.

Conversely, given a polystable vector bundle $W\in \mdl(\xd,\GL(n/\vert\Delta\vert,\C))^{l(\llambda)}$, let $V$ be its pullback to $\xg$ via the natural projection $\xg\to\xd$. There is a $\llambda^*$-equivariant action on $\bigoplus_{\gamma l(\Delta)\in\Gamma^*/l(\Delta)}\gamma^*V$. By (\ref{eq-W-l(Gamma)}) $V\cong l(\lambda)^*V$ for each $\lambda\in\llambda$, hence there is a reduction of the bundle of frames $P$ of $\bigoplus_{\gamma l(\Delta)\in\Gamma^*/l(\Delta)}\gamma^*V$ to $\glt$, which we denote by $F$. 

The natural $\Delta^*$-equivariant action on the vector bundle $\bigoplus_{\delta\in\Delta^*}\delta^*W$ induces a $\llambda^*$-action on $\bigoplus_{\gamma l(\Delta)\in\Gamma^*/l(\Delta)}\gamma^*V$, which we denote by $\cdot$. Let $f_{\lambda}:V\cong l(\lambda)^*V$ be the pullback of (\ref{eq-W-l(Gamma)}) and consider the isomorphism $\phi_{\gamma}$ fitting in the commutative diagramme
\begin{equation*}
        \begin{tikzcd}
V\arrow[r,"\phi_{\gamma}"]\arrow[d,"f_{\lambda}"]&\gamma^*V\arrow[d,"\gamma^*f_{\lambda}"]\\
l(\lambda)^*V\arrow[r,"\cdot\gamma"]&l(\lambda)^*\gamma^*V.
\end{tikzcd}
\end{equation*}
If $W$ is stable, it is simple and so $\phi_{\gamma}$ and $\cdot\gamma:V\to\gamma^*V$ must differ by a constant. Otherwise, we may change $\phi_{\gamma}$ so that it sends each stable summand of $W$ to the same stable summand of $l(\lambda)^*W$ as $\bullet\cdot\gamma$ and we still conclude that they differ by a constant, which we denote by $k^1_{\lambda,\gamma}
$. In other words,
\begin{equation*}
    \bullet\cdot\gamma\vert_{V} = k^1_{\lambda,\gamma}\phi_{\gamma}
\end{equation*}
Consider the constant 
\begin{equation*}
    k^2_{\lambda,\gamma}:=t(\gamma^{-1})_{1,\gamma}^{-1}t(\gamma^{-1})_{\lambda^{-1},\lambda^{-1}\gamma},
\end{equation*}
and let $k_{\lambda,\gamma}:=k^2_{\lambda,\gamma}(^1_{\lambda,\gamma}\phi_{\gamma})^{-1}$. Note that $k_{\lambda,l(\delta)}=1$ for each $\delta\in\Delta$, since $t(l(\delta))=s(\delta)$ and so
\begin{equation*}
    k^2_{\lambda,l(\delta)}=\langle\lambda,\delta\rangle,
\end{equation*}
which is equal to $k^1_{\lambda,\gamma}$ by (\ref{eq-W-l(Gamma)}). Therefore, rescaling $\bullet\cdot\gamma\vert_{\lambda^*V}$ by $k_{\lambda,\gamma}$ does not change the quotient of $\bigoplus_{\gamma l(\Delta)\in\Gamma^*/l(\Delta)}\gamma^*V$ by $l(\Delta)$, which is still $\bigoplus_{\delta\in\Delta^*}\delta^*W$ equipped with the natural $\Delta^*$-equivariant action. The resulting set of isomorphisms $\{\bullet\cdot\gamma k_{\lambda,\gamma}\}$ is indeed a honest group action, since $t(\gamma)t(\gamma')$ and $t(\gamma\gamma')$ differ by an element of $Z(\glt)$, which does not change $k^2_{\lambda,l(\delta)}$ for any $\lambda\in\llambda$. 

Thus, without loss we may replace $\bullet\cdot\gamma$ with $\bullet\cdot\gamma k_{\lambda,\gamma}$ and assume that $\phi_{\gamma}$ differs from $\bullet\cdot\gamma$ from the same constant as the one that distinguishes the restrictions of $t(\gamma^{-1})$ to the $1$-weight space and the $l(\lambda)$-eigenspace. Let $*$ denote the induced $\llambda^*$-action on $P$. Then $F* l(\lambda)=Ft(\gamma^{-1})$. This implies that the corresponding $(\Int_{t},c)$-twisted $\llambda^*$-equivariant action on $P$, given by $\bullet*\gamma t(\gamma)$ for each $\gamma\in\llambda^*$, preserves $F$. Hence $F$ inherits a $(\tau,c)$-twisted $\llambda^*$-equivariant action by Proposition \ref{prop-associated-bundle-equivariant}.

Using Proposition \ref{prop-associated-bundle-equivariant} again, the vector bundle corresponding to the $\gls$-bundle $E$ given as the image of $F$ by (\ref{eq-narasimhan-ramanan-gs}) is
\begin{equation*}
    E(\C^n)=F(\C^n)/\llambda^*\cong \bigoplus_{\delta\in\Delta^*}\delta^*W/\Delta^*\cong p_{\Delta*}W.
\end{equation*}
Hence, the proposition follows.
\end{proof}

We can now state the main result of Section \ref{section-jacobian}.

\begin{theorem}\label{th-finite-group-jacobian}
The inclusions
$$\bigcup_{l}p_{\Delta*}\mdl(\xd,\GL(n/\vert\Delta\vert,\C))^{l(\llambda)}\subset\mdl(X,\GL(n,\C))^{\llambda}$$
and
$$\mdl_*(X,\GL(n,\C))^{\llambda}\subset\bigcup_{l}p_{\Delta*}\mdl(\xd,\GL(n/\vert\Delta\vert,\C))^{l(\llambda)}$$ hold,
where $\mdl(\xd,\GL(n/\vert\Delta\vert,\C))^{l(\llambda)}$ is the subvariety of $\mdl(\xd,\GL(n/\vert\Delta\vert,\C))$ consisting of vector bundles $W$ satisfying (\ref{eq-W-l(Gamma)}). The parameter $l$ runs over all antisymmetric pairings on $\llambda$ such that the order of a maximal isotropic subgroup $\Delta$ divides $n$. The choice of $\Delta$ is fixed for each $l$.
\end{theorem}

\begin{proof}
Follows from Lemmas \ref{lemma-class-representative-triple} and \ref{lemma-admissible}, Proposition \ref{prop-fixed-points-as-pushforwards}, Theorems \ref{th-fixed-points-oscar-ramanan} and \ref{th-prym-narasimhan-ramanan} and Corollary \ref{cor-moduli-non-empty}. 
\end{proof}
\begin{remark}
    By an argument similar to the proof of \cite[Proposition 3.1]{narasimhan-ramanan}, we know that the pushforward of a vector bundle $E$ over $\xd$ is stable if and only if $E$ is stable and non-isomorphic to any of its pullbacks by $\gal(\xd/X)$. Moreover, the pushforwards of two such vector bundles are isomorphic if and only if they are pullbacks of each other. This, together with Theorem \ref{th-finite-group-jacobian}, provides a complete description of the smooth fixed point locus.
\end{remark}

\subsection{Fixed points with fixed determinant}

For each antisymmetric pairing $l$ of $\llambda$ fix a maximal isotropic subgroup $\Delta\le \llambda$. Consider the 1-dimensional representation $\Delta^*\to\GL(\bigwedge^{\text{top}} \C[\Delta^*])$, induced by the permutation representation of $\Delta^*$. This is a character of $\Delta^*$, hence it determines an element $\delta_l\in\Delta$ of order at most two. Given a vector bundle $E$ in $\isoc(\xd,\GL(n/\vert\Delta\vert))$, by an argument similar to the proof of \cite[Proposition 3.23]{nasser},
\begin{equation*}
    \det p_{\Delta*}E=\Nmd(\det E)\otimes \delta_l,
\end{equation*}
where $\Nmd:J(\xd)\to J(X)$ is the norm map. Let
\begin{equation*}
    \vartheta_{l}:=\Nmd\circ\det:M(\xd,\GL(n/\vert\Delta\vert))\to J(X)
\end{equation*}
and consider the variety $\vartheta_{l}^{-1}(\delta_l)^{l(\llambda)}\subset M(\xd,\GL(n/\vert\Delta\vert))^{l(\llambda)}$.

\begin{corollary}
The inclusions
$$\bigcup_{l}p_{\Delta*}\vartheta_{l}^{-1}(\delta_l)^{l(\llambda)}\subset\mdl(X,\SL(n,\C))^{\llambda}$$
and
$$\mdl_*(X,\SL(n,\C))^{\llambda}\subset\bigcup_{l}p_{\Delta*}\vartheta_{l}^{-1}(\delta_l)^{l(\llambda)}$$ hold,
where $l$ runs over all antisymmetric pairings on $\llambda$ such that the order of a maximal isotropic subgroup $\Delta$ divides $n$.

\end{corollary}

\section{Action of a finite group of line bundles of order 2 \texorpdfstring{on $\mdl(X,\Sp(2n,\C))$}{on the moduli space of symplectic bundles}}\label{section-example-sp-finite}

Let $\Sp(2n,\C)$ be the symplectic group in $\GL(2n,\C)$ for the standard symplectic structure. The centre  is $\{1,-1\}\cong\Z/2\Z$. Let $\llambda\subset H^1(X,\Z/2\Z)$ be any subgroup. Note that, via the Narasimhan--Seshadri Theorem, we may identify $H^1(X,\Z/2\Z)$ with the subgroup of $J(X)$ consisting of elements of order at most 2. Since $J(X)$ is an abelian variety of dimension equal to the genus $g$ of $X$, this is isomorphic to $(\Z/2\Z)^{2g}$. We adapt the definitions and arguments of Section \ref{section-jacobian} to describe $\mdl(X,\Sp(2n,\C))^{\llambda}$, dividing the argument in analogous sections to emphasize the parallelism.

\subsection{Antisymmetric pairings and the character variety \texorpdfstring{$\x(\llambda,\Int(\Sp(2n,\C))$}{}}

\begin{definition}\label{def-admissible-pair-representative-triple-sp}
Recall that an \textbf{antisymmetric pairing} on $\llambda$ is a homomorphism $$l:\llambda\to\llambda^*:=\Hom(\llambda,\Z/2\Z)\cong\Hom(\llambda,\C^*)$$ such that every element of $\llambda$ pairs trivially with itself. We may choose a maximal \textbf{isotropic} subgroup $\iota:\Delta\hookrightarrow\llambda$ where the pairing is trivial, and there is an induced injection $f:\llambda/\Delta\hookrightarrow\Delta^*$.

A \textbf{representative quadruple} for $\llambda$ is a quadruple $(l,\Delta,q,s)$, where $l:\llambda\to\llambda^*$ is an antisymmetric pairing, $\Delta$ is a maximal isotropic subgroup, $q\in\Delta^*$ and $s:\llambda\to\Sp(2n,\C)$ is a map satisfying that:
\begin{enumerate}
    \item It induces a homomorphism $\Int_s:\llambda\to\Int(\Sp(2n,\C))$.
    \item It restricts to a map $\Delta\to\C^{*2n}\subset\GL(2n,\C)$.
    \item The antisymmetric pairing
    $$\llambda\to\llambda^*;\,\lambda\mapsto(\lambda'\mapsto \sg s_{\lambda'}\sg^{-1}s_{\lambda'}^{-1})$$
    is equal to $l$. In particular, for every $\lambda\in\llambda$ the element $\sg$ is a permutation matrix such that
    $
    p(\sg)=\iota^*l(\lambda).    
    $
    \item The image of $s$ consists of permutation matrices whose blocks are multiples of the identity.
    \item For each $\delta\in\Delta$, the eigenspaces of $\C^{2n}$ for the action of $s(\delta)$ are either isotropic --- i.e. the restriction of the symplectic form is trivial --- or symplectic --- i.e., the restriction of the symplectic form is non-degenerate. This provides a homomorphism $\Delta\to\Z/2\Z$ which maps $\lambda$ to $-1$ if the eigenspaces are isotropic and to $1$ otherwise, which must be equal to $q$.
    \end{enumerate}
We denote by $q$ the \textbf{characteristic homomorphism} of $s$.
\end{definition}

The only statement in (5) which is not tautological is the fact that the characteristic homomorphism is equal to $q$: note that (1) imposes that the eigenvalues of $s(\delta)$ be either $\pm 1$ or $\pm i$ for each $\delta\in \Delta$, thus it only has two eigenspaces. The fact that $s(\delta)$ preserves the symplectic form implies that the eigenspaces are isotropic if and only if the eigenvalues are $\pm i$ (with each isotropic subspace having opposite eigenvalue), and they are $\pm 1$ if and only if the eigenspaces are symplectic. It follows from (2) that the characteristic homomorphism is an actual homomorphism: given $\delta$ and $\delta'\in\Delta$, the eigenspaces of $s(\delta\delta')$ are isotropic/symplectic if and only if the eigenspaces of $s(\delta)s(\delta')$ are isotropic/symplectic --- by (1) $s(\delta\delta')$ and $s(\delta)s(\delta')$ differ by a constant ---, if and only if the eigenvalues of $s(\delta)s(\delta')$ are $\pm i$/$\pm 1$. These are the product of the eigenvalues of $s(\delta)$ and $s(\delta')$, hence the result reduces to checking the possibilities.

\begin{lemma}\label{lemma-class-representative-triple-sp}
For every class in the character variety $\x(\llambda,\Int(\Sp(2n,\C)))$ there exists a representative quadruple $(l,\Delta,q,s)$ such that $\Int_s$ is in the class.
\end{lemma}
\begin{proof}
Let 
$$\theta:\llambda\to\Int(\Sp(2n,\C));\,\lambda\mapsto\Int_{\sg}$$
be a homomorphism.
Since $\llambda$ is abelian, we obtain an antisymmetric pairing
$$l:\llambda\to\llambda^*;\,\lambda\mapsto(\lambda'\mapsto \sg s_{\lambda'}\sg^{-1}s_{\lambda'}^{-1}).$$
Choose a maximal isotropic subgroup $\iota:\Delta\hookrightarrow\llambda$ and consider the corresponding injection
$f:\llambda/\Delta\hookrightarrow\Delta^*.$ Since the elements in $s(\Delta)$ are semisimple --- the square of each of them is in $\Z/2\Z\subset\C^*$ --- and commute with each other, they can be simultaneously diagonalised by symplectic matrices --- this follows, for example, from the fact that every two maximal tori in $\Sp(2n,\C)$ are conjugate to each other. Thus we may assume, after conjugating $\theta$ if necessary, that they are all diagonal.

Let $q\in \Delta^*$ be the characteristic homomorphism of $s$. For each $\delta\in\Delta$, the diagonal matrix $s(\delta)$ has eigenvalues $\pm 1$ if $q(\delta)=1$ and $\pm i$ otherwise. Thus we may further assume after rescaling by $\pm 1$ that the first vector of the standard basis of $\C^{2n}$ has eigenvalue $1$ or $-i$. For convenience we redefine the eigenvalues of $s(\delta)$ so that they are always $\pm1$ by multiplying them by $i$ if $q(\delta)=-1$. Note that, under this convention, the first vector of the standard basis always has eigenvalue $1$. A simultaneous eigenspace for the action of $s(\Delta)$ is called a weight space, whose corresponding weight is an element of $s(\Delta)^*\le\Delta^*$ which associates its eigenvalue --- in this new sense, so that it takes values $\pm 1$ --- to each $\delta\in\Delta$. The fact that each weight is a homomorphism $\Delta\to\C^*$ follows from the first vector of the standard basis having weight 1, since $s(\delta\delta')=\pm s(\delta)s(\delta')$ for every $\delta$ and $\delta'\in\Delta$. Note that, since the only possible eigenvalues are $1$ and $-1$, the eigenspaces of each element in $s(\Delta)$ determine that element. This implies that the set of weights is equal to $s(\Delta)^*\le\Delta^*$. 


As in the proof of Lemma \ref{lemma-class-representative-triple}, the homomorphism 
$\llambda/\Delta\xrightarrow{ps}\Delta^*$
induced by $p$ is equal to the multiplicative inverse of $f$, which in this case is equal to $f$. Given a weight $\delta\in s(\Delta)^*\subset\Delta^*$ we obtain an orbit of weights $\delta f(\llambda/\Delta)$, and the dimensions of all the weight spaces in a given orbit must be equal. In particular the subgroup $f(\llambda/\Delta)$ of $\Delta^*$ must be contained in $s(\Delta)^*$. For each $\delta\in s(\Delta)^*$ we denote by $W_{\delta}$ the corresponding weight space.

As in the proof of Lemma \ref{lemma-class-representative-triple}, we choose representatives $\lambda\in \llambda$ of each coset $\lambda\Delta\in\llambda/\Delta$ and show that there exists a symplectic $\Delta$-matrix $S$ such that $Ss(\lambda)S^{-1}$ is a permutation matrix with blocks in $\C^*$ for each $\lambda\Delta\in\llambda/\Delta$. Choose a representative $\delta\in s(\Delta)^*$ of each coset $\delta f(\llambda/\Delta)\in s(\Delta)^*/f(\llambda/\Delta)$. We split the argument into three cases: first assume that the characteristic homomorphism $q\in \Delta^*$ is non-trivial and $q\in f(\llambda/\Delta)$. Choose an element $\delta_q\in q^{-1}(-1)$ and consider the subgroup $\ker \delta_q<f(\llambda/\Delta)$, which has degree 2 (because $\delta_q\in f(\llambda/\Delta)^*$ has order at most two and $q(\delta_q)=-1$). Consider the $\Delta$-matrix $S$ determined by $S_{\delta\delta'}=s(\lambda_{\delta'})_{\delta\delta',\delta}^{-1}$ and
$S_{(q\delta)\delta'}=s(\lambda_{\delta'})_{q\delta\delta',q\delta}^{-1},$
where $\delta'\in \ker \delta_q$, $\lambda_{\delta'}\in \llambda$ is any element whose coset $\lambda_{\delta'}\Delta\in\llambda/\Delta$ is equal to $f^{-1}(\delta')$ and $\delta\in s(\Delta)^*$ is the representative of $\delta f(\llambda/\Delta)$.
Note that there is a decomposition into symplectic subspaces
$$\C^{2n}=\bigoplus_{\delta\in\ker \delta_q}W_{\delta}\oplus W_{q\delta},$$ 
where $W_{\delta}$ is isotropic. This follows from the fact that $W_{q\delta}$ is contained in the same eigenspaces as $W_{\delta}$ for elements in $\ker q$, which have symplectic eigenspaces, and in different eigenspaces for elements in $q^{-1}(-1)$, which decompose $\C^{2n}$ into maximal isotropic subspaces. Thus $s(\lambda_{\delta'})_{\delta\delta',\delta}^{-1}\oplus s(\lambda_{\delta'})_{q\delta\delta',q\delta}^{-1}$ may be regarded as an automorphism of $\C^{\dim W_{\delta}}\oplus\C^{\dim W_{\delta}}$ preserving the standard symplectic form, which shows that $S\in \Sp(2n,\C)$. The same calculation as in the proof of Lemma \ref{lemma-class-representative-triple} shows that $Ss(\lambda)S^{-1}$ has constant blocks for every $\lambda\in\llambda$ such that $l(\lambda)\in \ker \delta_q$. We take this for granted hereafter.

Now choose $\lambda_q\in \llambda$ such that $f(\lambda_q\Delta)=q$. Since $\lambda_q$ has order two we know that, for each $\delta\in\ker\delta_q\le s(\Delta)^*$, $s(\lambda_q)_{q\delta,\delta}s(\lambda_q)_{\delta,q\delta}$ is equal to $1$ or $-1$. Since $s(\lambda_q)$ preserves the symplectic form, $s(\lambda_q)_{q\delta,\delta}=- s(\lambda_q)_{\delta,q\delta}^{t-1}$ (recall that the restriction of the standard symplectic form to $W_{\delta}\oplus W_{q\delta}$ is the standard one in lower dimension). Thus $s(\lambda_q)_{\delta,q\delta}$ is either symmetric or skew-symmetric and so it is diagonalizable, which implies that it has a square root $s(\lambda_q)_{\delta,q\delta}^{1/2}$ in $\GL(\dim W_{\delta},\C)$. Now let $S'\in\GL(2n,\C)$ be the $\Delta$-matrix such that $S'_{\delta}=s(\lambda_q)_{\delta,q\delta}^{-1/2}$ and $S'_{q\delta}=s(\lambda_q)_{\delta,q\delta}^{1/2}$ for every $\delta\in \ker \delta_q\le s(\Delta)^*$. The matrix $S'$ is a multiple of a symplectic matrix because, for every (skew-)symmetric matrix $A$,
\begin{align*}
    \begin{pmatrix}
    A^{1/2}   &   0\\
    0   &   A^{-1/2}
\end{pmatrix}
\begin{pmatrix}
    0 &   I_{2n/\vert\Delta\vert}\\
    -I_{2n/\vert\Delta\vert}   &   0
\end{pmatrix}
\begin{pmatrix}
    (A^{1/2})^t   &   0\\
    0   &   (A^{-1/2})^t
\end{pmatrix}=\\
\begin{pmatrix}
    0   &   A^{1/2}\\
     -A^{-1/2}   &  0
\end{pmatrix}
\begin{pmatrix}
    \pm A^{1/2}   &   0\\
    0   &   \pm A^{-1/2}
\end{pmatrix}=\\
\begin{pmatrix}
    0 &   \pm I_{2n/\vert\Delta\vert}\\
    \mp I_{2n/\vert\Delta\vert}   &   0
\end{pmatrix}.
\end{align*}
Moreover, since $\Int_s$ is a homomorphism, $s(\lambda)$ has constant blocks for every $\lambda\in\llambda$ such that $l(\lambda)\in \ker \delta_q$ and $f(\llambda/\Delta)=\ker \delta_q\sqcup q\ker \delta_q$, we know that $s(\lambda_q)_{\delta\delta',q\delta\delta'}$ is a multiple of $s(\lambda_q)_{q\delta,\delta}$ for every $\delta'\in f(\llambda/\Delta)$. This implies that the restrictions of $S'$ to $W_{\delta}$ and $W_{\delta\delta'}$ differ by a constant and so a permutation matrix $M$ with $p(M)\in f(\llambda/\Delta)$ satisfies $(S'MS'^{-1})_{\delta}=c M_{\delta}$ for some $c\in\C^*$. Therefore, $S's(\lambda)S'^{-1}$ still has constant blocks for every $\lambda\in\llambda$ such that $l(\lambda)\in \ker \delta_q$. Hence, since $\Int_s$ is a homomorphism, in order to prove that the image of $S'sS'^{-1}$ consists of permutation matrices with constant blocks it is enough to show that $S's(\lambda_q)S'^{-1}$ has constant blocks:
$$(S's(\lambda_q)S'^{-1})_{\delta,q\delta}=S'_{\delta}s(\lambda_q)_{\delta,q\delta}S'^{-1}_{q\delta}=s(\lambda_q)_{\delta,q\delta}^{-1/2}s(\lambda_q)_{\delta,q\delta}s(\lambda_q)_{\delta,q\delta}^{-1/2}=1$$
and
\begin{align*}
  (S's(\lambda_q)S'^{-1})_{q\delta,\delta}=S'_{q\delta}s(\lambda_q)_{q\delta,\delta}S'^{-1}_{\delta}=s(\lambda_q)_{\delta,q\delta}^{1/2}s(\lambda_q)_{q\delta,\delta}s(\lambda_q)_{\delta,q\delta}^{1/2}=\\
  \pm s(\lambda_q)_{\delta,q\delta}^{1/2}s(\lambda_q)_{\delta,q\delta}^{-1}s(\lambda_q)_{\delta,q\delta}^{1/2}=\pm1
\end{align*}
for each $\delta\in\ker\delta_q$.

It remains to consider the cases when either $q$ is trivial or $q\notin f(\llambda/\Delta)$. If $q$ is trivial then the weight spaces are all symplectic vector spaces with the standard symplectic form. The elements of $s(\llambda)$ are permutation matrices whose blocks preserve this form, hence the matrix $S$ defined in the proof of Lemma \ref{lemma-class-representative-triple}, which is a $\Delta$-matrix built up from these blocks, must be symplectic. If $q$ is not trivial and $q\notin f(\llambda/\Delta)$, choose $\delta_q\in q^{-1}(-1)$ as before. In this situation the kernel of $\delta_q$ in $f(\llambda/\Delta)$ is equal to the whole $f(\llambda/\Delta)$, so the matrix $S$ which we defined when addressing the first case (with non-trivial $q\in f(\llambda/\Delta)$) does the trick.
\end{proof}

\begin{definition}\label{def-weight-sp}
Consider a representative quadruple $(l,\Delta,q,s)$ so that the eigenvalues of $s_{\lambda}$ are $\pm 1$ or $\pm i$ for each $\lambda\in\llambda$. Assume that the eigenvalues of the first vector of the standard basis of $\C^{2n}$ take values $1$ or $-i$. Following the proof of Lemma \ref{lemma-class-representative-triple-sp}, for the purposes of Section \ref{section-example-sp-finite} we redefine the concept of an \textbf{eigenvalue} of $\sg$ as follows: if it is $\pm 1$ then it is the classical number, and if it is $\pm i$ we multiply it by $i$. Hence the eigenvalues of $\sg$, according to the new definition, are always equal to $\pm 1$. Similarly, we obtain a new concept of \textbf{weight} for $s$, which is an element in $\Delta^*$.
\end{definition}

\subsection{The homomorphism \texorpdfstring{$\cct$}{c sub theta}}

Let $\theta=\Int_s$ be the homomorphism corresponding to a representative quadruple $(l,\Delta,q,s)$.
The group $\Sp(2n,\C)^{\theta}$ consists of all the symplectic $\Delta$-matrices $M$ such that $M_{\delta}=M_{\delta'}$ whenever $\delta$ and $\delta'$ are elements of $ s(\Delta)^*$ in the same orbit of $ s(\Delta)^*/f(\llambda/\Delta)$. From Section \ref{section-Gtheta} there is a homomorphism
$$\cct:\Sp(2n,\C)_{\theta}\to \llambda^*.$$

\begin{lemma}\label{lemma-sptheta}
The image of $\cct$ is equal to
$$\sett:=\{\gamma\in\llambda^*\suhthat \gamma\vert_{\Delta}\in s(\Delta)^*\andd \dim W_{\gamma\delta}=\dim W_{\delta}\forevery \delta\in s(\Delta)^*\}.$$
Moreover, there is a subgroup $\setp<\Sp(2n,\C)_{\theta}$ containing the centre  $Z(\Sp(2n,\C)^{\theta})$ of $\Sp(2n,\C)^{\theta}$ such that the restriction $\cct\vert_{\setp}$ induces an isomorphism
$$\setp/Z(\Sp(2n,\C)^{\theta})\cong \sett.$$
\end{lemma}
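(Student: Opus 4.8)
The plan is to transcribe the construction from the proof of Lemma~\ref{lemma-gltheta} and then impose the symplectic condition on top of it. For each $\gamma\in\sett$ I would define a permutation matrix $\mtau$ by the same block formula used there, namely
$$\mtau_{\delta'\gamma,\delta'}:=s(\lambda)_{\delta'\gamma,\delta\gamma}\,s(\lambda)_{\delta',\delta}^{-1}\,\gamma(\lambda)^{-1},$$
where $\delta$ is the chosen representative of the orbit $\od$ and $\lambda\in\Lambda$ is any element with $f(\ol)=\delta'\delta^{-1}$. The proofs that this is independent of $\lambda$, that $p(\mtau)=\gamma\vert_{\Delta}$, and that $\Int_{\sg}(\mtau)=\gamma(\lambda)\mtau$ for all $\lambda$, use only that $\Int_s$ is a homomorphism with $p(\sg)=\iota^*l(\lambda)$, so they carry over word for word.

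The genuinely new point, which I expect to be the main obstacle, is to arrange $\mtau\in\Sp(2n,\C)$ rather than merely in $\GL(2n,\C)$. I would begin by recording how $\omega$ couples the weight spaces: since each $s(\delta')$ preserves $\omega$, the pairing $\omega(W_{\delta_1},W_{\delta_2})$ can be nonzero only when the normalised eigenvalues on the two spaces are inverse, and the normalisation convention of Lemma~\ref{lemma-class-representative-triple-sp} shows this happens exactly for $\delta_2=q\delta_1$. Hence when $q$ is trivial every $W_{\delta}$ is a symplectic subspace, and when $q$ is nontrivial every $W_{\delta}$ is isotropic and is paired with the distinct space $W_{q\delta}$. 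In the first case each block $s(\lambda)\vert_{W_{\delta_1}}$ is an honest symplectic isomorphism onto $W_{\delta_1 p}$, so $\mtau$, being assembled from such blocks and the sign $\gamma(\lambda)^{-1}$, is automatically symplectic. In the second case the constraint coming from $\omega$ instead relates the block on $W_{\delta_1}$ to the block on its partner $W_{q\delta_1}$ through the inverse-transpose duality that identifies $W_{q\delta_1}$ with $W_{\delta_1}^{*}$; choosing the orbit representatives compatibly with the involution $\delta\mapsto q\delta$, exactly as in the case analysis of Lemma~\ref{lemma-class-representative-triple-sp}, propagates this duality to the blocks of $\mtau$, so that after correcting by a suitable element of $Z(\Sp(2n,\C)^{\theta})$ if necessary (a central factor leaves the conjugation identity intact, since it is fixed by each $\theta_{\lambda}$) the matrix $\mtau$ is symplectic. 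Making this propagation precise, and keeping track of the scalar ambiguities, is the delicate part of the argument.

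Granting $\mtau\in\Sp(2n,\C)_{\theta}$, the conclusions follow as in Lemma~\ref{lemma-gltheta}. For the image I would note that any $g\in\Sp(2n,\C)_{\theta}$ conjugates $s(\delta)$ to $\cct(g)(\delta)\,s(\delta)$ and hence maps $W_{\delta}$ isomorphically onto $W_{\cct(g)\vert_{\Delta}\cdot\delta}$; this forces $\cct(g)\vert_{\Delta}\in s(\Delta)^*$ together with $\dim W_{\gamma\delta}=\dim W_{\delta}$, so $\cct(g)\in\sett$, while conversely the matrices $\mtau$ realise every element of $\sett$, making $\cct$ a bijection from $\{\mtau\}_{\gamma\in\sett}$ onto $\sett$. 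Finally I would put $\setp:=Z(\Sp(2n,\C)^{\theta})\{\mtau\}_{\gamma\in\sett}$ and check closure via an identity $\mtau\mtauu=\mttauu\,z$ with $z\in Z(\Sp(2n,\C)^{\theta})$, whose verification is the same block computation as in Lemma~\ref{lemma-gltheta} together with the fact that a product of symplectic matrices is symplectic; this shows $\setp$ is a subgroup and that $\cct\vert_{\setp}$ induces the asserted isomorphism $\setp/Z(\Sp(2n,\C)^{\theta})\cong\sett$.
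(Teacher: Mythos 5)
Your proposal has the same skeleton as the paper's proof: the same block formula for $\mtau$, the same observation that $\omega$ can only pair $W_{\delta}$ with $W_{q\delta}$, and the same definition $\setp:=Z(\Sp(2n,\C)^{\theta})\{\mtau\}_{\gamma\in\sett}$. But the step you yourself single out as the genuinely new difficulty is resolved by a mechanism that cannot work. You propose to repair a non-symplectic $\mtau$ by multiplying it with ``a suitable element of $Z(\Sp(2n,\C)^{\theta})$''. Every element of $Z(\Sp(2n,\C)^{\theta})$ lies in $\Sp(2n,\C)^{\theta}$ and hence preserves $\omega$, so multiplying $\mtau$ by such an element never changes the factor by which $\mtau$ scales $\omega$: a non-symplectic $\mtau$ stays non-symplectic. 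And there are cases where $\mtau$ genuinely fails to be symplectic. When $q\notin f(\Lambda/\Delta)$ and $\gamma=q$, the matrix $\mtau$ exchanges $W_{\delta'}$ and $W_{q\delta'}$ with mutually inverse blocks, and antisymmetry of $\omega$ forces it to multiply $\omega$ by $-1$; the repair used in the paper is the $\Delta$-matrix $D$ equal to $+1$ on each $W_{\delta'}$ and $-1$ on each $W_{q\delta'}$, which itself multiplies $\omega$ by $-1$ and so is \emph{not} in $\Sp(2n,\C)$ at all. When $q\in f(\Lambda/\Delta)$, the restriction of $\mtau$ to each $\bigoplus_{\delta'\in\delta f(\Lambda/\Delta)}W_{\delta'}\oplus W_{q\delta'}$ scales $\omega$ by a constant $k$, and the repair is the block scalar $k^{-1/2}$, again non-symplectic unless $k=1$. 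The correction factors must therefore be drawn from the strictly larger group $Z(\GL(2n,\C)^{\Int_s})$ (the centre of the fixed-point subgroup of $\GL(2n,\C)$, whose elements are merely constant on each $W_{\delta f(\Lambda/\Delta)}$), not from $Z(\Sp(2n,\C)^{\theta})$.

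This is not just a naming slip, because the distinction between the two centres is what makes the final step of the lemma nontrivial. Once the corrected matrices $\mtau$ differ from the $\GL$-versions by elements of $Z(\GL(2n,\C)^{\Int_s})$, the closure computation imported from Lemma \ref{lemma-gltheta} only gives $\mtau\mtauu=z\,\mttauu$ with $z\in Z(\GL(2n,\C)^{\Int_s})$, not $z\in Z(\Sp(2n,\C)^{\theta})$ as your last paragraph asserts. To conclude one needs three further observations: $z$ is symplectic because $\mtau$, $\mtauu$ and $\mttauu$ are; then $z\in Z(\GL(2n,\C)^{\Int_s})\cap\Sp(2n,\C)\subset\GL(2n,\C)^{\Int_s}\cap\Sp(2n,\C)=\Sp(2n,\C)^{\theta}$; and finally the adjoint action of $Z(\GL(2n,\C)^{\Int_s})$ on $\Sp(2n,\C)^{\theta}$ is trivial, so this symplectic element is in fact central in $\Sp(2n,\C)^{\theta}$. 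You invoke the first of these facts, but because you never leave $Z(\Sp(2n,\C)^{\theta})$ you have no occasion to build this bridge, and without it the existence of symplectic representatives $\mtau$ for every $\gamma\in\sett$, the closure of $\setp$, and the isomorphism $\setp/Z(\Sp(2n,\C)^{\theta})\cong\sett$ are all unproved.
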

\begin{proof}
Choose representatives $\delta\in s(\Delta)^*$ for each coset $ \delta f(\llambda/\Delta)\in s(\Delta)^*/f(\llambda/\Delta)$ and define
$$\mtau_{\delta'\gamma,\delta'}:=s(\lambda)_{\delta'\gamma,\delta\gamma}s(\lambda)_{\delta',\delta}^{-1}\gamma(\lambda)^{-1}=s(\lambda)_{\delta'\gamma,\delta\gamma}s(\lambda)_{\delta',\delta}^{-1}\gamma(\lambda)$$ 
as in the proof of Lemma \ref{lemma-gltheta}, where $\delta'\in \delta f(\llambda/\Delta)$ and $\lambda\in\llambda$ is such that $p(s(\lambda))=\delta'\delta^{-1}=\delta'\delta$. Recall that this definition is independent of the choice of $\lambda$. We start by showing that, for each $\gamma\in \sett$, the matrix $M^{\gamma}$ is equal to a symplectic matrix multiplied by an element of the centre  $Z(\GL(2n,\C)^{\Int_s})$ of the fixed point subgroup $\GL(2n,\C)^{\Int_s}$ of $\GL(2n,\C)$. Note that this symplectic matrix still satisfies (\ref{eq-mtau}). 

First suppose that the characteristic homomorphism $q$ is trivial. In this situation every weight space for the action of $s(\Delta)$ is symplectic with standard symplectic form. Thus $s(\lambda)_{\delta'\gamma,\delta\gamma}$ and $s(\lambda)_{\delta',\delta}$ are both symplectic. Moreover $\gamma(\lambda)=\pm 1$, which also preserves the symplectic form, so $\mtau\in\Sp(2n,\C)$. 

Now let $q$ be non-trivial. If $q\notin f(\llambda/\Delta)$ then we may assume that, if $\delta$ is the chosen representative for $\delta f(\llambda/\Delta)$, the element $q\delta\in q\delta f(\llambda/\Delta)$ also represents its class and so, on the one hand, for every $\delta'\in \delta f(\llambda/\Delta)$,
$$\mtau_{\delta'\gamma,\delta'}=s(\lambda)_{\delta'\gamma,\delta \gamma}s(\lambda)_{\delta',\delta}^{-1}\gamma(\lambda)$$
and, on the other,
$$\mtau_{\delta'q\gamma,\delta'q}=s(\lambda)_{\delta'q\gamma,\delta q\gamma}s(\lambda)_{\delta'q,\delta q}^{-1}\gamma(\lambda).$$
If $\gamma\ne q$ then, since $s(\lambda)_{\delta'\gamma,\delta \gamma}\oplus s(\lambda)_{\delta'q\gamma,\delta q\gamma}$ and $s(\lambda)_{\delta',\delta}\oplus s(\lambda)_{\delta'q,\delta q}$ preserve the standard symplectic form, the restriction of $\mtau$ to $W_{\delta'}\oplus W_{q\delta'}$ is symplectic for every $\delta'\in s(\Delta)^*$. 

If $q=\gamma$ then $\mtau_{\delta'\gamma,\delta'}$ and $\mtau_{\delta'q\gamma,\delta'q}$ are inverses of each other, in other words $\mtau$ exchanges $W_{\delta'}$ and $W_{q\delta'}$ with the restrictions being inverses of each other. By antisymmetry of the symplectic form the restriction to $W_{\delta'}\oplus W_{q\delta'}$ multiplies the symplectic form by $-1$. If we choose representatives $\delta'$ of each coset $\delta'\{1,q\}\subset\Delta^*$ in such a way that the action of $f(\llambda/\Delta)$ preserves representatives (recall that $q\notin f(\llambda/\Delta)$) then we may define a $\Delta$-matrix $D$ which is equal to 1 when restricted to $W_{\delta'}$ and $-1$ when restricted to $W_{\delta'q}$. It is clear that $D$ is constant on 
\begin{equation*}
    W_{\delta f(\llambda/\Delta)}:=\bigoplus_{\delta'\in\delta f(\llambda/\Delta)}W_{\delta'}
\end{equation*}
for every $\delta\in s(\Delta)^*$, hence it is in $Z(\GL(2n,\C)^{\Int_s})$. Moreover $D$ multiplies the symplectic matrix by $-1$, so $D\mtau$ is symplectic as required.

Now assume that $q\in f(\llambda/\Delta)$ and choose $\lambda_q\in \llambda$ such that $f(\lambda_q)=q$.
Let $d=\pm 1$ such that $s(\lambda\lambda_q)=ds(\lambda)s(\lambda_q)$. Then
\begin{align*}
\mtau_{\delta'q\gamma,\delta'q}&=s(\lambda\lambda_q)_{\delta'q\gamma,\delta\gamma}s(\lambda\lambda_q)_{\delta'q,\delta}^{-1}\gamma(\lambda\lambda_q)\\&=ds(\lambda)_{\delta'q\gamma,\delta q\gamma}s(\lambda_q)_{\delta q\gamma,\delta\gamma}d^{-1}s(\lambda)_{\delta'q,\delta q}^{-1}s(\lambda_q)_{\delta q,\delta}^{-1}\gamma(\lambda\lambda_q)\\&=
[s(\lambda_q)_{\delta q\gamma,\delta\gamma}s(\lambda_q)_{\delta q,\delta}^{-1}\gamma(\lambda_q)]s(\lambda)_{\delta'q\gamma,\delta q\gamma}s(\lambda)_{\delta'q,\delta q}^{-1}\gamma(\lambda),
\end{align*}
where the expression in square brackets only depends on $q,\gamma$ and the coset $\delta f(\llambda/\Delta)$. Since $s(\lambda)_{\delta'\gamma,\delta \gamma}\oplus s(\lambda)_{\delta'q\gamma,\delta q\gamma}$ and $s(\lambda)_{\delta',\delta }\oplus s(\lambda)_{\delta'q,\delta q}$ are symplectic, this shows that the restriction of $\mtau$ to $\bigoplus_{\delta'\in \delta f(\llambda/\Delta)}W_{\delta'}\oplus W_{q\delta'}$ multiplies the symplectic form by a constant, which implies that multiplying this restriction by a suitable complex number (namely $[s(\lambda_q)_{\delta q\gamma,\delta\gamma}s(\lambda_q)_{\delta q,\delta}^{-1}\gamma(\lambda_q)]^{-1/2}$) yields a symplectic transformation. In other words, $\mtau$ is equal to a symplectic matrix multiplied by a diagonal matrix which is constant on $W_{\delta f(\llambda/\Delta)}$ for every $\delta\in s(\Delta)^*$ yields a symplectic matrix. But such diagonal matrix is in $Z(\GL(2n,\C)^{\Int_s})$, as required.

Hereafter we rename the matrices $\mtau$ so that they are symplectic.

Note that, if $\gamma\in\llambda^*$ did not restrict to an element of $ s(\Delta)^*$, there would be no matrix $M$ satisfying $\Int_{\delta}M=\gamma(\delta)M$ for each $\delta\in\Delta$, since this implies that $M$ is a permutation matrix such that $p(M)=\gamma\vert_{\Delta}$. The automorphism $M$ would then send some non-zero weight space to a trivial weight space via an isomorphism, which is absurd. Similarly, given $\delta$ and $\gamma\in s(\Delta)^*,$ the weight spaces $W_{\delta}$ and $W_{\delta\gamma}$ must have the same dimension if there is a permutation invertible matrix $M$ with $p(M)=\gamma$, since the image of $W_{\delta}$ after applying the linear transformation $M$ is $W_{\delta\gamma}$. Therefore, the map
$$\{\mtau\}_{\gamma\in \sett}\xrightarrow{\cct}\gamt$$
is a bijection. 

In the proof of Lemma \ref{lemma-gltheta} we showed that, for every $\gamma$ and $\gamma'\in \sett$, $\mtau M^{\gamma'}=zM^{\gamma\gamma'}$ for some $z\in Z(\GL(2n,\C)^{\Int_s})$. The new definition of $\mtau$ only differs from the old one by an element of $Z(\GL(2n,\C)^{\Int_s})$, so this still holds. Moreover, since $\mtau$ is symplectic for every $\gamma\in \sett$, it follows that $z\in Z(\GL(2n,\C)^{\Int_s})\cap \Sp(2n,\C)$. But, on the one hand, $Z(\GL(2n,\C)^{\Int_s})\cap \Sp(2n,\C)\subset \GL(2n,\C)^{\Int_s}\cap \Sp(2n,\C)=\spt$. On the other, the adjoint action of $Z(\GL(2n,\C)^{\Int_s})$ on $\spt<\GL(2n,\C)^{\Int_s}$ is trivial. Thus $z\in Z(\spt)$ and so we may define $\setp:=Z(\spt)\{\mtau\}_{\gamma\in \sett}$.
\end{proof}

\begin{corollary}\label{cor-surjective-cct-sp}
The homomorphism $\cct:\sps\to\llambda^*$ is surjective if and only if $ s(\Delta)^*$ is identified with $\Delta^*$ via $s^*$ and all the weight spaces have the same dimension. In particular, under this assumption $s\vert_{\Delta}$ is injective and the order of $\ker q\le\Delta$ must divide $n$.
\end{corollary}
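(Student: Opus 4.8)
The plan is to reduce everything to the description of the image of $\cct$ already obtained in Lemma \ref{lemma-sptheta}, so that $\cct:\sps\to\Lambda^*$ is surjective exactly when $\sett=\Lambda^*$. Recall that $\sett$ is cut out by two conditions on $\gamma\in\Lambda^*$: that $\gamma\vert_\Delta$ lie in the group of weights $s(\Delta)^*\le\Delta^*$, and that $\dim W_{\gamma\delta}=\dim W_\delta$ for every weight $\delta$. The whole argument rests on the observation that the restriction homomorphism $\Lambda^*\to\Delta^*$ dual to the inclusion $\Delta\hookrightarrow\Lambda$ is surjective (a finite abelian group and its subgroup), so that as $\gamma$ ranges over $\Lambda^*$ its restriction $\gamma\vert_\Delta$ ranges over all of $\Delta^*$.

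First I would prove the stated equivalence. If $\sett=\Lambda^*$, then every $\gamma\vert_\Delta$ lies in $s(\Delta)^*$; by surjectivity of the restriction this forces $\Delta^*\subseteq s(\Delta)^*$, hence $s(\Delta)^*=\Delta^*$, which is exactly the statement that $s^*$ identifies $s(\Delta)^*$ with $\Delta^*$. Comparing cardinalities, $\lvert s(\Delta)\rvert=\lvert s(\Delta)^*\rvert=\lvert\Delta^*\rvert=\lvert\Delta\rvert$, so $s\vert_\Delta$ is injective. Moreover, for any two weights $\delta_1,\delta_2$ I would pick $\gamma\in\Lambda^*$ with $\gamma\vert_\Delta=\delta_2\delta_1^{-1}$ (possible by surjectivity) and read off $\dim W_{\delta_2}=\dim W_{\gamma\delta_1}=\dim W_{\delta_1}$ from membership $\gamma\in\sett$; hence all weight spaces share a common dimension $m$. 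Conversely, if $s(\Delta)^*=\Delta^*$ and all weight spaces have dimension $m$, then both defining conditions of $\sett$ hold for every $\gamma\in\Lambda^*$, so $\sett=\Lambda^*$. This also settles the injectivity of $s\vert_\Delta$ asserted in the ``in particular''.

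It remains to show $\lvert\ker q\rvert$ divides $n$, and this is where the symplectic structure (rather than mere dimension counting, as in Corollary \ref{cor-surjective-cct}) enters. The key computation is to pair weight spaces under $\omega$: since each $s(\delta)$ with $\delta\in\Delta$ is a diagonal symplectic matrix whose eigenvalue on $W_\mu$ equals $\mu(\delta)$ times a fixed scalar $\epsilon(\delta)$ with $\epsilon(\delta)^2=q(\delta)$, invariance $\omega(s(\delta)v,s(\delta)w)=\omega(v,w)$ gives $\mu(\delta)\nu(\delta)q(\delta)=1$ whenever $\omega$ pairs $W_\mu$ with $W_\nu$ nontrivially; thus $W_\mu$ pairs only with $W_{q\mu}$, and nondegeneracy of $\omega$ makes this a perfect pairing. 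Now write the total dimension as $2n=\sum_{\mu\in\Delta^*}\dim W_\mu=\lvert\Delta\rvert\,m$. If $q$ is nontrivial then $\mu\mapsto q\mu$ is a fixed-point-free involution of $\Delta^*$, so $\lvert\ker q\rvert=\lvert\Delta\rvert/2$ and $n=\lvert\ker q\rvert\,m$; if $q$ is trivial then each $W_\mu$ is a symplectic subspace, forcing $m$ to be even, and $\lvert\ker q\rvert=\lvert\Delta\rvert$ gives $n=\lvert\ker q\rvert\,(m/2)$. In either case $\lvert\ker q\rvert\mid n$.

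The main obstacle I anticipate is the bookkeeping in this last paragraph: getting the eigenvalue convention right (the redefinition making weights $\pm1$-valued while the genuine eigenvalues are fourth roots of unity) so that the scalar $\epsilon(\delta)$ with $\epsilon(\delta)^2=q(\delta)$ is correctly identified, and then cleanly separating the trivial and nontrivial cases for $q$ in the dimension count. Everything else is a formal consequence of Lemma \ref{lemma-sptheta} and the surjectivity of $\Lambda^*\to\Delta^*$.
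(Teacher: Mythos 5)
Your proposal is correct and follows essentially the same route as the paper: the equivalence is read off from Lemma \ref{lemma-sptheta} (you spell out the character-extension argument $\Lambda^*\twoheadrightarrow\Delta^*$ that the paper leaves implicit), and the divisibility $\lvert\ker q\rvert\mid n$ is obtained by the same case split on $q$ trivial versus non-trivial, with the same dimension count $2n=\lvert\Delta\rvert\,m$. Your explicit verification that $\omega$ pairs $W_\mu$ perfectly with $W_{q\mu}$ is a slightly more self-contained justification of the facts the paper quotes about isotropic versus symplectic weight spaces, but it is the same underlying idea.
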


\begin{proof}
The first statement follows from Lemma \ref{lemma-sptheta}. To show that the order of $\ker q$ divides $n$ we distinguish two cases: if $q$ is trivial then the restriction of the symplectic form to every weight space is the standard one, which implies that the intersection of each weight space with a copy of $\C^n$ in $\C^{2n}$ has half its dimension. Dividing $n$ by this dimension (which is equal for every weight space) gives the number of weights, which is equal to the order of $\Delta=\ker q$. If $q$ is non-trivial then $\ker q$ is a subgroup of $\Delta$ of degree 2. Since the quotient of $2n$ by the dimension of any weight space gives the number of weights, which is equal to $\vert\Delta\vert=2\vert\ker q\vert$, the order of $\ker q$ must divide $n$ as required.
\end{proof}

The quadruple $(l,\Delta,q,s)$ is called \textbf{admissible} if any of the two equivalent conditions in the statement of Corollary \ref{cor-surjective-cct-sp} are met. In particular this implies, by injectivity of $s\vert_{\Delta}$ and $f=ps$, that $\theta=\Int_s$ is injective.

\subsection{Admissible quadruples as components of the fixed point variety}

Consider the image $\widetilde{M}(X,\Sp(2n,\C)_{\theta})$ of the extension of structure group morphism
$$M(X,\Sp(2n,\C)_{\theta})\to M(X,\Sp(2n,\C))$$ 
given in Proposition \ref{prop-polystability-extension-structure-group}.

\begin{corollary}\label{cor-moduli-non-empty-sp}
If $M_*(X,\Sp(2n,\C))$ is the moduli space of stable $\Sp(2n,\C)$-bundles, the intersection $M_*(X,\Sp(2n,\C))^{\llambda}\cap \widetilde{M}(X,\Sp(2n,\C)_{\theta})$ is empty unless $(l,\Delta,q,s)$ is an admissible quadruple.
\end{corollary}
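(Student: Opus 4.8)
The plan is to follow the proof of Corollary \ref{cor-moduli-non-empty} essentially verbatim, since the only group-dependent ingredient---the precise condition under which $\cct$ is surjective---has already been isolated in Corollary \ref{cor-surjective-cct-sp}. First I would recall that, by Proposition \ref{prop-XGamma-connected}, when $\Lambda$ is regarded as an element of $H^1(X,\Hom(\Lambda,\C^*))\cong\Hom(\Lambda,H^1(X,\C^*))$ its monodromy homomorphism $\pi_1(X)\to\Lambda^*$ is surjective, equivalently the cover $\xg$ is connected with Galois group exactly $\Lambda^*$. This applies unchanged in the symplectic setting: the subgroup $\Lambda\subset H^1(X,\Z/2\Z)$ embeds in $H^1(X,\C^*)=J(X)$, and the identification $\Hom(\Lambda,\Z/2\Z)\cong\Hom(\Lambda,\C^*)$ does not affect the monodromy.

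Next I would invoke Theorem \ref{th-fixed-points-oscar-ramanan}, which is stated for an arbitrary connected reductive complex group and so applies to $\Sp(2n,\C)$. A simple stable fixed point lying in the component $\widetilde{M}(X,\Sp(2n,\C)_{\theta})$ must, by Propositions \ref{prop-reduction} and \ref{prop-simple-fixed-points}, admit a polystable reduction $F$ to $\Sp(2n,\C)_{\theta}$ satisfying the condition (\ref{eq-c-theta}), namely $\ctt(F)=\Lambda$. By the analysis of Section \ref{section-Gtheta}, the morphism $\ctt$ factors injectively through $H^1(X,\gamt)\hookrightarrow H^1(X,\Lambda^*)$ compatibly with the monodromy isomorphisms, where $\gamt=\Sp(2n,\C)_{\theta}/\Sp(2n,\C)^{\theta}$ sits inside $\Lambda^*$ via $\cct$. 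Hence a bundle $F$ with $\ctt(F)=\Lambda$ can exist only if the full monodromy $\Lambda^*$ of $\Lambda$ is contained in the subgroup $\gamt$; this forces $\gamt=\Lambda^*$, that is, $\cct\colon\Sp(2n,\C)_{\theta}\to\Lambda^*$ is surjective.

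I would then conclude by applying Corollary \ref{cor-surjective-cct-sp}: the surjectivity of $\cct$ is exactly one of the two equivalent conditions defining an admissible quadruple. Consequently, if $(l,\Delta,q,s)$ is not admissible then no reduction $F$ satisfying (\ref{eq-c-theta}) exists, the component $\widetilde{M}(X,\Sp(2n,\C)_{\theta})$ contains no simple stable fixed point, and in particular its intersection with $M_{ss}(X,\Sp(2n,\C))^{\Lambda}$ is empty.

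I do not expect a genuine obstacle here, as the statement is a formal consequence of results already in place; the only point deserving care is verifying that the two imported facts---the monodromy computation of Proposition \ref{prop-XGamma-connected} and the injective factorization of $\ctt$ through $H^1(X,\gamt)$ from Section \ref{section-Gtheta}---transfer without change to $G=\Sp(2n,\C)$, which they do since both are formulated for a general reductive group with $\Lambda\subset J(X)$.
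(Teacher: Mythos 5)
Your proposal is correct and follows essentially the same route as the paper's proof: both use Proposition \ref{prop-XGamma-connected} to identify the monodromy of $\Lambda$ with $\Lambda^*$, then invoke Theorem \ref{th-fixed-points-oscar-ramanan} (via the reduction condition $\ctt(F)=\Lambda$ and the injectivity of the factorization through $H^1(X,\gamt)$) to force surjectivity of $\cct$, and finally apply Corollary \ref{cor-surjective-cct-sp} to conclude admissibility. Your write-up merely makes explicit the factorization step that the paper leaves implicit.
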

\begin{proof}
The monodromy of $\llambda$ when considered as an element of $$\Hom(\llambda,H^1(X,\Z/2\Z))\cong\Hom(\llambda,H^1(X,\C^*))\cong H^1(X,\llambda^*),$$
where the last isomorphism follows from the Abel--Jacobi Theorem, is equal to $\llambda^*$ by Proposition \ref{prop-XGamma-connected}. Therefore, according to Theorem \ref{th-fixed-points-oscar-ramanan}, in order for the smooth fixed point locus $M_*(X,\Sp(2n,\C))^{\llambda}$ to be non-empty we need $\gamt$ to be isomorphic to $\llambda^*$ via the homomorphism $\cct:\Sp(2n,\C)_{\theta}\to\llambda^*$. Equivalently, $\cct$ must be surjective.
\end{proof}

\begin{lemma}\label{lemma-admissible-sp}
Given an antisymmetric pairing $l:\llambda\to\llambda^*$, a maximal isotropic subgroup $\Delta\le\llambda$ and an element $q\in\Delta^*$ such that the order of $\ker q$ divides $n$, there exists a map $s:\llambda\to\Sp(2n,\C)$ making $(l,\Delta,q,s)$ an admissible quadruple.
\end{lemma}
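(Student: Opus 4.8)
The plan is to follow the strategy of Lemma \ref{lemma-admissible}, first producing the restriction $s\vert_{\Delta}$ as a symplectic representation realizing all of $\Delta^*$ as weights, and then extending $s$ over a minimal generating set of $\Lambda/\Delta$ by symplectic permutation matrices, keeping track of the symplectic form at every stage. Throughout, admissibility is governed by Corollary \ref{cor-surjective-cct-sp}: it suffices to arrange that the set of weights $s(\Delta)^*$ is the whole of $\Delta^*$ and that all weight spaces $W_\delta$ have the same dimension $2n/\vert\Delta\vert$; the hypothesis that $\vert\ker q\vert$ divides $n$ is exactly what makes this dimension an integer (equal to $2n/\vert\ker q\vert$ when $q$ is trivial and to $n/\vert\ker q\vert$ when $q$ is nontrivial).

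First I would build $s\vert_{\Delta}$. Decompose $\C^{2n}=\bigoplus_{\chi\in\Delta^*}W_\chi$ into equal-dimensional weight spaces indexed by $\Delta^*$, and let $\delta\in\Delta$ act on $W_\chi$ by the scalar $\chi(\delta)$ when $q(\delta)=1$ and by $i\,\chi(\delta)$ when $q(\delta)=-1$. The requirement that $s(\delta)$ preserve the form then forces the symplectic pairing to match $W_\chi$ with $W_{q\chi}$. If $q$ is trivial this means each $W_\chi$ is a symplectic subspace orthogonal to the others and $\Delta$ acts by the involutions $\pm1$; if $q$ is nontrivial the weight spaces split into dual isotropic pairs $\{W_\chi,W_{q\chi}\}$, on which $\Delta$ acts with eigenvalues $\pm1$ along $\ker q$ and $\pm i$ along $q^{-1}(-1)$. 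In either case a direct computation shows $s(\delta)\in\Sp(2n,\C)$, that $s\vert_{\Delta}$ is a homomorphism up to sign (so $\Int_s\vert_{\Delta}$ is an honest homomorphism), that $s(\delta)^2=\pm1$ according to $q(\delta)$, and that the weights fill $\Delta^*$ with equal dimensions. This gives conditions (2) and (5) of Definition \ref{def-admissible-pair-representative-triple-sp} together with admissibility, and induces the trivial commutator pairing on $\Delta$, consistent with $\Delta$ being isotropic.

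Next I would extend $s$ inductively, exactly as in the proof of Lemma \ref{lemma-admissible}. Choosing a minimal set of generators $\bar\lambda_1,\dots,\bar\lambda_m$ of $\Lambda/\Delta$ (each of order two, since $\Lambda$ has exponent two) and assuming $s$ already defined on the preimage $\kappa^{-1}(H)$ of the subgroup generated by the first $k-1$ of them, I set $\gamma:=l(\lambda)\in H^{*}$ for a representative $\lambda$ of the next generator and take the symplectic permutation matrix $\mtau$ with scalar blocks and $p(\mtau)=\gamma\vert_{\Delta}$ furnished by the proof of Lemma \ref{lemma-sptheta}; by construction it satisfies the commutativity relation $s(h)\mtau s(h)^{-1}=\langle h,\lambda\rangle\,\mtau$ of the form of (\ref{eq-commutativity-relations}). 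Correcting $\mtau$ by a suitable symplectic, orbit-constant, block-scalar matrix $d$, I then put $s(\lambda):=\mtau d$ so that $s(\lambda)^2$ equals the prescribed central scalar $\pm1$ (recall $\lambda^2=1$), and extend multiplicatively by $s(h\lambda):=s(h)s(\lambda)$. Minimality of the generators makes this well defined, the elements of $s(\kappa^{-1}(H))$ commute up to $\C^{*}$, and combining (\ref{eq-antisymmetry}) with the commutativity relations shows that $\Int_s$ is a homomorphism whose associated pairing is $l$; this yields conditions (1), (3) and (4). Since the weight data are unchanged by the extension, admissibility persists and $\cct$ is surjective by Corollary \ref{cor-surjective-cct-sp}.

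The main obstacle is the symplectic choice of the normalizing correction $d$. In the $\GL$ setting of Lemma \ref{lemma-admissible} any block-scalar matrix constant on the cosets of $\Delta^*/f(\Lambda/\Delta)$ is admissible, but here $d$ must also preserve the symplectic form. When $q$ is nontrivial the weight spaces are isotropic and paired, so this imposes $d_{q\chi}=d_\chi^{-1}$ and forces one to extract symplectic square roots of the (skew-)symmetric blocks of $\mtau^2$; this is precisely the square-root construction used for the matrix $S'$ in the proof of Lemma \ref{lemma-class-representative-triple-sp}, which I would import here. The verification that such a symplectic $d$ can simultaneously normalize $s(\lambda)^2$ to $\pm1$ and respect the orbit structure is the only point requiring care beyond the $\GL$ argument.
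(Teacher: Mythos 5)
Your construction of $s\vert_{\Delta}$ is fine and essentially equivalent to the paper's (the paper takes $s'\oplus s'$ on $\ker q$ and the explicit block matrix $iI_n\oplus(-iI_n)$ for an element of $q^{-1}(-1)$), and your inductive skeleton over generators of $\Lambda/\Delta$ using the symplectic permutation matrices $\mtau$ of Lemma \ref{lemma-sptheta} is also the paper's. But there is a genuine gap at precisely the step you flag and postpone. Since every element of $\Lambda$ has order two, condition (1) of Definition \ref{def-admissible-pair-representative-triple-sp} forces $s(\lambda)^2\in Z(\Sp(2n,\C))=\{\pm 1\}$ for every $\lambda$. A priori the matrix $\mtau$ only satisfies $(\mtau)^2\in Z(\spt)$ (take $\gamma'=\gamma$ in the relation $\mtau M^{\gamma'}=zM^{\gamma\gamma'}$ from Lemma \ref{lemma-sptheta}), and for the symplectic group this centre is in general much larger than $\{\pm 1\}$: when $q$ is nontrivial it contains blocks of the form $c\oplus c^{-1}$ with $c\in\C^*$. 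Showing that one can in fact arrange $(\mtau)^2=\pm 1$ is the entire content of the lemma beyond the $\GL$ argument, and you do not prove it; you assert that a ``suitable symplectic, orbit-constant, block-scalar matrix $d$'' exists and explicitly defer the verification.

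Moreover, the mechanism you propose cannot carry this burden in general, so the deferral is not a routine omission. In the $\GL$ case one may rescale freely by block-scalar matrices, but a scalar matrix $cI$ is symplectic only for $c=\pm 1$; and when $q\in f(\Lambda/\Delta)$ is nontrivial, any orbit-constant block-scalar symplectic matrix must restrict to $\pm 1$ on each symplectic block $W_{\delta f(\Lambda/\Delta)}$ --- exactly the rigidity exploited in Remark \ref{remark-non-uniqueness-ints-sp}. Multiplying $\mtau$ by such a $d$ therefore changes $(\mtau d)^2$ only by signs on blocks and can never repair a square that fails to be block-scalar. What is actually needed --- and what the paper does --- is a direct computation that, after choosing the coset representatives of $\Delta^*/f(\Lambda'/\Delta)$ compatibly with multiplication by $\gamma$ (and the square roots compatibly when $q\in f(\Lambda'/\Delta)$), the symplectic $\mtau$ already satisfies $(\mtau)^2=\pm 1$; this splits into the cases $q$ trivial or $q\notin f(\Lambda'/\Delta)$ with $q\ne\gamma$, the case $q=\gamma$ (using that $\mtau$ anticommutes with the matrix $D$), and the case $q\in f(\Lambda'/\Delta)$ nontrivial. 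Without that computation, $\Int_s$ need not be a homomorphism of $\Lambda$ into $\Int(\Sp(2n,\C))$ and your induction does not close.
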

\begin{proof}
Let $\langle\cdot,\cdot\rangle$ be the pairing associated to $l$. Choose an înjective map $s':\ker q\hookrightarrow\GL(n,\C)$ whose image consists of diagonal matrices with eigenvalues $\pm 1$ and weight spaces of dimension $n/\ker q$. Take the composition 
$$s\vert_{\ker q}:\ker q\xrightarrow{s'\oplus s'}\GL(n,\C)\oplus\GL(n,\C)\hookrightarrow\GL(2n,\C).$$
If $q$ is non-trivial we then choose $\delta_q\in q^{-1}(-1)$, set 
$$s(\delta_q):=\begin{pmatrix}
iI_n & 0\\
0   & -iI_n
\end{pmatrix}
$$
and define $s\vert_{\Delta}$ so that $s$ preserves the group multiplication up to factors of $\pm 1$ (note that $\Delta=\ker q\sqcup\delta_q\ker q$).

Since every element of $\llambda$ has order two, we may find a set of generators $\lambda_1,\dots,\lambda_m$ identifying $\llambda$ with a product of $\Z/2\Z$'s and such that $\Delta$ is generated by the first $k$ of them. We prove the statement by induction on the number of generators of $\llambda/\Delta$, so assume that there is a map $s:\llambda':=\langle\lambda_1,\dots,\lambda_{m-1}\rangle\to\Sp(2n,\C)$ making $(l\vert_{\llambda'},\Delta,q,s)$ an admissible quadruple for some $m>k$. Let $\gamma:=l(\lambda_m)\vert_{\llambda'}$ and $\mtau\in\Sp(2n,\C)$ as defined in the proof of Lemma \ref{lemma-sptheta}. We claim that $(\mtau)^2=\pm 1$. Note that $\gamma$ cannot be in $l(\llambda')$, since $f:\llambda/\Delta\to\Delta^*$ is an injection. Hence we may choose the representative of each coset in $\Delta^*/f(\llambda'/\Delta)$ so that, if $\delta$ represents $\delta f(\llambda'/\Delta)$, so does $\gamma\delta$ in $\gamma\delta f(\llambda'/\Delta)$. Recall that if $q$ is trivial or $q\notin f(\llambda'/\Delta)$ then 
$$\mtau_{\delta'\gamma,\delta'}=s(\lambda)_{\delta'\gamma,\delta\gamma}s(\lambda)_{\delta',\delta}^{-1}\gamma(\lambda)\quad\text{or}\quad s(\lambda)_{\delta'\gamma,\delta\gamma}s(\lambda)_{\delta',\delta}^{-1}\gamma(\lambda)D_{\delta'\gamma,\delta'},$$
where $D$ is defined in the proof of Lemma \ref{lemma-sptheta}, depending on whether $q\ne\gamma$ or $q=\gamma$ respectively On the other hand, if $q$ is non-trivial and $q\in f(\llambda'/\Delta)$,
$$\mtau_{\delta'\gamma,\delta'}=[s(\lambda_q)_{\delta q\gamma,\delta\gamma}s(\lambda_q)_{\delta q,\delta}^{-1}\gamma(\lambda_q)]^{-1/2}s(\lambda)_{\delta'\gamma,\delta\gamma}s(\lambda)_{\delta',\delta}^{-1}\gamma(\lambda),$$
where $f(\lambda)=\delta'\delta$ and $f(\lambda_q\Delta)=q$. In the first case, if $q\ne\gamma$ then
$$(\mtau)^2_{\delta',\delta'}=\mtau_{\delta',\delta'\gamma}\mtau_{\delta'\gamma,\delta'}=s(\lambda)_{\delta',\delta}s(\lambda)_{\delta'\gamma,\delta\gamma}^{-1}\gamma(\lambda)s(\lambda)_{\delta'\gamma,\delta\gamma}s(\lambda)_{\delta',\delta}^{-1}\gamma(\lambda)=1.$$
If $q=\gamma$ then, since $\mtau$ and $D$ anticommute --- the restrictions of $D$ to $W_{\delta}$ and $W_{q\delta}$ differ by $-1$ for every $\delta\in\Delta^*$ and $\mtau=M^q$ permutes them --- and by the previous calculation,
$$1=(\mtau D)^2=-(\mtau)^2D^2=-(\mtau)^2.$$
In the second case, i.e. if $q\in f(\llambda/\Delta)$ is non-trivial,
\begin{align*}
  (\mtau)^2_{\delta',\delta'}=[s(\lambda_q)_{\delta q,\delta}s(\lambda_q)_{\delta q\gamma ,\delta\gamma}^{-1}\gamma(\lambda_q)]^{-1/2}s(\lambda)_{\delta',\delta}s(\lambda)_{\delta'\gamma,\delta\gamma}^{-1}\gamma(\lambda)\\
  [s(\lambda_q)_{\delta q\gamma,\delta\gamma}s(\lambda_q)_{\delta q,\delta}^{-1}\gamma(\lambda_q)]^{-1/2}s(\lambda)_{\delta'\gamma,\delta\gamma}s(\lambda)_{\delta',\delta}^{-1}\gamma(\lambda)=1,
\end{align*}
assuming that we have chosen the square roots in a compatible way.

Now it can be seen that the map $s:\llambda\to\Sp(2n,\C)$ which sends $\lambda \lambda_m^k$ to $s(\lambda)(\mtau)^k$ for each $\lambda\in\llambda'$ induces a homomorphism $\Int_s$: indeed, $(s(\lambda)\mtau)^2$ is equal to $\pm 1$ because the square of each factor is $\pm 1$ and they commute up to multiplication by $\pm 1$. Moreover, $s$ yields an admissible quadruple: for example, the antisymmetry of $l$ and the construction of $s$ imply that $l$ is induced by $s$.
\end{proof}

\begin{remark}\label{remark-non-uniqueness-ints-sp}
Unlike the case of $\GL(n,\C)$ (see Lemma \ref{lemma-admissible}), given the data $l,\Delta$ and $q$, the class of $\theta=\Int_s$ in $\x(\llambda,\Int(\Sp(2n,\C)))$ is not unique. Indeed, on each inductive step in the proof of Lemma \ref{lemma-admissible-sp}  a choice: having already defined $s\vert_{\lambda'}$ and letting $\gamma:=l(\lambda_m)\vert_{\llambda'}$, let us think about the possible choices for a permutation matrix in $\Sp(2n,\C)$ with constant blocks satisfying (\ref{eq-commutativity-relations}) and having square $\pm 1$ (any such a matrix would extend $s$ to $\llambda$ by setting $s(\lambda_m):=\mtau$, in such a way that $(l,\Delta,q,s)$ is an admissible quadruple). We know by Lemma \ref{lemma-admissible-sp} that this matrix exists, namely the matrix $\mtau$ defined in the proof of Lemma \ref{lemma-sptheta}. Another symplectic matrix satisfying the requirements is equal to $\mtau C$, where $C$ is a symplectic $\Delta$-matrix which is equal to a constant in $\C^*$ when restricted to $W_{\delta f(\llambda/\Delta)}:=\bigoplus_{\delta'\in\delta f(\llambda/\Delta)}W_{\delta'}$ for every $\delta\in\Delta^*$. We split the argument into three different cases.

If $q\in f(\llambda/\Delta)$ then, according to the proof of Lemma \ref{lemma-admissible-sp} we may choose $\mtau$ to have order 2. Moreover, $C$ is symplectic if and only if its restriction to $W_{\delta f(\llambda/\Delta)}$ is equal to $\pm 1$ for every $\delta\in\Delta^*$, since $W_{\delta f(\llambda/\Delta)}$ is symplectic. The condition $(C\mtau)^2=\pm 1$ implies that the restriction of $C$ to $W_{\delta f(\llambda/\Delta)}$ is equal to $\pm 1$ multiplied by its restriction to $W_{\gamma\delta f(\llambda/\Delta)}$. A representative of the $+1$ case is just $C=1$, whereas we denote a representative for the $-1$ case by $C^-$. Each sign yields a different class in $\x(\llambda,\Int(\Sp(2n,\C)))$: the matrices $\mtau$ and $\mtau C^-$ have different order (2 and 4 respectively), since $(\mtau C^-)^2=-(\mtau)^{2}(C^-)^2=-1$. 
Now, if $\Int_{\mtau}$ and $\Int_{\mtau C^-}$ where conjugate in $\Int(\Sp(2n,\C))$ then $\mtau$ would be conjugate to $\pm \mtau C^-$, which has different order, so this is impossible. Hence there are two different choices for the class of $\Int_s$ completely determined by the order of $s(\lambda_m)$.

If $q\notin f(\llambda'/\Delta)$ and $q\ne\gamma$ then, for every $\delta\in\Delta^*$, the subspaces $W_{\delta\llambda'/\Delta}$ and $W_{\delta q\llambda'/\Delta}$ are different. The automorphism $C\vert_{W_{\delta\gamma\llambda'/\Delta}}$ may differ from $C\vert_{W_{\delta\llambda'/\Delta}}$ by a factor, say $k$, but then the fact that $C$ is symplectic implies that $C\vert_{W_{\delta\gamma q\llambda'/\Delta}}=k^{-1}C\vert_{W_{\delta q\llambda'/\Delta}}$. However, conjugation by the matrix
\begin{equation*}
    \begin{pmatrix}
        I_{2n/\vert\Delta\vert} & 0 & 0 & 0 \\
        0 & k^{-1/2}I_{2n/\vert\Delta\vert} & 0 & 0 \\
        0 & 0 & I_{2n/\vert\Delta\vert} & 0 \\
        0 & 0 & 0 & k^{1/2}I_{2n/\vert\Delta\vert}
    \end{pmatrix},
\end{equation*}
which is defined on $W_{\delta\llambda'/\Delta}\oplus W_{\delta\gamma\llambda'/\Delta}\oplus W_{\delta q\llambda'/\Delta}\oplus W_{\delta\gamma q\llambda'/\Delta}$ (note that this preserves the standard symplectic form), reduces the possible cases to $k=1$. Then $C$ is of the form $cI_{2n/\vert\Delta\vert}\oplus cI_{2n/\vert\Delta\vert}\oplus c^{-1}I_{2n/\vert\Delta\vert}\oplus c^{-1}I_{2n/\vert\Delta\vert}$ on $W_{\delta\llambda'/\Delta}\oplus W_{\delta\gamma\llambda'/\Delta}\oplus W_{\delta q\llambda'/\Delta}\oplus W_{\delta\gamma q\llambda'/\Delta}$, and so $\mtau C$ is conjugate to $\mtau$ via the matrix
\begin{equation*}
    \begin{pmatrix}
        c^{-1/2}I_{2n/\vert\Delta\vert} & 0 & 0 & 0 \\
        0 & c^{-1/2}I_{2n/\vert\Delta\vert}& 0 & 0 \\
        0 & 0 & c^{1/2}I_{2n/\vert\Delta\vert} & 0 \\
        0 & 0 & 0 & c^{1/2}I_{2n/\vert\Delta\vert} \\
    \end{pmatrix},
\end{equation*}
so there is only one possible choice for the class of $\Int_s$ in this case.

Finally, if $q=\gamma$, the matrix $C$ is symplectic if and only if its restriction to $W_{\delta\llambda'/\Delta}\oplus W_{\delta q\llambda'/\Delta}$ is equal to $c\oplus c^{-1}$ for some $c\in\C^*$. Then $\mtau C$ is conjugate to $\mtau$ via the matrix
\begin{equation*}
    \begin{pmatrix}
        c^{-1/2}I_{2n/\vert\Delta\vert} & 0 \\
        0 & c^{1/2}I_{2n/\vert\Delta\vert} \\
    \end{pmatrix},
\end{equation*}
so again there is only one possible choice for the class of $\Int_s$.
\end{remark}

\subsection{Fixed points as pushforwards}

Let $(l,\Delta,q,s)$ be an admissible quadruple. As we saw in section \ref{section-jacobian}, the group $\gltt$ consists of $\Delta$-matrices $M$ such that $M_{\delta}=M_{\delta'}$ whenever $\delta\delta'\in f(\llambda/\Delta)$. There is a similar description for $\spt=\gltt\cap\Sp(2n,\C)$.
Let $\setp:=Z(\spt)\{\mtau\}_{\gamma\in \sett}<\sps$ as in the proof of Lemma \ref{lemma-sptheta}. According to Lemma \ref{lemma-sptheta} the group $\sps$ is generated by $\spt$ and $\setp$ and the commutative diagramme (\ref{eq-commutative-diagramme-gls-glt}) restricts to
$$
    \begin{tikzcd}
\setp\arrow[r,"\Int"]\arrow[d,"\cct"]&\Aut(\spt)\\
\llambda^*\arrow[ru,"\tau"]\arrow[u,bend left,"t"]
\end{tikzcd},
$$
where $t$ is a section of $\cct$. Let $c\in Z^2_{\tau}(\llambda^*,Z(\Sp(2n,\C)^{\theta}))$ be the 2-cocycle determined by $t$ as in the proof of Proposition \ref{prop-extensions-isomorphic-twisted-group}, so that
\begin{equation*}
    \Sp(2n,\C)_{\theta}\cong \Sp(2n,\C)^{\theta}\times_{\tau,c}\llambda^*.
\end{equation*}

Let $\pg:\xg\to X$ be the (connected by Proposition \ref{prop-XGamma-connected}) étale cover determined by the $\llambda^*$-bundle $\llambda$, and let $\pd:\xd\to X$ be the étale cover corresponding to $\Delta<\llambda$. According to Proposition \ref{prop-fixed-points-as-pushforwards}, $(\tau,c)$-twisted $\llambda^*$-equivariant $\spt$-bundles may be described in terms of vector bundles $W$ of rank $2n/\vert\Delta\vert$ over $\xd$ satisfying (\ref{eq-W-l(Gamma)}) and equipped with some extra structure corresponding to the symplectic form. There are two possibilities depending on $q$:
\begin{enumerate}
    \item If $q$ is trivial then we obtain symplectic vector bundles $(W,\psi)$ of rank $2n/\vert\Delta\vert$ over $\xd$ equipped with isomorphisms
    \begin{equation}\label{eq-W-l(Gamma)-sp}
        (W,\psi)\cong l(\lambda)^*(W\otimes\pd^*\lambda,\psi)
    \end{equation}
 (\ref{eq-W-l(Gamma)}) for each $\lambda\in\llambda$. Here $\psi$ denotes the symplectic form on $W$, which may be regarded as an isomorphism $\psi:W\cong W^*$ such that, for every $w$ and $w'\in W$, 
    \begin{equation*}
        \psi(w)(w')=-\psi(w')(w)=-\psi^*(w)(w'),
    \end{equation*}
    i.e. $\psi^*=-\psi$.
    \item\label{item-q-sp-2} If $q$ is not trivial then, because of the description of $\spt$, we obtain a symplectic form on $W\oplus q^*W$, where by abuse of notation $q\in\llambda^*$ is an extension of $q$ and $W$ is a vector bundle of rank $2n/\vert\Delta\vert$ with a set of isomorphisms (\ref{eq-W-l(Gamma)-sp}). The symplectic form $\omega$ is codified by an isomorphism $\psi:W\xrightarrow{\sim}q^*W^*$. The restriction of $\omega$ to $W$ is equal to $\psi$, whereas the restriction to $q^*W$ is equal to $q^*\psi$. 
    The antisymmetry of $\omega$ is equivalent to $q^*\psi^*=-\psi$ since, for every $w\in W$ and $w'\in q^*W$,
    \begin{equation*}
        q^*\psi^*(w)(w')=q^*\psi(w')(w)=-\psi(w)(w').
    \end{equation*}
\end{enumerate}


For each antisymmetric pairing $l$, choose a maximal isotropic subgroup $\Delta\le\llambda$.

\begin{definition}\label{def-moduli-vector-bundles-xd-sp}
We denote by $M(\xd,\GL(2n/\vert\Delta\vert,\C),q)$ the moduli space of pairs $(E,\psi)$ consisting of a vector bundle $E$ of rank $2n/\vert\Delta\vert$ and an isomorphism $\psi:E\xrightarrow{\sim} q^*E$ such that $q^*\psi^*=-\psi$ --- this may be constructed using the techniques in \cite{schmitt}. We define $M(\xd,\GL(2n/\vert\Delta\vert,\C),q)^{l(\llambda)}$ to be the subvariety parametrizing pairs $(E,\psi)$ such that $l(\lambda)^*(E\otimes\pd^*\lambda,\psi)\cong(E,\psi)$ for each $\lambda\in\llambda$ --- here $l(\lambda)^*\psi$ is just the pullback of the homomorphism $\psi$. The pushforward of vector bundles induces a morphism 
\begin{equation*}
    p_{\Delta^*}: M(\xd,\GL(2n/\vert\Delta\vert,\C),q)^{l(\llambda)}\to M(X,\Sp(2n,\C)).
\end{equation*}
\end{definition}

Indeed, given an element $(E,\psi)\in M(\xd,\GL(2n/\vert\Delta\vert,\C),q)$, the pushforward 
$$p_{\Delta^*}\psi:p_{\Delta^*}E\to p_{\Delta^*}q^*E\cong p_{\Delta^*}E$$
may be regarded as a symplectic form on $p_{\Delta^*}E$. Note that we are calling $l(\lambda)$ to its restriction to $\Delta$ by abuse of notation.

By an argument analogous to the proof of \ref{prop-fixed-points-as-pushforwards} which takes into account (1) and (2) above, we obtain the following.

\begin{proposition}\label{prop-fixed-points-as-pushforwards-sp}
    If $\wcM(\xg,\Sp(2n,\C)^{\theta},\llambda^*,\tau,c)$ is the image of $M(\xg,\Sp(2n,\C)^{\theta},\llambda^*,\tau,c)$ in $M(X,\Sp(2n,\C))$ by the morphism (\ref{eq-narasimhan-ramanan-gs}),
    \begin{equation*}
        \wcM(\xg,\Sp(2n,\C)^{\theta},\llambda^*,\tau,c)=p_{\Delta*}M(\xd,\GL(2n/\vert\Delta\vert,\C),q)^{l(\llambda)}.
    \end{equation*}
\end{proposition}

We can now state the main result of Section \ref{section-example-sp-finite}.

\begin{theorem}\label{th-finite-group-jacobian-sp}
The inclusions
$$\bigcup_{l,q}p_{\Delta*}\mdl(\xd,\GL(2n/\vert\Delta\vert,\C),q)^{l(\llambda)}\subset\mdl(X,\Sp(2n,\C))^{\llambda}$$
and
$$\mdl_*(X,\Sp(2n,\C))^{\llambda}\subset\bigcup_{l,q}p_{\Delta*}\mdl(\xd,\GL(2n/\vert\Delta\vert,\C),q)^{l(\llambda)}$$ hold,
where $\mdl(\xd,\GL(2n/\vert\Delta\vert,\C),q)^{l(\llambda)}$ is given by Definition \ref{def-moduli-vector-bundles-xd-sp}. The parameter $l$ runs over all antisymmetric pairings on $\llambda$ such that the order of a maximal isotropic subgroup $\Delta$ divides $n$. The choice of $\Delta$ is fixed for each $l$. The parameter $q$ runs over elements of $\Delta^*$.
\end{theorem}

\begin{proof}
Follows from Lemmas \ref{lemma-class-representative-triple-sp}, \ref{lemma-admissible-sp}, Proposition \ref{prop-fixed-points-as-pushforwards-sp}, Theorems \ref{th-fixed-points-oscar-ramanan} and \ref{th-prym-narasimhan-ramanan} and Corollary \ref{cor-moduli-non-empty-sp}. 
\end{proof}

\subsection{An abelianization phenomenon}\label{section-abelianization-sp}
In particular, if $2n =2^m$ is some power of $2$ and the genus of $X$ is greater than $m/2$, there are subgroups $\llambda\le H^1(X,\Z/2\Z)$ of order $2^{m}$. If $l$ is the trivial pairing then $p_{\Delta*}\mdl(\xd,\C^*,q)^{l(\llambda)}$ is a component of the fixed point locus, where $q$ is any non-trivial element of $\llambda^*$. In this situation $\mdl(\xd,\C^*,q)^{l(\llambda)}$ is just the moduli space of isomorphism classes of pairs $(L,\psi)$, where $L$ is a line bundle over $\xg$ and $\psi:q^*L\xrightarrow{\sim}L^*$ is an isomorphism satisfying $q^*\psi=-\psi^*$, and the image $p_{\Delta*}(L,\psi)$ is just the pushforward of $L$ equipped with the induced symplectic form. This ``abelianization phenomenon" is a manifestation, with the extra structure of the symplectic form, of the corresponding description of a component in the fixed point locus of $M(X,\GL(2n,\C))$ under the action of a subgroup of order $2n$ in $J(X)$.

\providecommand{\bysame}{\leavevmode\hbox to3em{\hrulefill}\thinspace}


\begin{thebibliography}{99}

\bibitem{yo-moduli}
G. Barajas, 'Moduli spaces of twisted equivariant $G$-bundles over a curve', Preprint, 2025, arXiv:2505.23488 [math.AG].

\bibitem{BGGM}
G. Barajas, O. Garc\'ia-Prada, P. B. Gothen and I. Mundet i Riera, 
'Non-connected Lie groups, twisted equivariant bundles and coverings', {\em Geom. Dedicata} {\bf 217}, 27 (2023) (41 pages).

\bibitem{oscar-barajas-higgs}
G. Barajas, S. Basu and O. Garc\'ia-Prada, 
'Finite group actions on Higgs bundle moduli spaces', in preparation.  

\bibitem{oscar-suratno}
S. Basu and O. García-Prada, 
'Finite group actions on Higgs bundle moduli spaces and twisted equivariant actions', Preprint, 2020, arXiv:2011.04017v1 [math.AG].

\bibitem{donaldson} S. K. Donaldson, A new proof of a theorem of Narasimhan and Seshadri, {\em J. Differential Geom.} {\bf 18(2)} (1983) 269--277.

\bibitem{mumford}
J. Fogarty and D. Mumford, \emph{Geometric invariant theory}, A Series of Modern Surveys in Mathematics 34 (Springer-Verlag, New York, 1982).

\bibitem{franco-branes}
E. Franco, P. B. Gothen, A. Oliveira and A. Pe\'on-Nieto,
`Unramified covers and branes on the Hitchin system', {\em Adv. Math.} {\bf 377} (2021), 107493.

\bibitem{oscar-ignasi-gothen}
O. Garc\'ia-Prada, P. B. Gothen and I. Mundet i Riera, 
'Higgs pairs, twisted equivariant structures and non-connected groups', in preparation.

\bibitem{oscar-oliveira} 
O. Garc\'ia-Prada and A. Oliveira,
'Connectedness of Higgs bundle moduli for reductive complex Lie groups',
{\em Asian J. Math.}, \textbf{21} (2017), 791--810. 

\bibitem{PR}
O.~Garc{\'\i}a-Prada and S. Ramanan,
'Involutions and higher order automorphisms of Higgs bundle moduli spaces',
\textsl{Proc. London Math. Soc.} \textbf{119} (2019), 681--732.

\bibitem{gukov-witten}
S. Gukov and E. Witten,
`Branes and quantization',
\textsl{Adv. Theor. Math. Phys.} \textbf{13} (2009), 1445--1518.

\bibitem{hausel-thaddeus}
T. Hausel and M. Thaddeus, \emph{Mirror symmetry, Langlands duality, and the Hitchin system},
\textsl{Invent. Math.} \textbf{153} (2003), 197--229.

\bibitem{helgason}
S.~Helgason, \emph{Differential geometry, {L}ie groups, and symmetric spaces},
  Graduate Studies in Mathematics 80 (Academic Press, San Diego, CA, 1998).

\bibitem{kapustin-witten} A. Kapustin and E. Witten, `Electric magnetic duality
and the geometric Langlands programme', \textsl{Commun.  Number Theory Phys.}
{\bf 1} (2007), 1--236. 
  
\bibitem{lawson_spin_1989} H. B. Lawson and M. Michelsohn,
\textsl{Spin geometry}, Princeton mathematical series 38 (Princeton University Press, Princeton, 1989).
  
\bibitem{representations-finitely-generated-groups} A. Lubotzky and A. R. Magid,
\emph{Varieties of representations of finitely generated groups}, Memoirs of the American Mathematical Society 58 (Am. Math. Soc., Rhode Island, 1985).
  
\bibitem{mumford-prym}
D. Mumford,
\textsl{Prym varieties I}, Contributions to Analysis, a collection of papers dedicated to Lipman Bers
(Academic Press, 1974, pp. 325--250)

\bibitem{narasimhan-ramanan}
M.~S. Narasimhan and S. Ramanan, 'Generalised Prym varieties as fixed points', 
\textsl{J. Indian Math. Soc.} \textbf{39} (1975), 1--19..
  
\bibitem{narasimhan-seshadri}
M.~S. Narasimhan and C. S. Seshadri, 'Stable and Unitary Vector Bundles on a Compact Riemann Surface', \textsl{Math. Ann.} \textbf{82} (1965), 540--567.

\bibitem{nasser}
F. Nasser,
'Torsion subgroups of Jacobians acting on moduli spaces of vector bundles', PhD Thesis,
University of Aarhus, 2005.

\bibitem{reppen}
{L. Olsson, S. Reppen and T. Tajakka}, 'Moduli of $\mathcal G$-bundles under nonconnected group schemes and nondensity of essentially finite bundles', Preprint, 2023, arXiv:2311.05326v1 [math.AG].

\bibitem{onishchik3} A.L. Onishchik and E.B. Vinberg (Eds.),
\textsl{Lie Groups and  Lie
Algebras III}, Encyclopedia of Mathematical Sciences 41 (Springer-Verlag, New York, 1994).

\bibitem{ramanathan-narasimhan-seshadri}
{A. Ramanathan}, 'Stable principal bundles on a compact
Riemann surface', \textsl{Math. Ann.} \textbf{213} (1975), 129--152.

\bibitem{ramanathan-moduli-I}
{A. Ramanathan}, 'Moduli for principal bundles over algebraic
curves: I', \textsl{Proc. Indian Acad. Sci. (Math. Sci.)} \textbf{106}
(1996), 301-328.

\bibitem{ramanathan-moduli-II}
{A. Ramanathan}, 'Moduli for principal bundles over algebraic
curves: II', \textsl{Proc. Indian Acad. Sci. (Math. Sci.)} \textbf{106}
(1996), 421--449.

\bibitem{schmitt} A. H. W. Schmitt,
\textsl{Geometric invariant theory and decorated principal bundles}, Zurich lectures in advanced mathematics,
Eur. Math. Soc., Z\"urich, 2008.


\bibitem{steinberg-endomorphisms-of-linear-algebraic-groups} R. Steinberg,
\textsl{Endomorphisms of linear algebraic groups}, Mem. Am. Math. Soc. 80 (Trans. Am. Math. Soc., Rhode Island, 1968).

\bibitem{exceptional}
I. Yokota,
'Exceptional Lie groups',
Preprint, 2009, arXiv:0902.0431v1 [math.DG].




\end{thebibliography}
\end{document}